\DeclareMathOperator*{\esssup}{ess\,sup} 
\DeclareMathOperator*{\essinf}{ess\,inf}
\newcommand{\R}{\mathbb R}
\renewcommand{\epsilon}{\varepsilon}
\newcommand{\eps}{\epsilon}
\newcommand{\bra}[1]{\left(#1\right)}
\newcommand{\sbra}[1]{\left[#1\right]}
\newcommand{\na}{\nabla}
\newcommand{\LO}[1]{L^{#1}(\Omega)}
\newcommand{\LQ}[1]{L^{#1}(\Omega_T )}
\newcommand{\dint}[1]{\iint_{\Omega_T}}
\newcommand{\dinttau}[1]{\iint_{\Omega_\tau}}
\newcommand{\dintt}[1]{\iint_{\Omega_t}}
\newcommand{\pa}{\partial}
\newcommand{\Rhat}{\widehat{R}}
\newcommand{\Shat}{\widehat{S}}
\newtheorem{theorem}{{\bf Theorem}}[section]
\newtheorem{definition}[theorem]{Definition}
\theoremstyle{plain} 
\newtheorem{lemma}{Lemma} 
\newtheorem{proposition}{Proposition}[section]
\newtheorem{assumption}{Assumption}[section]
\newtheorem{remark}{Remark}[section]
\title{\bf \Large  
Singular limit and convergence rate via projection method in a model for plant-growth dynamics with autotoxicity
}
\author{Jeff Morgan$^{a}$\footnote{jmorgan@math.uh.edu, jjmorgan@uh.edu} , Cinzia Soresina$^{b}$\footnote{ cinzia.soresina@unitn.it} , Bao Quoc Tang$^{c}$\footnote{quoc.tang@uni-graz.at, baotangquoc@gmail.com} ,  Bao-Ngoc Tran$^{c,d}$\footnote{bao-ngoc.tran@uni-graz.at}}
\date{}
\begin{document}

\maketitle

\vspace*{-1cm}

\begin{center}
{\small
$^{a}$Department of Mathematics,
	University of Houston, Houston, Texas 77204, USA \vspace{0.2cm}\\ 
$^{b}$Department of Mathematics, University of Trento, Via Sommarive,  14, 38123 Povo TN, Italy \vspace{0.2cm}\\ 
$^{c}$Department of Mathematics and Scientific Computing, University of Graz,  Heinrichstrasse 36, 8010 Graz, Austria 
\\
$^{d}$Department of Mathematics, Faculty of Science, Nong Lam University,  Ho Chi Minh City, Vietnam
}
\end{center}

\begin{abstract}
We investigate a fast-reaction--diffusion system modelling the effect of autotoxicity on plant-growth dynamics, in which the fast-reaction terms are based on the dichotomy between healthy and exposed roots depending on the toxicity. The model was proposed in [Giannino, Iuorio, Soresina, forthcoming] to account for stable stationary spacial patterns considering only biomass and toxicity, and its fast-reaction (cross-diffusion) limit was formally derived and numerically investigated.
In this paper, the cross-diffusion limiting system is rigorously obtained as the fast-reaction limit of the reaction-diffusion system with fast-reaction terms by performing a bootstrap argument involving energies. Then, a thorough well-posedness analysis of the cross-diffusion system is presented, including a $L^\infty_{x,t}$-bound, uniqueness, stability, and regularity of weak solutions. This analysis, in turn, becomes crucial to establish the convergence rate for the fast reaction limit, thanks to the key idea of using an inverse Neumann Laplacian operator. Finally, numerical experiments illustrate the analytical findings on the convergence rate.  

\vspace*{0.5cm}

\noindent \textbf{Keywords}: Autotoxicity, Plant growth dynamics, Fast reaction limits, Cross diffusion system, Convergence rate, Bootstrap argument. 
\end{abstract}

  
\section{Introduction}
Singular limits, also known as \textit{fast-reaction limits}, early discussed by Evans \cite{evans1980convergence} and Martin \cite{martin1980mathematical}, are essential in reducing a system of differential equations with fast-reaction terms (i.e., multiple time scales) to another with fewer equations and fewer time-scales, often called \emph{limiting system}. In this context, several studies and applications focused on reaction-diffusion systems.

\medskip
 
Interestingly, the mathematical structure of the limiting system can be different from the fast-reaction system. For instance, Stefan moving boundary problems have been derived by Bothe--Pierre \cite{bothe2012instantaneous} and Murakawa--Ninomiya  \cite{murakawa2011fast}, Stefan moving interface problems by Iida-Monobe--Murakawa--Ninomiya \cite{iida2017vanishing}, systems with solutions expressing in terms of Young measures by Perthame--Skrzeczkowski \cite{perthame2022fast}. In Tang--Tran \cite{tang2023rigorous}, the Michaelis--Menten kinetics for the enzymatic reaction is derived rigorously, where the authors showed that not only the number of equations has been reduced but also the kinetics of the system. In other cases, fast-reaction limits lead to cross-diffusion systems, as shown by, for instance, Iida--Mimura--Ninomiya \cite{iida2006diffusion} for the well-known SKT model, Bothe--Pierre--Rolland \cite{bothe2012cross}, Conforto--Desvillettes--Soresina \cite{conforto2018reaction, desvillettes2019non} for predator--prey models, Elia\v{s}--Izuhara--Mimura--Tang \cite{eliavs2022aggregation}, Brocchieri--Corrias--Dietert--Kim \cite{brocchieri2021evolution}, to name a few. Not only number and/or the mathematical structure of the equation changes into the limit, but also the bifurcation structure might be different, as shown by Kuehn--Soresina in \cite{kuehn2020numerical} for SKT model. We refer the reader to Daus--Desvillettes--J\"ungel \cite{daus2020cross, conforto2014rigorous, desvillettes2015new} and Crooks--Du \cite{crooks2022fast} for fast reaction limits with nonlinear diffusion.

\subsection{Problem formulation} 
\label{ProblemFormulation} 
 
In this paper, we focus on the fast-reaction system and its cross-diffusion limit modelling autotoxicity effects in plant-growth dynamics, proposed by Giannino--Iuorio--Soresina \cite{Giannino2024the} to account for stable stationary spatial patterns without water as limiting resource. 

\medskip

Vegetation dynamics, pattern formation and tipping points are crucial 
for assessing environmental health in a climate change scenario. Plant--soil negative feedback gained much attention, see for instance \cite{reynolds1994dynamics,mangan2010negative} and references therein. One important type of feedback is the inducing of autotoxic compounds from decaying plant matter \cite{marasco2014vegetation}, which was not addressed in classical models. By extending the Klausmeier model \cite{klausmeier1999regular,carter2018traveling} with toxicity, pattern formation (or inhomogeneous vegetation structures) \cite{iuorio2021influence} and travelling pulses \cite{carter2024travelling,iuorio2023analysis} were investigated, focusing on the influence of autotoxicity on biomass survival in a specific environment.

\medskip

Recently, in \cite{Giannino2024the}, a new cross-diffusion model for plant-growth dynamics in the presence of autotoxicity effects has been proposed. The Turing instability analysis showed that the cross-diffusion term in the limiting system is the key ingredient to the appearance of spatial patterns, which is not possible with only standard diffusion and without considering water a variable. In particular, the cross-diffusion system is obtained as the fast-reaction limit of a reaction-diffusion systems with \textit{fast-dichotomy}, namely plant (roots) biomass exists in two possible states, namely healthy roots and exposed roots (to toxicity), and it is supposed that the switch between these states occurs on a much faster time scale than their growth/death and decomposition. 

\medskip

This approach has already been exploited in other contexts, such as, for instance, spatial segregation of competing species \cite{iida2006diffusion}, 
predator-prey models \cite{conforto2018reaction,iida2023cross},
Neolithic spread of farmers in Europe \cite{eliavs2021singular}, cockroaches aggregation \cite{eliavs2022aggregation}, and dietary diversity and starvation \cite{brocchieri2021evolution}. This work provides a rigorous analysis of a model of this type.

 \medskip

Let $\Omega$ be a bounded domain in $\mathbb{R}^2$ with sufficiently smooth boundary $\Gamma $, e.g.~$\Gamma$ is of class $C^{2,\alpha}$ for some $\alpha>0$, and $0<T<\infty$. We study the reaction-diffusion model  
\begin{equation}\label{System.OrginalSys}
	\left\{
	\begin{array}{lll}
	\displaystyle \pa_t R_1^\varepsilon &\hspace*{-0.2cm}=\hspace*{-0.2cm}& \displaystyle d_{R_1} \Delta R_1^\varepsilon + \gamma_1 R_1^\varepsilon(\Rhat-R^\varepsilon)  - \eta_1R_1^\varepsilon - \frac{1}{\eps}\bra{f(S^\varepsilon) R_1^\varepsilon -  g(S^\varepsilon) R_2^\varepsilon  }, \vspace*{0.15cm} \\
		\displaystyle \pa_t R_2^\varepsilon &\hspace*{-0.2cm}=\hspace*{-0.2cm}& \displaystyle  d_{R_2}\Delta R_2^\varepsilon + \gamma_2 R_2^\varepsilon(\Rhat-R^\varepsilon) - \eta_2 R_2^\varepsilon + \frac{1}{\eps} \bra{f(S^\varepsilon) R_1^\varepsilon - g(S^\varepsilon) R_2^\varepsilon }, \vspace*{0.15cm} \\
		\displaystyle \pa_t S^\varepsilon &\hspace*{-0.2cm}=\hspace*{-0.2cm}& \displaystyle d_S\Delta S^\varepsilon + \mu\big(\eta_1R_1^\varepsilon +  \eta_2 R_2^\varepsilon\big) - \rho S^\varepsilon ,   
	\end{array}
	\right.
\end{equation} 
in $\Omega_T:=\Omega \times (0,T)$
subjected to homogeneous Neumann boundary conditions 
\begin{equation}
		\nabla R_1^\varepsilon \cdot \nu = \na R_2^\varepsilon \cdot \nu = \na S^\varepsilon \cdot \nu = 0 \quad \text{on } \Gamma_T:=\Gamma \times(0,T),
\end{equation}
and nonnegative bounded initial conditions
\begin{equation}
		(R_1^\varepsilon , R_2^\varepsilon , S^\varepsilon )|_{t=0} = (R_{10}  , R_{20} , S_0 ) \quad \text{on } \Omega, 
\label{Condition.Initial}
\end{equation}
where $R_1^\eps=R_1^\eps(x,t)$ is the biomass density of healthy roots, $R_2^\eps=R_2^\varepsilon(x,t)$ the biomass density of roots exposed to the toxicity, and $S^\eps=S^\varepsilon(x,t)$ the toxicity concentration at $(x,t)\in \Omega_T$. 
It is assumed that both roots and toxicity diffuse into the soil. The constants $d_{R_1}$, $d_{R_2}$, $d_S$ are positive diffusion rates of the corresponding species, and it is assumed that the roots exposed to the toxicity diffuse slower than healthy roots, i.e.~$d_{R_2}<d_{R_1}$. The growth of both type of roots is modeled with a logistic growth rate depending on the total biomass $R^\varepsilon:=R^\eps_1+R^\eps_2$ and a carrying capacity $\Rhat$, with coefficients $\gamma_1$ and $\gamma_2$ with $\gamma_2<\gamma_1$. It is also assumed that both types of roots have linear mortality rates with coefficients $\eta_1$ and $\eta_2$ with $\eta_1<\eta_2$. The death and decomposing biomass are then converted into toxicity with conversion factor $\mu$ and toxicity degrades with coefficient $\rho$. The constants $\gamma_1,\, \gamma_2,\, \eta_1,\, \eta_2,\,\mu,\,\rho,\, \Rhat$ are non-negative.
The remaining terms that appear in the system \eqref{System.OrginalSys} are the switch between the two states $R_1^\eps$ and $R_2^\eps$ happening at a faster time-scale. Therefore, the time-scale parameter $\eps$ is supposed to be small, namely $0< \eps \ll 1$. In particular, $f(S^\eps)$ and $g(S^\eps)$ are the non-negative toxicity-dependent switching rates from healthy roots to exposed ones and vice versa. 
It is natural to impose that $f$ is increasing while $g$ is decreasing from $[0,\infty)$ to $[0,\infty)$. 

\medskip

Some typical expressions of $f$ and $g$ are
\begin{itemize}
\item Power functions: $\displaystyle (f(s),g(s))=(s^p,(1+s)^q)$ for $q<0<p$;
\item Holing type II functions: $(f(s),g(s))=\left(\frac{a_1s+b_1}{c_1s+d_1},\frac{a_2s+b_2}{c_2s+d_2}\right)$ for $a_i,b_i,c_i,d_i \ge 0$, $i=1,2$, such that $a_2d_2-b_2c_2<0<a_1d_1-b_1c_1$;  
\item Saturation transition: $\displaystyle (f(s),g(s))=(s,(\Shat -s)_+)$ for a positive constant $\Shat$. 
\end{itemize}

\subsection{Fast reaction limit} Reasonably, as $\varepsilon$ is very small, the system is expected to be close to the critical manifold over the evolution process, where all species tend to attain their steady state driven by their diffusion. This motivates us to investigate the limit of the system as $\varepsilon\to 0^+$. Formally, we expect that  
\begin{equation}
(R_1^\eps,R_2^\eps,S^\eps) \to (R_1,R_2,S) \quad \text{and} \quad f(S^\varepsilon) R_1^\varepsilon -  g(S^\varepsilon)R_2^\varepsilon \to 0,  
\label{Introduction.ConvergencetoCritMani}
\end{equation}
which also yields
\begin{align}
 f(S) R_1 -  g(S)R_2 = 0 . \label{Formula.CriticalManifold}
\end{align}
Denoting by $R$ the limit of $R^\varepsilon:= R_1^\varepsilon + R_2^\varepsilon$, i.e. $R=R_1+R_2$, we get from \eqref{Formula.CriticalManifold} that 
\begin{equation}
\begin{aligned}
	R_1 = \frac{g(S)}{f(S)+g(S)}R =: \xi_1(R,S), \\ R_2 = \frac{f(S)}{f(S)+g(S)}R =: \xi_2(R,S).
	\end{aligned}  \label{Formula.R1R2}
\end{equation}
By adding the first two equations of \eqref{System.OrginalSys}, and sending $\varepsilon\to 0^+$, we formally obtain the following cross-diffusion equation for $R$ and $S$
\begin{equation}\label{System.LimitingSys}
	\left\{
	\begin{aligned}
		\pa_t R &= \Delta\sbra{\bra{d_{R_1} - (d_{R_1}-d_{R_2})\frac{f(S)}{f(S)+g(S)}}R} + h(R,S),  \\
		\pa_t S &= d_S\Delta S + \mu \big(\eta_1\xi_1(R,S) +  \eta_2 \xi_2(R,S)\big) - \rho S,  
	\end{aligned}
	\right.
\end{equation}
with
\begin{align}
	\label{Formula.hRS}
	h(R,S):= \sum_{i=1}^2 \left(  \gamma_i \xi_i(R,S) (\Rhat-R)  -  \eta_i\xi_i(R,S) \right) , 
\end{align}
subjected to homogeneous Neumann boundary condition 
\begin{equation}
	\nabla R\cdot \nu = \na S\cdot \nu = 0, \quad x\in\pa\Omega \times (0,T), 
	\label{Condition.LimBoundaryCond}
\end{equation}
and initial data
\begin{equation}
	R(x,0) = R_{10}(x) + R_{20}(x), \quad S(x,0) = S_0(x), \qquad x\in\Omega.	\label{Condition.LimInitialCond}
\end{equation}
The fast reaction system \eqref{System.OrginalSys}-\eqref{Condition.Initial} has been formally reduced to the cross-diffusion system \eqref{Formula.CriticalManifold}-\eqref{Condition.LimInitialCond} by passing $\varepsilon\to  0^+$, where the toxicity $S$ affects the diffusion of the total roots $R$.

In this work, we study the fast reaction limit passing from the fast reaction system \eqref{System.OrginalSys}-\eqref{Condition.Initial} to the limiting system \eqref{Formula.CriticalManifold}-\eqref{Condition.LimInitialCond}, focused on the rigorous derivation of the limiting system, its well-posedness (i.e., global existence, uniqueness, and stability of solutions), and the convergence rate of the limit. Differences in the regularity of solutions to these systems are discussed due to their different structures.

\subsection{Main results and key ideas}

Before presenting our main results, we introduce the following assumption on the initial data and the transition functions, where for a Sobolev space $X$ defined on $\Omega$, $X_+:= \{f\in X: f\ge 0 \text{ a.e. in } \Omega\}$.
\begin{assumption} \label{Assumption.AllinOne} Let $(R_{10},R_{20},S_0)\in L^\infty_+(\Omega)^2 \times W^{2,\infty}_+(\Omega)$, and $f: \R_+ \to \R_+$, $g: \R_+ \to (0,\infty)$ be $C^2$ functions such that $f' \ge 0$, $g'\le 0$ on $\R_+$,  $f'(0)>0$, and 
\begin{align}
\label{Assumption.InitialData}
  \inf_{x\in\Omega } S_0(x) >0. 
\end{align}
\end{assumption}


\medskip

\textbf{Our first main result} rigorously studies the fast reaction limit passing from the reaction-diffusion system  \eqref{System.OrginalSys}-\eqref{Condition.Initial} to the cross-diffusion system \eqref{System.LimitingSys}-\eqref{Condition.LimInitialCond}.  Since this basically requires a priori estimates uniformly in $\varepsilon>0$, energy functions fitting to the structure of the system \eqref{System.OrginalSys}-\eqref{Condition.Initial} are considered. In fact, multiplying the equations for $R_1^\varepsilon$, $R_2^\varepsilon$ by $(f(S^\varepsilon)R_1^\varepsilon)^{p-1}$, $(g(S^\varepsilon)R_2^\varepsilon)^{p-1}$ respectively gives us a reasonable control over the fast reaction term, i.e. the term including $1/\varepsilon$. In other words, we use the energy function
\begin{align}
E_p^\varepsilon(t):=  \int_\Omega \Big( f(S^\varepsilon)^{p-1} (R_1^\varepsilon)^p +  g(S^\varepsilon)^{p-1} (R_2^\varepsilon)^p \Big) dx, \quad t>0,  \label{Function.Energy}
\end{align}
for $1<p<\infty$. Notice that due to the improved duality argument, see \cite{canizo2014improved}, $R_1^\varepsilon, R_2^\varepsilon$ are uniformly bounded in $L^{2+\delta}(\Omega_T)$, for some small constant $\delta>0$. This bound of $R_1^\eps, R_2^\eps$, in combination with the comparison principle and heat regularisation, gives a uniform bound on $\na S^\eps$ in $L^{2(2+\delta)}_{x,t}$.  Our strategy is to use a bootstrap argument as follows: first, we choose $p = 1+\delta$, and look at the evolution of $E^\eps_p$ where the established bounds on $R_1^\eps, R_2^\eps$ and $S^\eps$ are enough to obtain an estimate of the form
\begin{equation*}
	\frac{d}{dt}E_{1+\delta}^{\eps}(t) + \sum_{i=1}^2\int_{\Omega}(R_i^\eps)^{\delta - 1}{|\na R_i^{\eps}|^2}dx \le C_T.
\end{equation*}
This leads to 
\begin{equation*}
	\sum_{i=1}^2\bra{\|R_i^\eps\|_{L^\infty_tL^{1+\delta}_x} + \|\nabla (R_i)^{\frac{\delta+1}{2}}\|_{L^2_{x,t}}} \le C,
\end{equation*}
which consequently implies, by using a two-dimensional interpolation inequality,
\begin{equation*}
	\sum_{i=1}^2{\|R_i^{\eps}\|_{L^{2+2\delta}_{x,t}}} \le C_T.
\end{equation*}
Then, we choose $p = 1 + 2\delta$ and repeat the previous procedure to get
\begin{equation*}
	\sum_{i=1}^2{\|R_i^{\eps}\|_{L^{2+4\delta}_{x,t}}} \le C_T,
\end{equation*}
and therefore, by induction, we get for any $1\le n \in \mathbb N$,
\begin{equation}\label{Introduction.BootstrapArgument}
	\sum_{i=1}^2{\|R_i^{\eps}\|_{L^{2+2^n\delta}_{x,t}}} \le C_{T,n}.
\end{equation}
%
%
%
In this manner, we obtain the uniform boundedness of $\nabla R_1^\varepsilon, \nabla R_2^\varepsilon$ in $L^2(\Omega_T)$, and  $R_1^\varepsilon,R_2^\varepsilon$ in $L^p(\Omega_T)$ for any $1\le p<\infty$. Hence, the $\varepsilon$-uniform regularity of $(R_1^\varepsilon,R_2^\varepsilon,S^\varepsilon)$ is obtained in $L^p(\Omega_T)^2 \times W^{2,1}_p(\Omega_T)$, cf. Lemma \ref{Lem.Bootstrap}. 

\medskip

Our first main result is given in the following theorem.

\begin{theorem}[Fast reaction limit]    
\label{Theo.ConvergeToLimitSys} Suppose Assumption \ref{Assumption.AllinOne}. Let $(R_{1}^\varepsilon,R_{2}^\varepsilon,S^\varepsilon)$ be the classical solution to the fast reaction-diffusion system  \eqref{System.OrginalSys}-\eqref{Condition.Initial} for each $\varepsilon>0$. Then, up to subsequences (not relabeled),
	\begin{equation}
(R_{1}^\varepsilon,R_{2}^\varepsilon,S^\varepsilon) \to  (R_{1},R_{2},S)\quad  \text{in } \LQ{k}^2 \times L^\infty(\Omega_T) , \label{Convergence.Solutions}  
	\end{equation}
as $\eps \to 0^+$ for any $1\le k<\infty$, where $(R,S):=(R_1+R_2,S)$ is a weak solution to the limiting system \eqref{System.LimitingSys}-\eqref{Condition.LimInitialCond} in the sense of Definition \ref{Definition.WeakSolution}. In addition, $R_1$, $R_2$ can be  expressed in terms of $R$ and $S$ as \eqref{Formula.R1R2}. The convergence to the critical manifold is given by
\begin{equation}
\|f(S^\varepsilon) R_1^\varepsilon   - g(S^\varepsilon) R_2^\varepsilon\|_{L^q(\Omega_T)} \le C_T \varepsilon^{1/q}, \quad 2\le q<\infty.
\label{ConvergenceToCritMani}
\end{equation} 
\end{theorem}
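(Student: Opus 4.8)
The plan is to establish the convergence in two stages: first securing the $\varepsilon$-uniform estimates via the bootstrap argument, then using a compactness argument to pass to the limit.

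First I would invoke Lemma \ref{Lem.Bootstrap} to collect the $\varepsilon$-uniform bounds: $R_1^\varepsilon, R_2^\varepsilon$ bounded in $L^p(\Omega_T)$ for every finite $p$, together with $\nabla R_1^\varepsilon, \nabla R_2^\varepsilon$ bounded in $L^2(\Omega_T)$, and $S^\varepsilon$ bounded in $W^{2,1}_p(\Omega_T)$. The latter yields, via parabolic embedding in dimension two, a uniform $L^\infty(\Omega_T)$-bound and H\"older continuity for $S^\varepsilon$, so that up to a subsequence $S^\varepsilon \to S$ strongly in $L^\infty(\Omega_T)$ and hence $f(S^\varepsilon)\to f(S)$, $g(S^\varepsilon)\to g(S)$ uniformly. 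For the root densities, the combination of the $L^2$-gradient bound and the equations (which control $\partial_t R^\varepsilon$ for the \emph{sum} $R^\varepsilon$, where the singular terms cancel) gives enough compactness through an Aubin--Lions type argument to extract $R^\varepsilon \to R$ strongly in $L^k(\Omega_T)$ for all finite $k$; individually $R_1^\varepsilon \rightharpoonup R_1$, $R_2^\varepsilon \rightharpoonup R_2$ weakly, and strong convergence of $R_i^\varepsilon$ in $L^k$ follows once the critical-manifold relation is identified.

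The crucial quantitative step is the convergence-rate estimate \eqref{ConvergenceToCritMani}. The idea is to exploit the energy functional $E_p^\varepsilon$ directly: multiplying the first equation by $(f(S^\varepsilon)R_1^\varepsilon)^{p-1}$ and the second by $(g(S^\varepsilon)R_2^\varepsilon)^{p-1}$ and subtracting produces, after integration over $\Omega_T$, a term of the form $\tfrac{1}{\varepsilon}\iint_{\Omega_T}\big(f(S^\varepsilon)R_1^\varepsilon - g(S^\varepsilon)R_2^\varepsilon\big)\big((f(S^\varepsilon)R_1^\varepsilon)^{p-1}-(g(S^\varepsilon)R_2^\varepsilon)^{p-1}\big)\,dx\,dt$. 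By monotonicity of $z\mapsto z^{p-1}$, this integrand is nonnegative and is bounded below by a multiple of $|f(S^\varepsilon)R_1^\varepsilon - g(S^\varepsilon)R_2^\varepsilon|^{p}$ up to the $\varepsilon$-uniform $L^\infty$ and $L^p$ controls already in hand. Since all the remaining terms (diffusion, reaction, and the time-derivative of $E_p^\varepsilon$ evaluated between $0$ and $T$) are bounded uniformly in $\varepsilon$ by $C_T$, we obtain $\tfrac{1}{\varepsilon}\|f(S^\varepsilon)R_1^\varepsilon - g(S^\varepsilon)R_2^\varepsilon\|_{L^p(\Omega_T)}^{p} \le C_T$, which is precisely \eqref{ConvergenceToCritMani} with $q=p$ after taking the $p$-th root. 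Taking $\varepsilon \to 0^+$ in this estimate also forces $f(S)R_1 - g(S)R_2 = 0$ a.e., giving \eqref{Formula.CriticalManifold} and thence the explicit formulas \eqref{Formula.R1R2}.

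Finally I would pass to the limit in the weak formulation: adding the first two equations of \eqref{System.OrginalSys} cancels the singular terms, and using the strong convergence of $R^\varepsilon$, $S^\varepsilon$ together with the identified relations \eqref{Formula.R1R2} one identifies the limiting flux $\big(d_{R_1}-(d_{R_1}-d_{R_2})\tfrac{f(S)}{f(S)+g(S)}\big)R$ and the reaction term $h(R,S)$; the $S$-equation passes to the limit analogously. The main obstacle I anticipate is handling the nonlinear diffusion flux $\Delta[d_{R_2}R_2^\varepsilon + d_{R_1}R_1^\varepsilon]$ in the limit: weak $L^2$-convergence of the gradients alone does not identify the product with the $S$-dependent coefficient, so one genuinely needs the \emph{strong} convergence of $R_i^\varepsilon$ (not merely weak), which in turn relies on the rate estimate \eqref{ConvergenceToCritMani} to pin down $R_1^\varepsilon, R_2^\varepsilon$ individually from the strongly convergent sum $R^\varepsilon$ via the continuous map $(R,S)\mapsto(\xi_1(R,S),\xi_2(R,S))$.
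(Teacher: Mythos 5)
Your proposal follows essentially the same route as the paper: uniform bounds from the bootstrap lemma, Aubin--Lions compactness for $S^\varepsilon$ and for the sum $R^\varepsilon$ (whose equation has no singular term), the energy functional $E_p^\varepsilon$ to extract $\tfrac{1}{\varepsilon}\iint_{\Omega_T} Q_p^\varepsilon \le C_{p,T}$ and hence \eqref{ConvergenceToCritMani}, and recovery of the individual components $R_1^\varepsilon, R_2^\varepsilon$ from the strongly convergent sum via the algebraic relation \eqref{Formula.R1R2}. One small clarification: the lower bound $Q_p^\varepsilon \ge |f(S^\varepsilon)R_1^\varepsilon - g(S^\varepsilon)R_2^\varepsilon|^p$ follows from the elementary inequality $|x^{p-1}-y^{p-1}|\ge |x-y|^{p-1}$ for $x,y\ge 0$ and $p\ge 2$, and does not require any $\varepsilon$-uniform $L^\infty$ control on $R_1^\varepsilon, R_2^\varepsilon$ (which, as Remark \ref{Remark.FastReactionLim} notes, is not available); also, the two multiplied equations are added, not subtracted, the sign working out because the fast-reaction terms enter the two equations with opposite signs.
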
 

\begin{remark} \label{Remark.FastReactionLim}



The uniform boundedness of $R_1^\varepsilon,R_2^\varepsilon$ in $L^\infty(\Omega_T)$ is not directly obtained from \eqref{Introduction.BootstrapArgument} because of the implicit dependence of $C_{T,n}$ on $n$. In other words, $\lim_{n\to \infty} C_{T,n}$ may not be finite.  

\end{remark}

Since the limiting system includes cross-diffusion, a structure different from the fast reaction-diffusion system, a better insight into its well-posedness is needed to reveal the difference between the regularity of $R,S$ and the $\varepsilon$-uniform regularity of $R^\varepsilon,S^\varepsilon$. By observing the logistic structure, we can perform a delicate Alikakos iteration \cite{alikakos1979application}, which results in an $L^\infty(\Omega_T)$-boundedness for $R$, cf. Lemma \ref{Lem.LInfinityEstimate}. This allows us to get the pointwise estimate 
\begin{align*}
\sum_{i=1}^2 |\xi_i(\widetilde R,\widetilde S)-\xi_i(R,S)|   \le C_{T}  \big( |\widetilde R - R| +|\widetilde S-S| \big), 
\end{align*}
where $(\widetilde{R},\widetilde{S})$ is another weak solution to the limiting system, cf. \eqref{Proof.LInfDifferenceStep5}, and $\xi_i$ are defined in \eqref{Formula.R1R2}. Then, we can control the difference $\|\widetilde S - S\|_{L^\infty(\Omega_t)}$ by $\|(\widetilde R,\widetilde S) - (R,S)\|_{L^2(\Omega_t)}$ and $\|\nabla((\widetilde R,\widetilde S) - (R,S))\|_{L^2(\Omega_t)}$ as discussed in Lemma \ref{Lem.LInfDifference}. 
Therefore, the mixed gradient term including $\nabla R, \nabla \widetilde R$ can be controlled, cf. Lemma \ref{Lem.DifficultTerm}. With this point, we can combine $L^p-L^q$ estimates for the heat semigroup and the heat regularisation to prove the uniqueness and stability of the weak solution.    
Since the weak solution is smooth enough, especially the component $S$, it suggests to rewrite the equation for $R$ as  
\begin{align*}
\partial_t R - a(S)\Delta R - b(S)\cdot \nabla R + c(S)R = h(R,S) ,
\end{align*} 
where the coefficients $a(S)$, $b(S)$, $c(S)$ do not depend on $R$. The smoothness of $S$ ensures that $a(S)$ is positive, bounded, and H\"older continuous, and $b(S),c(S)$ are smooth.  Thus, the weak solution is regular and satisfies the limiting system in the strong sense.

\begin{theorem}[Well-posedness of the limiting system]
\label{Theo.WellposedLimitSys}
The weak solution $(R,S)$ to \eqref{System.LimitingSys}-\eqref{Condition.LimInitialCond} obtained by Theorem \ref{Theo.ConvergeToLimitSys} is unique and enjoys the higher regularity
\begin{align}
\|S\|_{W^{2,1}_q(\Omega_T)} +   \|R\|_{L^\infty(\Omega_T)} + \|R\|_{L^2(0,T;H^1(\Omega))} \le C_T, 
\label{Regularity.LimitingSol}
\end{align}
for any $1\le q<\infty$. Furthermore, we have the following stability estimate: If $(\widetilde{R},\widetilde{S})$ is the weak solution to \eqref{System.LimitingSys}-\eqref{Condition.LimInitialCond} with respect to the given initial data $(\widetilde{R}_0,\widetilde{S}_0)$ then
\begin{equation}
\begin{aligned}
\|(\widetilde R,\widetilde S)-(R,S)\|_{L^{2}(0,T;H^1(\Omega)) \times W^{2,1}_{2}(\Omega_T)} \le C_T \|(\widetilde R_0,\widetilde S_0)-(R_0,S_0)\|_{L^{2}(\Omega) \times W^{2,1+}_{2+}(\Omega)} .         
\end{aligned} 
\label{Estimate.Stability}
\end{equation}
 The couple $(R,S)$ is also the strong solution to \eqref{System.LimitingSys}-\eqref{Condition.LimInitialCond}, and satisfies       
\begin{align}
\|R\|_{W^{2,1}_q(\Omega_T)} + \|S\|_{W^{2,1}_q(\Omega_T)}
   \le C_T.  
\label{STRONGregularity}
\end{align}  
\end{theorem}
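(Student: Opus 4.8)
The plan is to upgrade the weak solution furnished by Theorem~\ref{Theo.ConvergeToLimitSys} in three stages: regularity of the two components, then uniqueness and stability, and finally the strong formulation. \emph{First}, I would exploit the logistic structure of the $R$-equation. Since $\xi_1(R,S)+\xi_2(R,S)=R$ and $\gamma_2\le\gamma_1$, one has $h(R,S)=(\Rhat-R)\sum_{i}\gamma_i\xi_i(R,S)-\sum_i\eta_i\xi_i(R,S)\le \gamma_2 R(\Rhat-R)$ whenever $R\ge\Rhat$, so $h(R,S)\le C_T-cR^2$ for large $R$; together with the fact that the diffusion coefficient $a(S):=d_{R_1}-(d_{R_1}-d_{R_2})\frac{f(S)}{f(S)+g(S)}$ stays between $d_{R_2}$ and $d_{R_1}$, this is exactly the dissipative structure needed to run the Alikakos--Moser iteration of Lemma~\ref{Lem.LInfinityEstimate} and obtain $\|R\|_{L^\infty(\Omega_T)}\le C_T$. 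The bound $\|R\|_{L^2(0,T;H^1(\Omega))}\le C_T$ then follows by passing the $\varepsilon$-uniform gradient estimate of Lemma~\ref{Lem.Bootstrap} to the limit and invoking weak lower semicontinuity. With $R\in L^\infty(\Omega_T)$, the source in the $S$-equation is bounded, so maximal $L^q$-regularity for the heat operator yields $\|S\|_{W^{2,1}_q(\Omega_T)}\le C_T$ for every $1\le q<\infty$; this proves \eqref{Regularity.LimitingSol}, and for $q>4$ the embedding $W^{2,1}_q(\Omega_T)\hookrightarrow C^{1+\alpha,(1+\alpha)/2}(\overline{\Omega_T})$ gives that $S$ is H\"older continuous with $\nabla S\in L^\infty(\Omega_T)$ — the crucial point that $S$ is far more regular than $R$.

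\emph{Second}, for uniqueness and the stability bound \eqref{Estimate.Stability} I would compare two weak solutions $(R,S)$ and $(\widetilde R,\widetilde S)$ and put $U:=\widetilde R-R$, $V:=\widetilde S-S$. Testing the $V$-equation with $V$ and using the Lipschitz estimate $\sum_i|\xi_i(\widetilde R,\widetilde S)-\xi_i(R,S)|\le C_T(|U|+|V|)$ (legitimate because $R,S$ are bounded and $g>0$ keeps $f(S)+g(S)$ bounded below) controls $V$ in $L^\infty(0,t;L^2)\cap L^2(0,t;H^1)$. Testing the $U$-equation with $U$, the leading term splits as $\Delta\!\big[a(\widetilde S)\widetilde R-a(S)R\big]=\Delta[a(S)U]+\Delta\!\big[(a(\widetilde S)-a(S))\widetilde R\big]$: the first piece produces the good dissipation $-\int a(S)|\nabla U|^2$ plus a harmless term containing $\nabla a(S)=a'(S)\nabla S\in L^\infty$, while the second is the difficult mixed-gradient term of Lemma~\ref{Lem.DifficultTerm}, bounded after integration by parts by $C_T\|V\|_{L^\infty(\Omega_t)}\|\nabla\widetilde R\|_{L^2(\Omega_t)}\|\nabla U\|_{L^2(\Omega_t)}$. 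Here I would substitute the control $\|V\|_{L^\infty(\Omega_t)}\le C_T\big(\|(U,V)\|_{L^2(\Omega_t)}+\|\nabla(U,V)\|_{L^2(\Omega_t)}\big)$ of Lemma~\ref{Lem.LInfDifference}.

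I expect this difficult term to be the main obstacle, because $\nabla\widetilde R$ is only in $L^2(\Omega_T)$ and the substituted bound reintroduces $\|\nabla U\|_{L^2(\Omega_t)}$. The resolution is that, by absolute continuity of the Lebesgue integral, $\|\nabla\widetilde R\|_{L^2(\Omega\times[\tau,\tau'])}\to 0$ as $\tau'-\tau\to0$ uniformly in $\tau$; hence I would partition $[0,T]$ into finitely many time slabs on each of which the coefficient of $\|\nabla U\|_{L^2}^2$ coming from Young's inequality is at most $\tfrac12 d_{R_2}$ and is therefore absorbed by the dissipation. A Gr\"onwall argument on each slab, chained across the partition, then closes the estimate and delivers both uniqueness (when the initial data coincide) and the stability inequality \eqref{Estimate.Stability}. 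It is precisely the superior smoothness of $S$, which makes $V$ controllable in $L^\infty_{x,t}$, that enables this absorption.

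\emph{Third}, with $S$ smooth and $R\in L^\infty(\Omega_T)$ established, I would recast the $R$-equation in non-divergence form, using $\Delta[a(S)R]=a(S)\Delta R+2a'(S)\nabla S\cdot\nabla R+\big(a''(S)|\nabla S|^2+a'(S)\Delta S\big)R$, as
\[ \partial_t R-a(S)\Delta R-b(S)\cdot\nabla R+c(S)R=h(R,S), \]
where $b(S)=2a'(S)\nabla S\in L^\infty(\Omega_T)$ and $c(S)=-a''(S)|\nabla S|^2-a'(S)\Delta S\in L^q(\Omega_T)$ depend only on $S$, and $a(S)$ is uniformly elliptic and H\"older continuous. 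This is a linear parabolic equation in $R$ with given coefficients, so maximal $L^q$-regularity applies: estimating $\|R\|_{W^{2,1}_q}\le C\big(\|b(S)\cdot\nabla R\|_{L^q}+\|c(S)R\|_{L^q}+\|h(R,S)\|_{L^q}+\|R_0\|\big)$ and absorbing the drift through the Gagliardo--Nirenberg interpolation $\|\nabla R\|_{L^q}\le \epsilon\|R\|_{W^{2,1}_q}+C_\epsilon\|R\|_{L^q}$, one bootstraps from $R\in L^\infty(\Omega_T)\cap L^2(0,T;H^1(\Omega))$ up to $\|R\|_{W^{2,1}_q(\Omega_T)}\le C_T$ for every $q$. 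Together with the bound on $S$ this is \eqref{STRONGregularity} and shows that $(R,S)$ solves \eqref{System.LimitingSys}-\eqref{Condition.LimInitialCond} in the strong sense.
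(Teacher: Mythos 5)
Your proposal is correct in outline and follows the same three-stage architecture as the paper: Alikakos iteration for $\|R\|_{L^\infty(\Omega_T)}$, an $L^\infty_x$-control of $\widetilde S-S$ via the heat semigroup to tame the mixed-gradient term, and a rewriting of the $R$-equation in non-divergence form with maximal $L^q$-regularity for the strong formulation. The one place where you genuinely depart from the paper is the mechanism for absorbing the gradient contribution reintroduced by the difficult term. You propose to exploit absolute continuity of $\int|\nabla\widetilde R|^2$, partition $[0,T]$ into slabs on which $\|\nabla\widetilde R\|_{L^2(\Omega\times[\tau,\tau'])}$ is small, and chain Gr\"onwall estimates. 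The paper instead builds the smallness into Lemma~\ref{Lem.LInfDifference} itself: since the Duhamel estimate only needs the $L^{2+\beta}$-norm of the source, Gagliardo--Nirenberg and Young place an \emph{arbitrarily small} coefficient $\alpha_1$ in front of $\int_0^t\|\nabla(\widetilde R-R,\widetilde S-S)\|_{L^2}^2$; choosing first $\alpha_2$ (Young on the mixed term) and then $\alpha_1$ small relative to $C_{\alpha_2,T}$ absorbs everything globally in time with a single Gr\"onwall application and no partition. Both mechanisms can be made to work, but your chaining is more delicate than you indicate: on the slab starting at $\tau$, the Duhamel-based $L^\infty_x$-control of $V=\widetilde S-S$ requires $V(\tau)$ in a space embedding into $L^\infty(\Omega)$ (the paper uses $W^{2,1+}_{2+}(\Omega)$ for the initial datum), whereas the Gr\"onwall output of the preceding slab only controls $V(\tau)$ in $L^2(\Omega)$; you must either always run Duhamel from $t=0$ (in which case the number of slabs, which grows as the smallness threshold shrinks, re-enters the constant multiplying $\sup\|V\|_{L^\infty}^2$) or propagate an $L^\infty_x$-bound on $V$ across slab boundaries via heat-semigroup smoothing. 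The paper's two-parameter trick sidesteps this entirely. Two minor additional points: the stability estimate \eqref{Estimate.Stability} requires one further application of Lemma~\ref{Lem.HeatRegularisation} to the equation for $\widetilde S-S$ to upgrade the closed $L^2$-control to $W^{2,1}_2(\Omega_T)$, cf.~\eqref{Proof.TheoUniqueStep7}, which you omit; and in the Alikakos step the crude bound $h(R,S)\le CR$ suffices, so your sharper quadratic-damping observation is not needed.
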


\textbf{Our second main result} is the convergence rate of the fast reaction limit, namely, the rates $(|R^\varepsilon-R|,|S^\varepsilon-S|)$. One of the biggest challenges here is the lack of an estimate for the gradient convergence  
$\nabla(f(S^\varepsilon) R_1^\varepsilon -  g(S^\varepsilon)R_2^\varepsilon)\to 0$ and its rate. This naturally arises when considering energy functions for the equation  
\begin{align}
\label{Equation.Reps-R}
 \partial_t (R^\varepsilon-R) =   \Delta \left[ \sum_{i=1}^2 d_{R_i}(R_i^\varepsilon - \xi_i(R,S)) \right] +  (h(R^\varepsilon,S^\varepsilon)- h(R,S)).  
\end{align}
To overcome the difficulty, the main idea is to project the above equation onto  $H^2(\Omega)^*$. More precisely, by considering the Neumann Laplacian $-\Delta_\zeta:=-(\Delta-\zeta I)$ with a fixed constant $\zeta>0$, we can imply from \eqref{Equation.Reps-R} that  
\begin{align}
\label{Equation.U}
 \partial_t U^\varepsilon =  (I+\zeta \Delta_\zeta^{-1})\Delta_\zeta X^\varepsilon + Y^\varepsilon,  
\end{align}
where 
$U^\varepsilon$, and $X^\varepsilon$, $Y^\varepsilon$ are respectively the images of $R^\varepsilon - R$,  $\sum_{i=1}^2 d_{R_i}(R_i^\varepsilon - \xi_i(R,S))$, and $h(R^\varepsilon,S^\varepsilon) - h(R,S)$  after acting $\Delta_\zeta^{-1}$. Based on the equation \eqref{Equation.U}, the convergence \eqref{ConvergenceToCritMani} is enough to construct an estimate for the rate $(|R^\varepsilon-R|,|S^\varepsilon-S|)$.          
 
\begin{theorem}[Convergence rate of the fast reaction limit]
\label{Theo.ConRate}  
For each $\varepsilon>0$ let $(R_{1}^\varepsilon,R_{2}^\varepsilon,S^\varepsilon)$ be the classical solution to \eqref{System.OrginalSys}-\eqref{Condition.Initial}, and $(R,S)$ be the strong solution to \eqref{System.LimitingSys}-\eqref{Condition.LimInitialCond}. Then   
\begin{align}\label{convergence_rate}
\|R^\varepsilon-R\|_{L^2(\Omega_T)} + \|S^\varepsilon-S\|_{W^{2,1}_2(\Omega_T)}  \le C_T \sqrt{\varepsilon}. 
\end{align} 
\end{theorem}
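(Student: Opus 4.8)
The plan is to exploit the projected equation \eqref{Equation.U}. Using $\Delta_\zeta^{-1}\Delta = I+\zeta\Delta_\zeta^{-1}$ and writing $\phi_i := R_i^\varepsilon-\xi_i(R,S)$, so that $R^\varepsilon-R=\phi_1+\phi_2$ and $W := \sum_{i=1}^2 d_{R_i}\phi_i$ satisfies $\Delta_\zeta X^\varepsilon = W$, equation \eqref{Equation.U} reads $\partial_t U^\varepsilon = W + \zeta X^\varepsilon + Y^\varepsilon$, where $U^\varepsilon=\Delta_\zeta^{-1}(R^\varepsilon-R)$ and $\Delta_\zeta U^\varepsilon = R^\varepsilon-R$. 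Testing this identity with $\Delta_\zeta U^\varepsilon = R^\varepsilon-R$ in $L^2(\Omega)$ and using self-adjointness of $\Delta_\zeta$ gives
\[
\tfrac12\tfrac{d}{dt}\Big(\|\nabla U^\varepsilon\|_{L^2(\Omega)}^2 + \zeta\|U^\varepsilon\|_{L^2(\Omega)}^2\Big) + \langle W, R^\varepsilon-R\rangle = -\zeta\langle X^\varepsilon, R^\varepsilon-R\rangle - \langle Y^\varepsilon, R^\varepsilon-R\rangle .
\]
Since the initial data of the two systems coincide by \eqref{Condition.LimInitialCond}, we have $U^\varepsilon(0)=0$; the plan is then to integrate in time and extract $\int_0^T\|R^\varepsilon-R\|_{L^2(\Omega)}^2$ from the diffusion term $\langle W, R^\varepsilon-R\rangle$, with the $\sqrt\varepsilon$ coming from \eqref{ConvergenceToCritMani}.

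The heart of the argument, and the step I expect to be the main obstacle, is to show that $\langle W, R^\varepsilon-R\rangle$ is coercive in $\|R^\varepsilon-R\|_{L^2(\Omega)}$, even though the raw quadratic form $\int_\Omega(d_{R_1}\phi_1+d_{R_2}\phi_2)(\phi_1+\phi_2)$ is indefinite because $d_{R_1}\neq d_{R_2}$. Here the convergence to the critical manifold is essential: setting $z^\varepsilon := f(S^\varepsilon)R_1^\varepsilon-g(S^\varepsilon)R_2^\varepsilon$, a first-order expansion of $f,g$ around $S$ together with $f(S)\xi_1(R,S)=g(S)\xi_2(R,S)$ from \eqref{Formula.CriticalManifold} shows $f(S^\varepsilon)\phi_1-g(S^\varepsilon)\phi_2 = z^\varepsilon + O(|S^\varepsilon-S|)$. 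Combined with $\phi_1+\phi_2 = R^\varepsilon-R$, this is a $2\times2$ linear system for $(\phi_1,\phi_2)$ whose determinant $-(f(S^\varepsilon)+g(S^\varepsilon))$ is bounded away from $0$ (as $g>0$ and $S^\varepsilon$ is bounded), so that $\|\phi_1\|_{L^2}+\|\phi_2\|_{L^2}\le C(\|R^\varepsilon-R\|_{L^2}+\|z^\varepsilon\|_{L^2}+\|S^\varepsilon-S\|_{L^2})$. Solving explicitly yields $W = \tfrac{d_{R_1}g(S^\varepsilon)+d_{R_2}f(S^\varepsilon)}{f(S^\varepsilon)+g(S^\varepsilon)}(R^\varepsilon-R) + O(|z^\varepsilon|+|S^\varepsilon-S|)$, and since $\tfrac{d_{R_1}g+d_{R_2}f}{f+g}=d_{R_2}+\tfrac{(d_{R_1}-d_{R_2})g}{f+g}\ge d_{R_2}>0$ we obtain the crucial lower bound $\langle W, R^\varepsilon-R\rangle \ge d_{R_2}\|R^\varepsilon-R\|_{L^2}^2 - C(\|z^\varepsilon\|_{L^2}+\|S^\varepsilon-S\|_{L^2})\|R^\varepsilon-R\|_{L^2}$.

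For the right-hand side, the whole point of the projection is that acting by $\Delta_\zeta^{-1}$ gains two derivatives, so that no gradient of the fast-reaction quantity is needed. Indeed $\langle X^\varepsilon, R^\varepsilon-R\rangle = \langle W, U^\varepsilon\rangle$ and $\langle Y^\varepsilon, R^\varepsilon-R\rangle = \langle h(R^\varepsilon,S^\varepsilon)-h(R,S), U^\varepsilon\rangle$; bounding $\|W\|_{L^2}$ as above, and using the local Lipschitz estimate $|h(R^\varepsilon,S^\varepsilon)-h(R,S)|\le C(|R^\varepsilon-R|+|S^\varepsilon-S|)$ (valid from the $L^\infty$-bound of $R$ in Lemma \ref{Lem.LInfinityEstimate} and the uniform $L^p$-bounds of Lemma \ref{Lem.Bootstrap}, with a harmless $O(z^\varepsilon)$ correction from the difference between the true reaction of $R^\varepsilon$ and $h(R^\varepsilon,S^\varepsilon)$), Young's inequality absorbs the $\|R^\varepsilon-R\|_{L^2}^2$ part into the coercive term and leaves only $\|U^\varepsilon\|_{L^2}^2$, $\|S^\varepsilon-S\|_{L^2}^2$ and $\|z^\varepsilon\|_{L^2}^2$. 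I would then run a parallel $L^2$-energy estimate for the equation of $S^\varepsilon-S$, whose source $\mu\sum_i\eta_i\phi_i$ is controlled by the same bound on $\|\phi_i\|_{L^2}$, form the combined functional $\tfrac{M}{2}(\|\nabla U^\varepsilon\|_{L^2}^2+\zeta\|U^\varepsilon\|_{L^2}^2)+\tfrac12\|S^\varepsilon-S\|_{L^2}^2$ with $M$ large enough to absorb the $C\|R^\varepsilon-R\|_{L^2}^2$ produced by the $S$-estimate into the coercive $R$-dissipation, and apply Grönwall. Since this functional vanishes at $t=0$ and is driven only by $\|z^\varepsilon\|_{L^2(\Omega)}^2$, whose time integral is $\le C_T\varepsilon$ by \eqref{ConvergenceToCritMani} with $q=2$, I conclude $\|R^\varepsilon-R\|_{L^2(\Omega_T)}^2+\|S^\varepsilon-S\|_{L^2(\Omega_T)}^2\le C_T\varepsilon$.

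Finally, to upgrade $S$ to the $W^{2,1}_2$-norm, I would apply maximal $L^2$-parabolic regularity (with Neumann data) to $\partial_t(S^\varepsilon-S)-d_S\Delta(S^\varepsilon-S)+\rho(S^\varepsilon-S)=\mu\sum_i\eta_i\phi_i$ with zero initial datum, obtaining $\|S^\varepsilon-S\|_{W^{2,1}_2(\Omega_T)}\le C\|\sum_i\eta_i\phi_i\|_{L^2(\Omega_T)}\le C(\|R^\varepsilon-R\|_{L^2(\Omega_T)}+\|z^\varepsilon\|_{L^2(\Omega_T)}+\|S^\varepsilon-S\|_{L^2(\Omega_T)})\le C_T\sqrt\varepsilon$. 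Together with the bound on $\|R^\varepsilon-R\|_{L^2(\Omega_T)}$ this yields \eqref{convergence_rate}. The delicate point throughout remains the coercivity step of the second paragraph: without quantitative closeness to the critical manifold the diffusion form is only positive semidefinite, and it is precisely the relation $f(S^\varepsilon)\phi_1-g(S^\varepsilon)\phi_2=z^\varepsilon+O(|S^\varepsilon-S|)$ that restores strict coercivity with constant $d_{R_2}$.
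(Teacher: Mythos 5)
Your proposal follows essentially the same route as the paper's proof: project the equation for $R^\varepsilon-R$ through $\Delta_\zeta^{-1}$, test with $R^\varepsilon-R=\Delta_\zeta U^\varepsilon$, recover strict coercivity of the diffusion form by rewriting $d_{R_1}(R_1^\varepsilon-\xi_1)+d_{R_2}(R_2^\varepsilon-\xi_2)$ as $\tfrac{d_{R_1}g^\varepsilon+d_{R_2}f^\varepsilon}{f^\varepsilon+g^\varepsilon}(R^\varepsilon-R)$ plus remainders of order $|z^\varepsilon|+|S^\varepsilon-S|$ (this is exactly the computation \eqref{Proof.LemConvRateMixterm1} in Lemma \ref{Lem.ConvRateMixterm}, and your bound on $\|\phi_1\|+\|\phi_2\|$ is Lemma \ref{Lem.AbsoluteDistance}), couple with a parallel $L^2$ estimate for $S^\varepsilon-S$, apply Gr\"onwall using \eqref{ConvergenceToCritMani} with $q=2$, and finish with heat regularisation for the $W^{2,1}_2$ norm. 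The one step you state too optimistically is the Lipschitz bound $|h(R^\varepsilon,S^\varepsilon)-h(R,S)|\le C(|R^\varepsilon-R|+|S^\varepsilon-S|)$: since the reaction is quadratic in the roots, the constant carries a factor $1+R^\varepsilon+R$, and $\|R^\varepsilon\|_{L^\infty(\Omega_T)}$ is \emph{not} uniformly bounded in $\varepsilon$ (cf.\ Remark \ref{Remark.FastReactionLim}); only the limit $R$ enjoys the $L^\infty$ bound of Lemma \ref{Lem.LInfinityEstimate}, and the uniform $L^p$ bounds of Lemma \ref{Lem.Bootstrap} do not yield a pointwise Lipschitz constant. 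The paper's Lemma \ref{Lem.NonTerm} therefore writes $R^\varepsilon=R+\Delta_\zeta U^\varepsilon$ and controls the resulting product $\dintt{}|U^\varepsilon\,\Delta_\zeta U^\varepsilon|\bigl(|z^\varepsilon|+|\Delta_\zeta U^\varepsilon|+|S^\varepsilon-S|\bigr)$ via the uniform $L^\infty(0,T;H^2(\Omega))$ bound on $U^\varepsilon$ together with Sobolev/Gagliardo--Nirenberg interpolation, absorbing a small multiple of $\|\Delta_\zeta U^\varepsilon\|_{L^2}^2$ into the coercive term. With that repair your argument coincides with the paper's.
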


\begin{remark} \label{Remark.ConvergenceRate}

\text{ }

\begin{enumerate} 

\medskip

\item In Iida-Mimura-Ninomiya \cite{iida2006diffusion}, a condition result was presented by imposing an assumption that the $\varepsilon$-dependent solution is uniformly bounded in $L^\infty(\Omega_T)$.

\item By interpolating the convergence \eqref{convergence_rate} with \eqref{Convergence.Solutions}, we can get the convergence rate in $\LQ{p}$ for any $1\le p <\infty$.

\end{enumerate}
\end{remark}

\medskip

\noindent\textbf{Organisation of the paper}.
The rest of this paper is organised as follows. In Section \ref{sec2}, the fast reaction limit passing from the fast reaction diffusion system \eqref{System.OrginalSys}-\eqref{Condition.Initial} to the cross-diffusion system \eqref{System.LimitingSys}-\eqref{Condition.LimInitialCond} is studied rigorously. In Section \ref{sec3}, we present a complete insight into the well-posedness of the system \eqref{System.LimitingSys}-\eqref{Condition.LimInitialCond}. An estimate for the convergence rate of the fast reaction limit is obtained in Section \ref{sec4}. Finally, in Section \ref{sec5}, we implement some numerical examples to illustrate the proposed analysis of the convergence rate.    
 
\medskip 

\noindent {\bf Notations.} We use one and the same symbol $C$ without distinction
in order to denote positive constants, which may change line by line. In some expressions, the dependence on $T,\delta,n,\dots$ will be emphasised by writing $C_{T,\delta,n,\dots}$.  

\section{Fast reaction limit}\label{sec2}

\subsection{Well-posedness of the  $\varepsilon$-dependent system}

\begin{proposition}[Well-posedness of  the $\varepsilon$-dependent system]  
	Let $\eps>0$.
 Assume that $f, g: \R_+ \to \R_+$ are locally Lipschitz continuous functions. Then, for any nonnegative initial data $R_{10}^\eps, R_{20}^\eps, S_0^\eps\in L^\infty(\Omega)$, there exists a unique global strong solution to system \eqref{System.OrginalSys} in the following sense: for $Z\in \{R_1^\eps, R_2^\eps, S^\eps\}$, $Z\in C([0,\infty);L^p(\Omega))\cap L^{\infty}_{\text{loc}}(\mathbb R_+; L^\infty(\Omega))$, and if $p>2$ and $T>0$ then $Z \in W^{(2,1)}_p(\Omega_T)$ for all $p>2$, $T>0$, and the equations in \eqref{System.OrginalSys} are satisfied for a.e. $(x,t)\in \Omega_T$.
	
	\medskip
	Moreover, since $f, g$ are smooth, the solution is smooth for positive time, i.e. for $Z\in \{R_1^\eps, R_2^\eps, S^\eps\}$, $Z\in C^{2,1}(\Omega\times (\tau,T))\cap C(\overline{\Omega}\times[\tau,T])$ for any $0<\tau<T$, and the equations in \eqref{System.OrginalSys} are satisfied everywhere.  
\end{proposition}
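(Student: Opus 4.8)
The plan is to run the classical four-step scheme for a semilinear parabolic system at fixed $\varepsilon>0$ — local solvability, positivity, a global $L^\infty$ a priori bound, and parabolic bootstrapping — the only genuine difficulty being the a priori bound. \emph{Local existence, uniqueness and positivity.} At fixed $\varepsilon>0$ the fast terms $\tfrac1\varepsilon(f(S^\varepsilon)R_1^\varepsilon-g(S^\varepsilon)R_2^\varepsilon)$, together with the logistic terms, form a locally Lipschitz Nemytskii map on $C(\overline\Omega)^3$. Since the Neumann realizations of $d_{R_1}\Delta$, $d_{R_2}\Delta$, $d_S\Delta$ generate analytic semigroups, a contraction/variation-of-constants argument (Henry, Amann) yields a unique maximal mild solution on $[0,T_{\max})$ together with the blow-up alternative: $T_{\max}=\infty$ or $\limsup_{t\uparrow T_{\max}}\|(R_1^\varepsilon,R_2^\varepsilon,S^\varepsilon)(t)\|_{L^\infty}=\infty$. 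Nonnegativity is quasi-positivity: the $R_1^\varepsilon$-reaction equals $\tfrac1\varepsilon g(S^\varepsilon)R_2^\varepsilon\ge0$ on $\{R_1^\varepsilon=0\}$, the $R_2^\varepsilon$-reaction equals $\tfrac1\varepsilon f(S^\varepsilon)R_1^\varepsilon\ge0$ on $\{R_2^\varepsilon=0\}$, and the $S^\varepsilon$-reaction equals $\mu(\eta_1R_1^\varepsilon+\eta_2R_2^\varepsilon)\ge0$ on $\{S^\varepsilon=0\}$, so the positive octant is invariant.

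\emph{The global bound (main obstacle).} Because the fast terms cancel in $R^\varepsilon:=R_1^\varepsilon+R_2^\varepsilon$ and $0\le R_i^\varepsilon\le R^\varepsilon$, it suffices to bound $R^\varepsilon$. Integrating its equation $\partial_tR^\varepsilon=\Delta(d_{R_1}R_1^\varepsilon+d_{R_2}R_2^\varepsilon)+F^\varepsilon$ over $\Omega$ (the Laplacian integrates to zero under Neumann conditions) and using $F^\varepsilon\le\gamma_1\Rhat R^\varepsilon-\gamma_2(R^\varepsilon)^2$, which follows from $\gamma_2R^\varepsilon\le\gamma_1R_1^\varepsilon+\gamma_2R_2^\varepsilon\le\gamma_1R^\varepsilon$, the logistic sink together with Young's inequality produces $R^\varepsilon\in L^2(\Omega_T)$. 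The naive energy estimate for $R^\varepsilon$ fails beyond this point because the effective diffusion $(d_{R_1}R_1^\varepsilon+d_{R_2}R_2^\varepsilon)/R^\varepsilon$ is only measurable with values in $[d_{R_2},d_{R_1}]$, the associated quadratic form in $(\nabla R_1^\varepsilon,\nabla R_2^\varepsilon)$ being indefinite. I therefore invoke the improved duality estimate of \cite{canizo2014improved} for this bounded-measurable-coefficient equation, which upgrades the bound to $R_1^\varepsilon,R_2^\varepsilon\in L^{2+\delta}(\Omega_T)$ for some $\delta>0$ depending on the ratio $d_{R_1}/d_{R_2}$.

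\emph{Bootstrap to $L^\infty$ and global existence.} With $L^{2+\delta}$ secured I return to the individual equations, each carrying a nondegenerate constant diffusion. Discarding the nonpositive terms gives $\partial_tR_1^\varepsilon-d_{R_1}\Delta R_1^\varepsilon\le\gamma_1\Rhat R_1^\varepsilon+\tfrac1\varepsilon\|g\|_\infty R_2^\varepsilon$, whose free term lies in $L^{2+\delta}(\Omega_T)$; comparison with the corresponding linear problem and the $L^p$–$L^q$ smoothing estimates for the Neumann heat semigroup (which accommodate the merely $L^\infty$ initial data) yield $R_1^\varepsilon\in L^\infty(\Omega_T)$, the time singularity of the semigroup kernel being integrable precisely because $2+\delta>\tfrac{n+2}{2}=2$ in dimension $n=2$; symmetrically for $R_2^\varepsilon$, and then $S^\varepsilon\in L^\infty$ by comparison in its linear equation with source bounded by $\mu\eta_2R^\varepsilon$. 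This excludes the blow-up alternative, so $T_{\max}=\infty$; the constants here may depend on $\varepsilon$, which is harmless, since the $\varepsilon$-uniform versions are deferred to Lemma \ref{Lem.Bootstrap}.

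\emph{Regularity and smoothing.} Once the solution is bounded, every reaction term lies in $L^\infty(\Omega_T)\subset L^p(\Omega_T)$, so maximal parabolic $L^p$-regularity gives $Z\in W^{2,1}_p(\Omega_T)$ for all $p<\infty$ and hence $Z\in C^{\alpha,\alpha/2}$. As $f,g\in C^2$, the compositions $f(S^\varepsilon),g(S^\varepsilon)$ and all reaction terms become Hölder continuous, and Schauder theory then bootstraps each component to $C^{2,1}(\Omega\times(\tau,T))\cap C(\overline\Omega\times[\tau,T])$ for every $0<\tau<T$, so the system holds classically for positive times. I expect the passage through $L^2$ and the improved-duality jump to $L^{2+\delta}$ to be the crux; everything downstream is routine parabolic theory.
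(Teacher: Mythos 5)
Your proposal follows essentially the same route as the paper: local existence via quasi-positivity and locally Lipschitz nonlinearities together with the blow-up alternative, the improved duality estimate applied to the equation for $R_1^\eps+R_2^\eps$ to reach $L^{2+\delta}(\Omega_T)$, heat-kernel smoothing (using $2+\delta>\tfrac{N+2}{2}=2$ in dimension two) to reach $L^\infty$, exclusion of blow-up, and maximal regularity plus Schauder bootstrapping for interior smoothness. Two remarks. First, your preliminary $L^2(\Omega_T)$ bound via the logistic sink is superfluous, and would in fact degenerate if $\gamma_2=0$, which the paper permits since the parameters are only assumed non-negative: the improved duality lemma applies directly to $\partial_t(R_1^\eps+R_2^\eps)-\Delta(d_{R_1}R_1^\eps+d_{R_2}R_2^\eps)\le C(R_1^\eps+R_2^\eps)$ with bounded initial data and yields $L^{2+\delta}$ in one step, which is exactly what the paper does. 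Second, and more substantively, your ordering of the $L^\infty$ bounds is reversed: you invoke $\|g\|_\infty$ to close the estimate for $R_1^\eps$ before knowing $S^\eps\in L^\infty(\Omega_T)$, but under the proposition's own hypothesis $g$ is merely locally Lipschitz from $\R_+$ to $\R_+$, so $\sup_{s\ge 0}g(s)$ may be infinite and $\|g(S^\eps)\|_{L^\infty(\Omega_T)}$ is not yet controlled at that stage (the global monotonicity of $g$ from Assumption \ref{Assumption.AllinOne} is not assumed here). The fix is immediate and is precisely the paper's ordering: the source $\mu(\eta_1R_1^\eps+\eta_2R_2^\eps)$ in the $S^\eps$ equation involves neither $f$ nor $g$ and already lies in $L^{2+\delta}(\Omega_T)$, so one first bounds $S^\eps$ in $L^\infty(\Omega_T)$ by heat regularisation, deduces $\|f(S^\eps)\|_{L^\infty(\Omega_T)}+\|g(S^\eps)\|_{L^\infty(\Omega_T)}\le C_T$ by continuity, and only then runs the comparison argument for $R_1^\eps$ and $R_2^\eps$. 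With that reordering your argument coincides with the paper's proof.
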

\begin{proof}
	Since the nonlinearities of \eqref{System.OrginalSys} are locally Lipschitz continuous and satisfy the quasi-positivity condition: $f_Z(R_1^\eps, R_2^\eps, S^\eps) \ge 0$ for $\{R_1^\eps, R_2^\eps, S^\eps\} \in \mathbb{R}_+^3$ with $Z=0$, where $f_Z(\cdot)$ is the nonlinearity of the equation for $Z$ in \eqref{System.OrginalSys}, we can apply standard results for reaction-diffusion systems, for instance, \cite{rothe2006global}, to obtain a unique nonnegative local strong solution on the maximal interval $(0,T_{\max})$. Moreover, we have the blow-up criteria
	\begin{equation*}
		T_{\max}<+\infty \quad \Rightarrow \quad \lim_{t\uparrow T_{\max}}\bra{\|R_1^\eps(t)\|_{\LO{\infty}} + \|R_2^\eps(t)\|_{\LO{\infty}} + \|S^\eps(t)\|_{\LO{\infty}}} = \infty.
	\end{equation*}
	We show the global existence by contradiction. Assume $T_{\max}<+\infty$. Let $T\in(0,T_{\max})$. In the following, we will denote by $C(T)$ a constant depending on $T>0$ that is finite for all $T<\infty$. Summing the equations of $R_1^\eps$ and $R_2^\eps$ leads to
	\begin{equation*}
		\pa_t(R_1^\eps + R_2^\eps) = \Delta (d_{R_1}R_1^\eps + d_{R_2}R_2^\eps) + \max\{\gamma_1,\gamma_2\}\widehat{R}(R_1^\eps + R_2^\eps).
	\end{equation*}
	By applying the improved duality lemma, see e.g. \cite{canizo2014improved,einav2020indirect}, we have
	\begin{equation*}
		\|R_1^\eps\|_{\LQ{2+\delta}} + \|R_2^\eps\|_{\LQ{2+\delta}} \le C(T)
	\end{equation*}
	for some $\delta>0$. From the equation of $S^\eps$, we have
	\begin{equation*}
		\pa_t S^\eps - d_S\Delta S^\eps + \rho S^\eps = \mu(\eta_1R_1^\eps + \eta_2R_2^\eps) \in \LQ{2+\delta}.
	\end{equation*}
	Using the smoothing effect of the heat operator, see e.g. \cite{einav2020indirect}, we obtain
	\begin{equation*}
		\|S^\eps\|_{\LQ{\infty}} \le C_T.
	\end{equation*}
	Due to the continuity of $f$ and $g$, 
	\begin{equation*}
		\|f(S^\eps)\|_{\LQ{\infty}} + \|g(S^\eps)\|_{\LQ{\infty}} \le C_T.
	\end{equation*}
	Thus
	\begin{equation*}
		\pa_t R_1^\eps - d_{R_1}\Delta R_1^\eps \le \gamma_1 \widehat{R} R_1^\eps + \frac 1\eps g(S^\eps)R_2^\eps \le C_T (R_1^\eps + R_2^\eps) \in \LQ{2+\delta}.
	\end{equation*}
	By the smoothing effect of the heat operator and comparison principle, we obtain
	\begin{equation*}
		\|R_1^\eps\|_{\LQ{\infty}} \le C_T.
	\end{equation*}
	The same argument applied to the equation of $R_2^\eps$ gives $  \|R_2^\eps\|_{\LQ{\infty}} \le C_T$. Therefore,
	\begin{equation*}
		\sup_{T\in(0,T_{\max})}\bra{\|R_1^\eps(t)\|_{\LO{\infty}} + \|R_2^\eps(t)\|_{\LO{\infty}} + \|S^\eps(t)\|_{\LO{\infty}}} \le C(T_{\max}) <+\infty,
	\end{equation*}
	which is a contradiction. Therefore, $T_{\max} = \infty$. Since  $f$ and $g$ are smooth, we can exploit the smoothing effect of the heat operator to see that for positive time, the solution is in fact infinitely differentiable and satisfies the system \eqref{System.OrginalSys} in the classical sense for any $t>0$, see e.g. \cite{amann1985global}.
\end{proof}

\subsection{Uniform-in-$\varepsilon$ bounds}
 
 This section begins with necessarily uniform estimates for solutions to the $\varepsilon$-dependent system given in Lemma \ref{Lem.UniformRegularity}. For the sake of convenience, we will say \textit{uniform/uniformly} to indicate uniform/uniformly with respect to $\varepsilon>0$. These uniform estimates will support the so-called modified energy estimate in Lemma \ref{Lem.EnergyEstimate}. Finally, we will establish rigorous convergence of the $\varepsilon$-dependent system to the limiting system in Subsection \ref{Sec.FRLsmoothcase}. 

We first need the following technical lemma, which holds for any dimension $N\ge 1$ and any bounded domain $\Omega$ with sufficiently smooth boundary.

\begin{lemma}[{\cite[Corollary of Theorem 9.1, Chapter IV]{ladyzhenskaia1988linear}}] \label{Lem.HeatRegularisation}
	  Given $1<q<\infty$ and $d>0$. Assume $f\in L^q(\Omega_T )$, $u_0\in W^{2-2/q,q}(\Omega)$, and $u$ is the   solution to the problem 
	  \begin{equation}	\nonumber
 \begin{cases}
		\partial_t u - d \Delta u = f, &\text{ in }  \Omega_T,\\
		\nabla u \cdot \nu = 0, &\text{ on } \Gamma_T,\\
		u(x,0) = u_0(x), &\text{ in } \Omega.
	\end{cases}
	\end{equation}
 Then, $u\in W^{(2,1)}_p(\Omega_T)$ and for 
	\begin{align*}
	p=\left\{ \begin{array}{lllllll}
	 \dfrac{(N+2)q}{N+2-2q}  & \text{if }  q<\frac{N+2}{2}, \vspace*{0.15cm}\\
	\in[1,\infty) \text{ arbitrary}  & \text{if }  q=\frac{N+2}{2}, \vspace*{0.15cm}\\
	\infty   & \text{if }  q>\frac{N+2}{2}, \vspace*{0.15cm}
	\end{array} 
	\right.
	\quad  
	r=\left\{ \begin{array}{lllllll}
	\dfrac{(N+2)q}{N+2-q}  & \text{if }  q<N+2, \vspace*{0.15cm}\\
	\in[1,\infty)\text{ arbitrary}  & \text{if }  q=N+2, \vspace*{0.15cm}\\
	\infty   & \text{if }  q>N+2 , \vspace*{0.15cm}
	\end{array} 
	\right.
	\end{align*}
we have 
\begin{align*}
	&\|u\|_{L^{p}(\Omega_T )} + \|\nabla u\|_{L^{r}(\Omega_T )^2} + \|\partial_t u\|_{L^{q}(\Omega_T )} + \|\Delta u\|_{L^{q}(\Omega_T )}  \\
 & \hspace{5cm} \le C_T \left(  \|f\| _{L^q(Q_{T})} + \| u_0\|_{W^{2-2/q,q}(\Omega)} \right) , \nonumber 
\end{align*} 
where $C_T$ depends only on $d,q,N,\Omega, T$. 
\end{lemma}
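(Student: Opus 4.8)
This is the standard parabolic $L^q$ maximal regularity estimate together with the anisotropic (parabolic) Sobolev embeddings, so my plan is to split the proof into two independent steps: first obtain the "base" regularity $u\in W^{(2,1)}_q(\Omega_T)$ with the a priori estimate controlling $\partial_t u$ and the full second-order spatial derivatives in $L^q$, and then upgrade to the finer Lebesgue integrability of $u$ and $\nabla u$ by an embedding with parabolic scaling. The exponents $p$ and $r$, and the three cases separating them, are exactly what this embedding produces, with the "parabolic dimension" being $N+2$ (time counting as two spatial derivatives).

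\textbf{Step 1: base $L^q$ regularity.} I would invoke the cited Theorem~9.1 of Chapter~IV of \cite{ladyzhenskaia1988linear} applied to the operator $\partial_t - d\Delta$ under homogeneous Neumann boundary data. The point to check is that the Neumann condition $\nabla u\cdot\nu=0$ satisfies the complementing (Lopatinskii--Shapiro) condition for this second-order parabolic operator and that $u_0\in W^{2-2/q,q}(\Omega)$ is precisely the trace space making the problem solvable in $W^{(2,1)}_q(\Omega_T)$. The theorem then yields a unique solution together with the estimate
\begin{equation*}
\|\partial_t u\|_{L^q(\Omega_T)} + \|D^2 u\|_{L^q(\Omega_T)} + \|u\|_{L^q(\Omega_T)} \le C_T\left(\|f\|_{L^q(\Omega_T)} + \|u_0\|_{W^{2-2/q,q}(\Omega)}\right),
\end{equation*}
where the constant depends on $d,q,N,\Omega,T$; in particular $\|\Delta u\|_{L^q}$ is controlled, and since the full Hessian is dominated via interior and boundary elliptic (Calder\'on--Zygmund) estimates, we indeed have $u\in W^{(2,1)}_q(\Omega_T)$. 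The $d$-dependence is harmless: either it is absorbed into the constant, or one rescales time by $d$ to reduce to $d=1$.

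\textbf{Step 2: parabolic Sobolev embedding.} I would then apply the anisotropic embedding for $W^{(2,1)}_q(\Omega_T)$ (the parabolic Sobolev lemma, e.g. Chapter~II of \cite{ladyzhenskaia1988linear}), in which a time derivative weighs as two spatial derivatives, so the relevant homogeneous dimension is $N+2$. Counting derivatives gives, in the subcritical regime, the gains
\begin{equation*}
\frac{1}{p} = \frac{1}{q} - \frac{2}{N+2}, \qquad \frac{1}{r} = \frac{1}{q} - \frac{1}{N+2},
\end{equation*}
which rearrange exactly to $p=\frac{(N+2)q}{N+2-2q}$ for $q<\frac{N+2}{2}$ and $r=\frac{(N+2)q}{N+2-q}$ for $q<N+2$. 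In the borderline cases $q=\frac{N+2}{2}$ (resp. $q=N+2$) the embedding holds into every finite $L^p$ (resp. $L^r$), and in the supercritical cases one lands in $C(\overline{\Omega_T})$, hence in $L^\infty$. Combining these embeddings with the Step~1 estimate gives the claimed bound on $\|u\|_{L^p} + \|\nabla u\|_{L^r} + \|\partial_t u\|_{L^q} + \|\Delta u\|_{L^q}$.

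\textbf{Main obstacle.} The substantive points are not the computation but the hypotheses of the invoked theorems: verifying that the Neumann boundary operator is admissible (the complementing condition) so that Theorem~9.1 applies in the form stated, and that $W^{2-2/q,q}(\Omega)$ is the exact compatibility/trace class for the initial datum; the borderline exponent cases also require care, since the embedding degenerates and one only gets arbitrary finite integrability rather than a single sharp exponent. Once these are in place, the two steps chain together and the estimate follows with the stated dependence of $C_T$ on $d,q,N,\Omega,T$.
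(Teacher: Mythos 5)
Your two-step argument (maximal $L^q$ regularity for the Neumann heat problem via Theorem 9.1 of Chapter IV of \cite{ladyzhenskaia1988linear}, then the anisotropic parabolic Sobolev embedding with homogeneous dimension $N+2$ to produce the exponents $p$ and $r$) is correct and is precisely the derivation the paper intends: the lemma is stated as a corollary of that theorem and no proof is given in the text. Your exponent arithmetic and the treatment of the borderline and supercritical cases match the statement exactly.
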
 
 
\begin{lemma}[Uniform regularity] \label{Lem.UniformRegularity}  There exists $\delta>0$  such that 
\begin{align}
\sup_{\varepsilon>0} \left( \|R_1^\varepsilon\|_{L^{2+\delta }(\Omega_T)} + \|R_2^\varepsilon\|_{L^{2+\delta }(\Omega_T)} \right) < \infty , \label{Bound.UniR1R2}
\end{align}
which implies
\begin{align}
\sup_{\varepsilon>0} \left(  \|\partial_t S^\varepsilon\|_{L^{2+\delta }(\Omega_T)} + \|\Delta S^\varepsilon\|_{L^{2+\delta }(\Omega_T)} + \|S^\varepsilon\|_{L^{\infty}(\Omega_T)} + \|\nabla S^\varepsilon\|_{L^{\frac{8+4\delta }{2-\delta }}(\Omega_T)^2} \right) < \infty . \label{Bound.UniT}
\end{align} 
\end{lemma}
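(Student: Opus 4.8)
The plan is to establish the two claimed bounds sequentially: first the $L^{2+\delta}$-bound on $R_1^\varepsilon, R_2^\varepsilon$ from \eqref{Bound.UniR1R2} via an improved duality argument applied uniformly in $\varepsilon$, and then to feed this into the equation for $S^\varepsilon$ to obtain \eqref{Bound.UniT} using the heat-regularisation estimate of Lemma \ref{Lem.HeatRegularisation} together with the comparison principle. The crucial feature throughout is that all constants must be $\varepsilon$-\emph{independent}, which is exactly what makes this lemma nontrivial compared to the per-$\varepsilon$ well-posedness already established in the preceding Proposition.

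For the first bound, I would add the equations for $R_1^\varepsilon$ and $R_2^\varepsilon$ in \eqref{System.OrginalSys}; the troublesome $\frac{1}{\varepsilon}$-terms cancel exactly, leaving
\begin{equation*}
\partial_t(R_1^\varepsilon + R_2^\varepsilon) = \Delta\bigl(d_{R_1}R_1^\varepsilon + d_{R_2}R_2^\varepsilon\bigr) + \sum_{i=1}^2\bigl(\gamma_i R_i^\varepsilon(\widehat R - R^\varepsilon) - \eta_i R_i^\varepsilon\bigr).
\end{equation*}
Discarding the (sign-favourable) quadratic logistic loss $-\gamma_i R_i^\varepsilon R^\varepsilon$ and the mortality $-\eta_i R_i^\varepsilon$, one obtains a differential inequality of the form $\partial_t(R_1^\varepsilon+R_2^\varepsilon) \le \Delta(d_{R_1}R_1^\varepsilon + d_{R_2}R_2^\varepsilon) + \max\{\gamma_1,\gamma_2\}\widehat R\,(R_1^\varepsilon+R_2^\varepsilon)$, which has precisely the structure to which the improved duality lemma of \cite{canizo2014improved} applies. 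Since the diffusion coefficients $d_{R_1}, d_{R_2}$ and the reaction constants do not depend on $\varepsilon$, the resulting constant is $\varepsilon$-uniform, giving \eqref{Bound.UniR1R2} with some $\delta>0$ depending only on the ellipticity ratio $d_{R_2}/d_{R_1}$, $\Omega$, $T$, and the initial data (bounded uniformly by Assumption \ref{Assumption.AllinOne}).

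For the second bound, I would view the $S^\varepsilon$-equation as a linear heat equation $\partial_t S^\varepsilon - d_S\Delta S^\varepsilon + \rho S^\varepsilon = \mu(\eta_1 R_1^\varepsilon + \eta_2 R_2^\varepsilon)$ with right-hand side bounded in $L^{2+\delta}(\Omega_T)$ uniformly in $\varepsilon$ by the first step. Applying Lemma \ref{Lem.HeatRegularisation} with $q = 2+\delta$ and $N=2$ immediately yields the $\varepsilon$-uniform bounds on $\|\partial_t S^\varepsilon\|_{L^{2+\delta}}$ and $\|\Delta S^\varepsilon\|_{L^{2+\delta}}$; the gradient exponent $\frac{(N+2)q}{N+2-q} = \frac{4(2+\delta)}{2-\delta} = \frac{8+4\delta}{2-\delta}$ matches the stated $\|\nabla S^\varepsilon\|$-norm exactly (noting $q = 2+\delta < 4 = N+2$). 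For the $L^\infty$-bound on $S^\varepsilon$, I would either invoke the comparison principle — since the nonnegative source $\mu(\eta_1 R_1^\varepsilon + \eta_2 R_2^\varepsilon)$ is controlled and the decay term $-\rho S^\varepsilon$ is favourable — or observe that $W^{2,1}_{2+\delta}(\Omega_T) \hookrightarrow L^\infty(\Omega_T)$ in dimension $N=2$ precisely because $2+\delta > \frac{N+2}{2} = 2$, so the embedding is available with room to spare. The one point demanding care, and the step I expect to be the main obstacle, is verifying that the constant in the improved duality lemma genuinely does not degenerate as $\varepsilon \to 0$; this hinges on the exact cancellation of the $\frac{1}{\varepsilon}$-terms in the summed equation, so that $\varepsilon$ never enters the parabolic problem to which duality is applied — a structural feature I would state explicitly.
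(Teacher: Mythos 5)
Your proposal is correct and follows essentially the same route as the paper: summing the $R_1^\varepsilon$, $R_2^\varepsilon$ equations so the $\tfrac{1}{\varepsilon}$-terms cancel, applying the improved duality lemma with $\varepsilon$-independent constants, and then bootstrapping the $S^\varepsilon$-equation via the comparison principle and Lemma \ref{Lem.HeatRegularisation} (the paper likewise first gets $\|S^\varepsilon\|_{L^\infty}$ by comparison before applying the full parabolic regularisation with $S_0\in W^{2,\infty}(\Omega)$). Your explicit check of the exponent $\tfrac{(N+2)q}{N+2-q}=\tfrac{8+4\delta}{2-\delta}$ and of $q=2+\delta>\tfrac{N+2}{2}$ matches the paper's (implicit) use of the same lemma.
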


\begin{proof} By adding the equations for $R_1^\varepsilon$, $R_2^\varepsilon$,  we have 
\begin{align*}
\partial_t(R_1^\varepsilon +R_2^\varepsilon) - \Delta (d_{R_1} R_1^\varepsilon + d_{R_2} R_2^\varepsilon) \le \gamma_1 R_1^\varepsilon   + \gamma_2 R_2^\varepsilon.    
\end{align*}
Since $R_{10}, R_{20}\in L^{\infty}(\Omega)$, thanks to improved duality, see e.g. \cite{canizo2014improved,einav2020indirect}, there exists $\delta>0$ such that $R_1^\varepsilon$, $R_2^\varepsilon$ are uniformly bounded in $L^{2+\delta }(\Omega_T)$, i.e.  \eqref{Bound.UniR1R2} holds. Taking this into account and then the comparison principle, we can apply the heat regularisation given by Lemma \ref{Lem.HeatRegularisation} and imply from  
\begin{align*}
\partial_t S^\varepsilon - d_S \Delta S^\varepsilon \le \mu(\eta_1R_1^\varepsilon + \eta_2 R_2^\varepsilon) 
\end{align*} 
that $S^\varepsilon$ is uniformly bounded in $L^{\infty}(\Omega_T)$. Therefore, we have a uniform bound in $L^{2+\delta }(\Omega_T)$ for the term  $\mu\big(\eta_1R_1^\varepsilon +  \eta_2 R_2^\varepsilon\big) - \rho S^\varepsilon$, which directly deduces the uniform estimate   \eqref{Bound.UniT} after applying the heat regularisation    with  $S_0\in W^{2,\infty}(\Omega)$. 
\end{proof}

\begin{lemma}[Lower bound for prey] \label{Lem.LowerBound} Assumption \eqref{Assumption.InitialData} ensures that 
\begin{align}
\inf_{\varepsilon>0} \left( \essinf\limits_{(x,t)\in   \Omega_T } S^\varepsilon(x,t) \right) \ge  s_0>0, \label{Estimate.T0LowerBound}
\end{align}
where $s_0:=e^{-\rho T} \inf_{x\in\Omega } S_0(x)$. 
\end{lemma}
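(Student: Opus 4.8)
The plan is to exploit the favourable sign structure of the $S^\varepsilon$-equation together with the comparison principle, using an explicit spatially homogeneous lower barrier. First I would recall that the solution components $R_1^\varepsilon, R_2^\varepsilon$ are nonnegative (this was established in the well-posedness proposition via the quasi-positivity condition), so the production term $\mu(\eta_1 R_1^\varepsilon + \eta_2 R_2^\varepsilon)$ is nonnegative. Rewriting the third equation of \eqref{System.OrginalSys} as
\[
\pa_t S^\varepsilon - d_S \Delta S^\varepsilon + \rho S^\varepsilon = \mu\big(\eta_1 R_1^\varepsilon + \eta_2 R_2^\varepsilon\big) \ge 0
\]
shows that $S^\varepsilon$ is a supersolution of the linear problem $\pa_t w - d_S\Delta w + \rho w = 0$ under homogeneous Neumann boundary conditions.

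Next I would introduce the candidate lower barrier $\underline{S}(t) := e^{-\rho t}\,\inf_{x\in\Omega} S_0(x)$, which is constant in space. A direct computation gives $\pa_t \underline{S} - d_S\Delta \underline{S} + \rho \underline{S} = 0$; moreover $\na \underline{S}\cdot\nu = 0$ holds trivially, and $\underline{S}(0) = \inf_\Omega S_0 \le S_0(x) = S^\varepsilon(x,0)$. Hence the difference $v := S^\varepsilon - \underline{S}$ satisfies $\pa_t v - d_S\Delta v + \rho v \ge 0$ in $\Omega_T$, together with $v(\cdot,0)\ge 0$ and homogeneous Neumann data.

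The key step is then the weak maximum principle. Since the zeroth-order coefficient $\rho\ge 0$ has the favourable sign, testing the inequality against the negative part $v^- := \max\{-v,0\}$ (or, equivalently, invoking the classical comparison principle, which is legitimate because $S^\varepsilon$ is smooth for positive time and continuous up to $t=0$) yields $v\ge 0$, that is, $S^\varepsilon(x,t)\ge e^{-\rho t}\,\inf_\Omega S_0$ for all $(x,t)\in\Omega_T$. Because $e^{-\rho t}$ is nonincreasing, this gives $S^\varepsilon(x,t)\ge e^{-\rho T}\,\inf_\Omega S_0 = s_0$ on $\Omega_T$, and Assumption \eqref{Assumption.InitialData} guarantees $s_0>0$.

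Finally, since the barrier $\underline{S}$ does not depend on $\varepsilon$ at all, exactly the same argument applies for every $\varepsilon>0$ with the identical bound, so the estimate is uniform and \eqref{Estimate.T0LowerBound} follows. I expect no serious obstacle here; the only point requiring minor care is the justification of the comparison step at the regularity available, which is handled by the smoothness of $S^\varepsilon$ for $t>0$ and its continuity up to the initial time, ensuring as well that the essential infimum is in fact a genuine pointwise infimum.
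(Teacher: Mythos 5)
Your proof is correct and follows essentially the same route as the paper: both rely on the nonnegativity of $R_1^\varepsilon, R_2^\varepsilon$ and the comparison principle for the $S^\varepsilon$-equation. The only (cosmetic) difference is that you compare directly with the explicit barrier $e^{-\rho t}\inf_\Omega S_0$, whereas the paper first introduces the intermediate subsolution $S_b^\varepsilon$ solving the linear equation with initial datum $S_0$ and then bounds that from below by the same exponential.
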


\begin{proof} By the comparison principle, for each $\varepsilon$, $S^\varepsilon$ is bounded from below by the solution  to 
 \begin{equation}	\nonumber
 \begin{cases}
		\partial_t S_b^\varepsilon  - d_S \Delta S_b^\varepsilon  = -\rho S_b^\varepsilon , &\text{ in }  \Omega_T,\\
		\nabla S_b^\varepsilon  \cdot \nu = 0, &\text{ on } \Gamma_T,\\
		S_b^\varepsilon  (x,0) = S_0(x), &\text{ in } \Omega,
	\end{cases}
	\end{equation}
where we recall the non-negativity of $R_1^\varepsilon$, $R_2^\varepsilon$. Therefore,  
$$ 
S^\varepsilon \ge S_b^\varepsilon \ge e^{-\rho T} \inf_{x\in\Omega } S_0(x) >0$$ 
due to  Assumption \eqref{Assumption.InitialData} and basic estimate for linear heat equations.  
\end{proof}

\subsection{Bootstrap via energy estimate}

We will propose a bootstrap argument to improve the uniform regularity of the $\varepsilon$-solution and its gradients, and to get an important estimate for the fast reaction term. The argument is based on studying the energy function \eqref{Function.Energy}, the heat regularisation given by Lemma \ref{Lem.HeatRegularisation},  and interpolation estimates.  

\begin{lemma}[Energy estimate] 
\label{Lem.EnergyEstimate}
For all $1<p<\infty$, it holds that  
\begin{equation}
\begin{aligned}
  \frac{d}{dt}E_p^\varepsilon(t)  &  + C_{d_{R_1},p} \int_\Omega  |\nabla (R_1^\varepsilon)^{\frac{p}{2}} |^2 + C_{d_{R_2},p} \int_\Omega |\nabla (R_2^\varepsilon)^{\frac{p}{2}} |^2  +  \frac{1}{\eps} \int_\Omega  Q_p^\varepsilon \\
&\le C_{p,T} \left( E_p^\varepsilon(t) +  \int_\Omega  (R_1^\varepsilon)^{p} |\nabla S^\varepsilon|^2  +  \int_\Omega (R^\varepsilon)^{p+1} \right) ,  
\end{aligned} \label{Estimate.Energy}
\end{equation}
where 
\begin{align*}
Q_p^\varepsilon:= \Big( (f(S^\varepsilon) R_1^\varepsilon )^{p-1} -
\big( g(S^\varepsilon) R_2^\varepsilon \big)^{p-1} \Big) \Big( f(S^\varepsilon)   R_1^\varepsilon   - g(S^\varepsilon)  R_2^\varepsilon) \Big). 
\end{align*}
Moreover, the constant $C_{p,T}$ depends on $p$ and remains finite for finite values of $p$.
\end{lemma}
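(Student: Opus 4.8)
The plan is to differentiate $E_p^\varepsilon$ directly, which is tantamount to testing the $R_1^\varepsilon$-equation with $(f(S^\varepsilon)R_1^\varepsilon)^{p-1}$ and the $R_2^\varepsilon$-equation with $(g(S^\varepsilon)R_2^\varepsilon)^{p-1}$ and adding, while carefully retaining the contributions coming from the time-dependence of the weights $f(S^\varepsilon)^{p-1},\,g(S^\varepsilon)^{p-1}$. The decisive structural feature is that the two fast-reaction terms, which enter the $R_1^\varepsilon$- and $R_2^\varepsilon$-equations with opposite signs, recombine into $-\tfrac{p}{\varepsilon}\int_\Omega Q_p^\varepsilon$. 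Since $t\mapsto t^{p-1}$ is nondecreasing on $[0,\infty)$ for $p>1$, the factors $\big((f(S^\varepsilon)R_1^\varepsilon)^{p-1}-(g(S^\varepsilon)R_2^\varepsilon)^{p-1}\big)$ and $\big(f(S^\varepsilon)R_1^\varepsilon-g(S^\varepsilon)R_2^\varepsilon\big)$ carry the same sign, so $Q_p^\varepsilon\ge 0$; as $p>1$ this gives $\tfrac{p}{\varepsilon}\int_\Omega Q_p^\varepsilon\ge\tfrac1\varepsilon\int_\Omega Q_p^\varepsilon$, and I keep the weaker quantity on the left-hand side.

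For the diffusion terms I would integrate by parts using the homogeneous Neumann condition, writing (with the convention $w_1:=f(S^\varepsilon)$, $w_2:=g(S^\varepsilon)$) the identity $(R_i^\varepsilon)^{p-1}\nabla R_i^\varepsilon=\tfrac{2}{p}(R_i^\varepsilon)^{p/2}\nabla(R_i^\varepsilon)^{p/2}$. The diagonal part produces the coercive contributions $-\tfrac{4(p-1)}{p}d_{R_i}\int_\Omega w_i^{p-1}|\nabla(R_i^\varepsilon)^{p/2}|^2$. Here I would invoke Lemma \ref{Lem.LowerBound} together with $f'(0)>0$: monotonicity and $f'(0)>0$ force $f(S^\varepsilon)\ge f(s_0)>0$, while $g$ decreasing gives $g(S^\varepsilon)\ge g(\sup_{\Omega_T}S^\varepsilon)>0$, so with the uniform $L^\infty$-bound on $S^\varepsilon$ from Lemma \ref{Lem.UniformRegularity} one obtains a two-sided bound $0<c_0\le w_i\le C_0$ uniformly in $\varepsilon$. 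This converts the weighted dissipation into the clean terms $C_{d_{R_i},p}\int_\Omega|\nabla(R_i^\varepsilon)^{p/2}|^2$ of the statement. Differentiating the weights in space generates off-diagonal products $\int_\Omega(\cdots)(R_i^\varepsilon)^{p/2}\,\nabla S^\varepsilon\cdot\nabla(R_i^\varepsilon)^{p/2}$, where $(\cdots)$ is a bounded function of $S^\varepsilon$ built from $f,f'$ resp.\ $g,g'$; I would split these by Young's inequality, absorbing a small multiple of $\int_\Omega|\nabla(R_i^\varepsilon)^{p/2}|^2$ into the coercive term and leaving behind terms of the type $\int_\Omega(R_i^\varepsilon)^p|\nabla S^\varepsilon|^2$.

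The terms created by the time-dependence of the weights are $(p-1)\int_\Omega f^{p-2}f'\,(R_1^\varepsilon)^p\,\partial_t S^\varepsilon$ and its $g$-analogue. Here I would substitute $\partial_t S^\varepsilon=d_S\Delta S^\varepsilon+\mu(\eta_1R_1^\varepsilon+\eta_2R_2^\varepsilon)-\rho S^\varepsilon$: integrating the $d_S\Delta S^\varepsilon$ piece by parts once more yields only bounded-coefficient mixed products, treated exactly as above to produce $\int_\Omega(R_i^\varepsilon)^p|\nabla S^\varepsilon|^2$ plus absorbable gradients, with no surviving $\Delta S^\varepsilon$; the reaction piece $\mu(\eta_1R_1^\varepsilon+\eta_2R_2^\varepsilon)$, carrying a bounded weight, is dominated by $C\int_\Omega(R^\varepsilon)^{p+1}$, and the $-\rho S^\varepsilon$ piece by $C E_p^\varepsilon$. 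The sign hypotheses $f'\ge 0,\ g'\le 0$ are precisely what let me discard the unfavourable contributions: e.g.\ the reaction piece weighted by $g'\le 0$ already has the good sign and is dropped. Finally the logistic terms $\gamma_i R_i^\varepsilon(\widehat R-R^\varepsilon)-\eta_i R_i^\varepsilon$ tested against the nonnegative weights give $-\gamma_i\int_\Omega w_i^{p-1}(R_i^\varepsilon)^pR^\varepsilon-\eta_i\int_\Omega w_i^{p-1}(R_i^\varepsilon)^p\le 0$, which I drop, while $\gamma_i\widehat R\int_\Omega w_i^{p-1}(R_i^\varepsilon)^p\le C_{p,T}E_p^\varepsilon$. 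Collecting all surviving contributions gives \eqref{Estimate.Energy}.

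The main obstacle is the simultaneous treatment of the mixed gradient products $\nabla S^\varepsilon\cdot\nabla R_i^\varepsilon$ and the weight time-derivative terms while preserving a genuinely coercive dissipation, since one must not lose positivity of the weighted Dirichlet form; this is exactly where the uniform two-sided bound $0<c_0\le f(S^\varepsilon),g(S^\varepsilon)\le C_0$ (Lemmas \ref{Lem.UniformRegularity} and \ref{Lem.LowerBound}) and the boundedness of $f',f'',g',g''$ evaluated along $S^\varepsilon$ are indispensable. I would emphasise that the nonlinear quantities $\int_\Omega(R_i^\varepsilon)^p|\nabla S^\varepsilon|^2$ are deliberately left unestimated at this stage and kept on the right-hand side, to be absorbed later in the bootstrap argument by interpolation; and that $C_{p,T}$ stays finite for finite $p$ because every weight factor is controlled by a fixed power of $C_0$ and $1/c_0$ and the structural constants $\tfrac{p-1}{p},\ \tfrac{4(p-1)}{p}$ are finite for finite $p$.
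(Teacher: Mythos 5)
Your proposal is correct and follows essentially the same route as the paper: differentiate $E_p^\varepsilon$, recombine the fast-reaction contributions into $-\tfrac{p}{\varepsilon}\int_\Omega Q_p^\varepsilon$ with $Q_p^\varepsilon\ge 0$ by monotonicity of $t\mapsto t^{p-1}$, integrate the diffusion terms by parts, substitute the $S^\varepsilon$-equation for $\partial_t f(S^\varepsilon)$ and $\partial_t g(S^\varepsilon)$, and control everything via the uniform two-sided bounds on $f(S^\varepsilon),g(S^\varepsilon)$ from Lemmas \ref{Lem.UniformRegularity} and \ref{Lem.LowerBound} together with Young's inequality to absorb the mixed gradient terms. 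The only differences are cosmetic bookkeeping (you pass to $\nabla(R_i^\varepsilon)^{p/2}$ immediately, and you are slightly more explicit about where $f'\ge 0$ and $g'\le 0$ are used to discard unfavourable sign contributions), so the argument matches the paper's proof.
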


\begin{proof}    By using the equations for $R_1^\varepsilon$ and $R_2^\varepsilon$, the following expression is straightforward   
\begin{align*}
\frac{dE_p^\varepsilon}{dt}	
	&=    (p-1) \int_\Omega  \Big( f(S^\varepsilon)^{p-2} (R_1^\varepsilon)^p \partial_t f(S^\varepsilon) +  g(S^\varepsilon)^{p-2} (R_2^\varepsilon)^p \partial_t g(S^\varepsilon) \Big) \\ 
	& +  p \int_\Omega  \Big( d_{R_1}  ( f(S^\varepsilon) R_1^\varepsilon)^{p-1} \Delta R_1^\varepsilon +  d_{R_2} \big( g(S^\varepsilon) R_2^\varepsilon \big)^{p-1} \Delta R_2^\varepsilon   \Big) \\
&+  p\int_\Omega  ( f(S^\varepsilon) R_1^\varepsilon)^{p-1} \Big( \gamma_1 R_1^\varepsilon(\Rhat- R^\varepsilon)  - \eta_1 R_1^\varepsilon \Big)  \\
&+    p \int_\Omega \big(g(S^\varepsilon) R_2^\varepsilon\big)^{p-1} \Big( \gamma_2
R_2^\varepsilon(\Rhat- R^\varepsilon) - \eta_2 R_2^\varepsilon \Big) \\
& -  \frac{p}{\eps  } \int_\Omega  \Big( (f(S^\varepsilon) R_1^\varepsilon )^{p-1} -
\big( g(S^\varepsilon) R_2^\varepsilon \big)^{p-1} \Big) \Big( f(S^\varepsilon)   R_1^\varepsilon   - g(S^\varepsilon)  R_2^\varepsilon) \Big) .
\end{align*} 
Then, integration by parts shows
\begin{equation}
\begin{aligned}
 \frac{1}{p-1}\frac{dE_p^\varepsilon}{dt}  	& + \frac{p}{p-1}  \frac{1}{\eps} \int_\Omega \Big( (f(S^\varepsilon) R_1^\varepsilon )^{p-1} -
\big( g(S^\varepsilon) R_2^\varepsilon \big)^{p-1} \Big) \Big( f(S^\varepsilon)   R_1^\varepsilon   - g(S^\varepsilon)  R_2^\varepsilon) \Big)  \\
& +  
p d_{R_1} \int_\Omega  f(S^\varepsilon)^{p-1} (R_1^\varepsilon)^{p-2} |\nabla R_1^\varepsilon |^2  
 + p d_{R_2} \int_\Omega  g(S^\varepsilon)^{p-1} (R_2^\varepsilon)^{p-2} |\nabla R_2^\varepsilon |^2   \\
& =   \int_\Omega f(S^\varepsilon)^{p-2} (R_1^\varepsilon)^p \partial_t f(S^\varepsilon)  -  p d_{R_1} \int_\Omega f(S^\varepsilon)^{p-2} (R_1^\varepsilon)^{p-1}  \nabla f(S^\varepsilon) \cdot \nabla R_1^\varepsilon    \\ 
 & +   \int_\Omega g(S^\varepsilon)^{p-2} (R_2^\varepsilon)^p \partial_t g(S^\varepsilon)   -  p	  d_{R_2} \int_\Omega  g(S^\varepsilon)^{p-2} (R_2^\varepsilon)^{p-1} \nabla  g(S^\varepsilon) \cdot \nabla R_2^\varepsilon   \\
 & + \frac{p}{p-1} \int_\Omega (f(S^\varepsilon) R_1^\varepsilon )^{p-1} \Big( \gamma_1 R_1^\varepsilon(\Rhat- R^\varepsilon)  - \eta_1 R_1^\varepsilon \Big)  \\
& + \frac{p}{p-1} \int_\Omega  (g(S^\varepsilon) R_2^\varepsilon)^{p-1} \Big( \gamma_2
R_2^\varepsilon(\Rhat- R^\varepsilon) - \eta_2 R_2^\varepsilon \Big). 
\end{aligned} \label{Calculate.EnergyStep1}  
\end{equation}
We will estimate terms on the right-hand side. Due to Lemmas \ref{Lem.UniformRegularity}, \ref{Lem.LowerBound}, 
\begin{align}
0< s_0 \le S^\varepsilon(x,t) \le s_\infty<\infty ,  \label{Estimate.TwoSideBoundofSeps}
\end{align}
where $s_\infty:=\sup_{\varepsilon>0}\|S^\varepsilon\|_{L^\infty(\Omega_T)}$. 
By Assumption \ref{Assumption.AllinOne} we have $f(s_\infty)\ge f(s_0)>0$ and $g(s_0)\ge g(s_\infty)>0$. Hence, the assumption $f,g \in C^2(\R_+)$ guarantees      
\begin{equation}
\begin{aligned}
f(S^\varepsilon)^{k} \ge \sup_{s_0\le s\le s_\infty} f(s)^{k} &\ge  \min\{f(s_0)^{k};f(s_\infty)^{k}\} \ge C_{k,T} >0, \\
g(S^\varepsilon)^{k} \ge \sup_{s_0\le s\le s_\infty} g(s)^{k} &\ge  \min\{g(s_0)^{k};g(s_\infty)^{k}\} \ge C_{k,T} >0, 
\end{aligned}
\label{Calculate.EnergyStep2a}  
\end{equation}
for $k\in \mathbb{R}$, and for the derivatives of $f$ and $g$
\begin{align*}
|f^{(l)}(S^\varepsilon)| \le \sup_{s_0\le s\le s_\infty} |f^{(l)}(s)|  &\le   \sup_{s_0\le s\le s_\infty} |f^{(l)}(s)| \le C_T, \\
|g^{(l)}(S^\varepsilon)| \le \sup_{s_0\le s\le s_\infty} |g^{(l)}(s)|  &\le   \sup_{s_0\le s\le s_\infty} |g^{(l)}(s)| \le C_T,
\end{align*}
for $l\in \{0;1;2\}$. Therefore, we can employ the equation for $S^\varepsilon$ to estimate the terms including $\partial_t f(
S^\varepsilon)$ and $\partial_t g(S^\varepsilon)$. Indeed,   
\begin{equation}
\label{Calculate.EnergyStep2}  
\begin{aligned}
&  \int_\Omega  f(S^\varepsilon)^{p-2} (R_1^\varepsilon)^p \partial_t f(S^\varepsilon) =   \int_\Omega  f(S^\varepsilon)^{p-2} (R_1^\varepsilon)^p  f'(S^\varepsilon)\partial_t S^\eps \\ 
& = - pd_{S} \int_\Omega    f(S^\varepsilon)^{p-2} (R_1^\varepsilon)^{p-1} \nabla f(S^\varepsilon) \cdot \nabla R_1^\varepsilon  -  d_{S} \int_\Omega f(S^\varepsilon)^{p-2} f''(S^\varepsilon) (R_1^\varepsilon)^p |\nabla   S^\varepsilon |^2  \\
& - (p-2) d_{S}  \int_\Omega f(S^\varepsilon)^{p-3}     (R_1^\varepsilon)^{p} |\nabla f(S^\varepsilon)|^2  + \int_\Omega f(S^\varepsilon)^{p-2} f'(S^\varepsilon)  (R_1^\varepsilon)^p I(x,t)  \\
&\le \frac{pd_{R_1}      }{4} \int_\Omega f(S^\varepsilon)^{p-1} (R_1^\varepsilon)^{p-2} |\nabla R_1^\varepsilon|^2 +  \mu \eta_2  \int_\Omega f(S^\varepsilon)^{p-2} f'(S^\varepsilon) (R^\varepsilon)^{p+1}   \\
&+ \left( d_{S} + |p-2|d_{S} + \frac{ pd_S^2}{d_{R_2}} 
\right) \left( \sup_{\varepsilon>0} \left( \frac{|f'(S^\varepsilon)|^2}{f(S^\varepsilon)^{3-p}} +  \frac{|f''(S^\varepsilon)|}{f(S^\varepsilon)^{2-p}} \right) \right) \int_\Omega  (R_1^\varepsilon)^{p} |\nabla S^\varepsilon|^2  \\
&\le \frac{pd_{R_1}      }{4} \int_\Omega f(S^\varepsilon)^{p-1} (R_1^\varepsilon)^{p-2} |\nabla R_1^\varepsilon|^2 +  C_{p,T}  \int_\Omega (R^\varepsilon)^{p+1} + C_{p,T} \int_\Omega  (R_1^\varepsilon)^{p} |\nabla S^\varepsilon|^2 ,
\end{aligned}
\end{equation}
where we note that $I(x,t) :=\mu  (\eta_1R_1^\varepsilon +   \eta_2 R_2^\varepsilon ) - \rho S^\varepsilon \le \mu \eta_2 R^\varepsilon$. An estimate for the term containing $\partial_t g(S^\varepsilon)$ can be obtained similarly. 

\medskip

On the other hand,    
\begin{equation}
\begin{aligned}
& -  p d_{R_1} \int_\Omega f(S^\varepsilon)^{p-2} (R_1^\varepsilon)^{p-1} \nabla f(S^\varepsilon) \nabla R_1^\varepsilon -  p d_{R_2} \int_\Omega g(S^\varepsilon)^{p-2} (R_2^\varepsilon)^{p-1} \nabla g(S^\varepsilon) \cdot \nabla R_2^\varepsilon\\
& \hspace*{0.8cm} \le \frac{pd_{R_1}}{4} \int_\Omega f(S^\varepsilon)^{p-1} (R_1^\varepsilon)^{p-2} |\nabla R_1^\varepsilon|^2 + \left( \sup_{\varepsilon>0}  \frac{|f'(S^\varepsilon)|^2}{f(S^\varepsilon)^{3-p}} \right)  \int_\Omega  (R_1^\varepsilon)^{p}   | \nabla S^\varepsilon|^2 \\
& \hspace*{0.8cm} + \frac{pd_{R_2}}{4} \int_\Omega g(S^\varepsilon)^{p-1} (R_2^\varepsilon)^{p-2} |\nabla R_2^\varepsilon|^2 + \left( \sup_{\varepsilon>0}  \frac{|g'(S^\varepsilon)|^2}{g(S^\varepsilon)^{3-p}} \right)  \int_\Omega  (R_2^\varepsilon)^{p}   | \nabla S^\varepsilon|^2 \\
& \hspace*{0.8cm} \le \frac{pd_{R_1}}{4} \int_\Omega f(S^\varepsilon)^{p-1} (R_1^\varepsilon)^{p-2} |\nabla R_1^\varepsilon|^2 + C_{p,T} \int_\Omega  (R_1^\varepsilon)^{p}   | \nabla S^\varepsilon|^2 \\
& \hspace*{0.8cm} + \frac{pd_{R_2}}{4} \int_\Omega g(S^\varepsilon)^{p-1} (R_2^\varepsilon)^{p-2} |\nabla R_2^\varepsilon|^2 + C_{p,T} \int_\Omega  (R_2^\varepsilon)^{p}   | \nabla S^\varepsilon|^2. 
 \end{aligned}
\label{Calculate.EnergyStep3}  
\end{equation}
Finally, it is obvious that the sum of the last two terms in \eqref{Calculate.EnergyStep1} is bounded by 
$\frac{p \gamma_1 \Rhat}{p-1} E_p^\varepsilon(t)$ by skipping the negative parts. A combination of all estimates above gives 
\begin{align*}
  \frac{dE_p^\varepsilon}{dt} & + C_{d_{R_1},p} \int_\Omega   f(S^\varepsilon)^{p-1} (R_1^\varepsilon)^{p-2} |\nabla R_1^\varepsilon |^2 + C_{d_{R_2},p} \int_\Omega   g(S^\varepsilon)^{p-1} (R_2^\varepsilon)^{p-2} |\nabla R_2^\varepsilon |^2    \\
& +  \frac{1}{\eps} \int_\Omega  \Big( (f(S^\varepsilon) R_1^\varepsilon )^{p-1} -
\big( g(S^\varepsilon) R_2^\varepsilon \big)^{p-1} \Big) \Big( f(S^\varepsilon)   R_1^\varepsilon   - g(S^\varepsilon)  R_2^\varepsilon) \Big) \\
&\le C_{p,T} \left( E_p^\varepsilon +  \int_\Omega  (R_1^\varepsilon)^{p} |\nabla S^\varepsilon|^2  +  \int_\Omega (R^\varepsilon)^{p+1} \right) ,  
\end{align*}
and therefore the estimate \eqref{Estimate.Energy} by noting \eqref{Calculate.EnergyStep2a}. 
\end{proof}

With a glance at the energy estimate \eqref{Estimate.Energy}, we cannot take $p>1+\delta$ since the uniform regularity of $R^\varepsilon$ is just given in $L^{2+\delta}(\Omega_T)$ (Lemma \ref{Lem.UniformRegularity}). However, better feedback can be obtained via an appropriate interpolation.  Indeed, by Lemma \ref{Lem.UniformRegularity}, the uniform regularity of $\nabla S^\varepsilon$ is obtained in $L^q(\Omega_T)$ with $q>4+2\delta$. This will allow $p$ to be chosen strictly greater than $1+\delta$. This feedback will be presented precisely in the two next lemmas.

\begin{lemma}[Feedback from energy estimate] \label{Lem.Feedback} If there exists $\alpha>0$ such that  
\begin{align}
\sup_{\varepsilon>0}   \| R^\varepsilon \|_{L^{2+\alpha}(\Omega_T)} \le C_T, 
\label{Condition.Feedback}
\end{align}
then 
\begin{align*}
\sup_{\varepsilon>0}   \| R^\varepsilon \|_{L^{2+2\alpha}(\Omega_T)} \le C_{\alpha,T}.
\end{align*}
\end{lemma}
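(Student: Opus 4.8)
The plan is to feed the hypothesis \eqref{Condition.Feedback} into the energy estimate \eqref{Estimate.Energy} with the choice $p=1+\alpha$, and to convert the resulting control of $E_{1+\alpha}^\varepsilon$ and of its dissipation into one extra power of integrability through a two–dimensional interpolation. Throughout I write $w_i:=(R_i^\varepsilon)^{p/2}$, so that $\int_\Omega|\na(R_i^\varepsilon)^{p/2}|^2=\int_\Omega|\na w_i|^2$ and, by the two–sided bounds \eqref{Calculate.EnergyStep2a} on $f(S^\varepsilon),g(S^\varepsilon)$, one has $E_p^\varepsilon(t)\simeq\sum_{i=1}^2\|w_i(t)\|_{L^2(\Omega)}^2$ uniformly in $\varepsilon$. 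Since $Q_p^\varepsilon\ge0$ (it has the form $(a^{p-1}-b^{p-1})(a-b)$ with $a,b\ge0$, $p-1>0$), I drop the fast–reaction term and keep the dissipation on the left. Of the three terms on the right of \eqref{Estimate.Energy}, the term $\int_\Omega(R^\varepsilon)^{p+1}=\int_\Omega(R^\varepsilon)^{2+\alpha}$ integrates in time to $\|R^\varepsilon\|_{L^{2+\alpha}(\Omega_T)}^{2+\alpha}$, bounded by hypothesis, and $C_{p,T}E_p^\varepsilon$ is harmless for Gronwall. The genuine obstacle is the mixed term $\int_\Omega(R_1^\varepsilon)^p|\na S^\varepsilon|^2=\int_\Omega w_1^2|\na S^\varepsilon|^2$: a crude Hölder bound would require exactly $R_1^\varepsilon\in L^{2+2\alpha}$, the very conclusion sought, so it must instead be absorbed into the dissipation.

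To carry out the absorption I would estimate, pointwise in $t$,
\[
\int_\Omega w_1^2|\na S^\varepsilon|^2\le\|\na S^\varepsilon\|_{L^q(\Omega)}^2\,\|w_1\|_{L^{2q/(q-2)}(\Omega)}^2,
\]
where $q:=\tfrac{8+4\delta}{2-\delta}$ is the exponent from Lemma \ref{Lem.UniformRegularity}, and then use the two–dimensional Gagliardo–Nirenberg inequality $\|w_1\|_{L^{2q/(q-2)}}\le C\|\na w_1\|_{L^2}^{\theta}\|w_1\|_{L^2}^{1-\theta}+C\|w_1\|_{L^2}$ with $\theta=2/q$. Young's inequality, with the small parameter absorbed into the coefficient $C_{d_{R_1},p}$ of $\int_\Omega|\na w_1|^2$ in \eqref{Estimate.Energy}, yields
\[
\int_\Omega w_1^2|\na S^\varepsilon|^2\le\tfrac12 C_{d_{R_1},p}\int_\Omega|\na w_1|^2+C\,\|\na S^\varepsilon\|_{L^q(\Omega)}^{\,2q/(q-2)}\,\|w_1\|_{L^2(\Omega)}^2,
\]
the first summand being absorbed on the left and $\|w_1\|_{L^2}^2\lesssim E_p^\varepsilon$. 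This leaves a differential inequality of the form $\tfrac{d}{dt}E_p^\varepsilon\le C\big(1+\|\na S^\varepsilon\|_{L^q(\Omega)}^{2q/(q-2)}\big)E_p^\varepsilon+C\int_\Omega(R^\varepsilon)^{2+\alpha}$. The crux is that the time–weight $\|\na S^\varepsilon(\cdot)\|_{L^q(\Omega)}^{2q/(q-2)}$ lies in $L^1(0,T)$ uniformly in $\varepsilon$: because $q>4$ (which holds precisely since $\delta>0$), the exponent $m:=2q/(q-2)$ satisfies $m<q$, so $\na S^\varepsilon\in L^q(\Omega_T)$ gives, by Hölder in time, $\int_0^T\|\na S^\varepsilon\|_{L^q(\Omega)}^{m}\,dt\le T^{1-m/q}\|\na S^\varepsilon\|_{L^q(\Omega_T)}^{m}\le C_T$. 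Gronwall's lemma then bounds $\sup_{t\in[0,T]}E_p^\varepsilon(t)$, and re–integrating the inequality bounds $\sum_i\|\na w_i\|_{L^2(\Omega_T)}^2$; here $w_2$ needs no absorption since the mixed term involves only $w_1$.

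Finally I would combine these two uniform bounds via the two–dimensional Ladyzhenskaya interpolation $\|w_i\|_{L^4(\Omega_T)}^4\le C\,\|w_i\|_{L^\infty(0,T;L^2(\Omega))}^2\,\|w_i\|_{L^2(0,T;H^1(\Omega))}^2$, whose right–hand side is finite by the previous step. Unwinding $w_i=(R_i^\varepsilon)^{(1+\alpha)/2}$ gives $\|w_i\|_{L^4(\Omega_T)}^4=\|R_i^\varepsilon\|_{L^{2+2\alpha}(\Omega_T)}^{2+2\alpha}$, hence $\|R_i^\varepsilon\|_{L^{2+2\alpha}(\Omega_T)}\le C_{\alpha,T}$ uniformly in $\varepsilon$, and summing over $i=1,2$ yields the claim. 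The single delicate point is the absorption of $\int_\Omega(R_1^\varepsilon)^p|\na S^\varepsilon|^2$; everything hinges on the strict gain $q>4$ in Lemma \ref{Lem.UniformRegularity}, which is exactly what makes the Gronwall weight integrable in time.
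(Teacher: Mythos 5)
Your proof is correct, and it follows the paper's overall skeleton (energy estimate with $p=1+\alpha$, Gr\"onwall, then the 2D Ladyzhenskaya interpolation $L^\infty_tL^2_x\cap L^2_tH^1_x\hookrightarrow L^4_{x,t}$ applied to $(R_i^\varepsilon)^{p/2}$ to double the integrability). Where you genuinely diverge is at the step you correctly identify as the crux, the mixed term $\int_\Omega(R_1^\varepsilon)^p|\na S^\varepsilon|^2$. The paper first \emph{re-invests} the hypothesis \eqref{Condition.Feedback} into the $S^\varepsilon$-equation: since $\mu(\eta_1R_1^\varepsilon+\eta_2R_2^\varepsilon)-\rho S^\varepsilon$ is uniformly bounded in $L^{2+\alpha}(\Omega_T)$, Lemma \ref{Lem.HeatRegularisation} upgrades $\na S^\varepsilon$ to $L^{4(2+\alpha)/(2-\alpha)}(\Omega_T)\subset L^{2(2+\alpha)}(\Omega_T)$, after which a single space--time H\"older bound with exponents $\tfrac{2+\alpha}{1+\alpha}$ and $2+\alpha$ controls $\iint(R^\varepsilon)^{1+\alpha}|\na S^\varepsilon|^2$ by an absolute constant --- no absorption into the dissipation is needed. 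You instead keep the fixed base regularity $\na S^\varepsilon\in L^q(\Omega_T)$ with $q=\tfrac{8+4\delta}{2-\delta}>4$ from Lemma \ref{Lem.UniformRegularity}, estimate in space only, and absorb the gradient contribution via Gagliardo--Nirenberg and Young, leaving a Gr\"onwall weight $\|\na S^\varepsilon(\cdot)\|_{L^q(\Omega)}^{2q/(q-2)}$ that is uniformly in $L^1(0,T)$ precisely because $q>4$. Both arguments are sound; your version has the mild advantage that the exponent of the weight does not change along the bootstrap (no fresh application of maximal regularity at each iteration), at the cost of a slightly more technical absorption, while the paper's version is cleaner at each step but must rerun Lemma \ref{Lem.HeatRegularisation} with the improved source at every iteration. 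Your identification of $Q_p^\varepsilon\ge0$, the equivalence $E_p^\varepsilon\simeq\sum_i\|(R_i^\varepsilon)^{p/2}\|_{L^2}^2$ via \eqref{Calculate.EnergyStep2a}, and the final unwinding $\|(R_i^\varepsilon)^{p/2}\|_{L^4(\Omega_T)}^4=\|R_i^\varepsilon\|_{L^{2+2\alpha}(\Omega_T)}^{2+2\alpha}$ all match the paper.
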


\begin{proof} With the uniform boundedness of $S^\varepsilon$ in $L^\infty(\Omega_T)$ as given in Lemma \ref{Lem.UniformRegularity}  and of $R^\varepsilon$ in $L^{2+\alpha}(\Omega_T)$ as \eqref{Condition.Feedback}, the quantity $\mu(\eta_1R_1^\varepsilon+\eta_2R_2^\varepsilon)-\rho S^\varepsilon$ is uniformly bounded in $L^{2+\alpha}(\Omega_T)$. Hence, from the equation for $S^\varepsilon$,  an application of the heat regularisation (Lemma \ref{Lem.HeatRegularisation})  yields   
$$\nabla S^\varepsilon \in L^{\frac{4(2+\alpha)}{2-\alpha}}(\Omega_T)^2 \subset L^{2(2+\alpha)}(\Omega_T)^2 .$$ 
By the H\"older inequality,  
\begin{align*}
\dintt{} (R^\varepsilon)^{1+\alpha} |\nabla S^\varepsilon|^2 
\le 
\left( \dintt{} (R^\varepsilon)^{2+\alpha} \right)^{\frac{1+\alpha}{2+\alpha}} \left( \dintt{} |\nabla S^\varepsilon|^{2(2+\alpha)} \right)^{\frac{1}{2+\alpha}} \le  C_{\alpha,T} .  
\end{align*}
Therefore, from the energy estimate \eqref{Estimate.Energy}, we get
\begin{gather*}
E_{p}^\varepsilon(t)    +  \dintt{}  |\nabla (R_1^\varepsilon)^{\frac{p}{2}} |^2 +  \dintt{} |\nabla (R_2^\varepsilon)^{\frac{p}{2}} |^2  +  \frac{1}{\eps} \dintt{}  Q_{p}^\varepsilon  
 \le C_{p,T,E_p(0)} \left(1  +  \int_0^t E_p^\varepsilon(s) \right),   
\end{gather*}
which holds for $p=1+\alpha$. Here, we note that the initial value $E_p(0)$ is finite due to Assumption \ref{Assumption.AllinOne}.    Now, the Gr\"onwall inequality can be applied to see that $\sup_{\varepsilon>0} E_p^\varepsilon(t)$ is  bounded on $(0,T)$. Consequently, we obtain 
\begin{gather*}
\|R^\varepsilon\|_{L^\infty(0,T; L^p(\Omega))}^p  +  \dint{} |\nabla (R_1^\varepsilon)^{\frac{p}{2}} |^2 +  \dint{} |\nabla (R_2^\varepsilon)^{\frac{p}{2}} |^2  
 \le  C_{p,T,E_p(0)},  
\end{gather*}
or more clearly, the terms  $(R_1^\varepsilon)^{p/2}, (R_2^\varepsilon)^{p/2}$ are uniformly bounded in $L^\infty(0,T;L^2(\Omega)) \cap L^2(0,T;H^1(\Omega))$.   
Thus, by the Gagliardo-Nirenberg  inequality,  
\begin{align*}
\| (R^\varepsilon)^{\frac{p}{2}} \|_{L^4(\Omega)} \le \| \nabla (R^\varepsilon)^{\frac{p}{2}} \|_{L^2(\Omega)}^{\frac{1}{2}} \| (R^\varepsilon)^{\frac{p}{2}} \|_{L^2(\Omega)}^{\frac{1}{2}}     ,
\end{align*}
which implies
\begin{align*}
\| (R^\varepsilon)^{\frac{p}{2}} \|_{L^{4}(\Omega_T)}^{4} \le  \| \nabla (R^\varepsilon)^{\frac{p}{2}} \|_{L^2(\Omega_T)}^2 \|R^\varepsilon\|_{L^\infty(0,T; L^p(\Omega))}^p \le C_{p,T},
\end{align*}
i.e., the uniform boundedness of $R^\varepsilon$ in  $L^{2+2\alpha}(\Omega_T)$ is obtained. 
\end{proof}

\begin{lemma}[Bootstrap argument] 
\label{Lem.Bootstrap} For any $1\le q<\infty$, it holds that 
\begin{align*}
\dint{} (R^\varepsilon)^{q} +   \dint{} \left(|\nabla R_1^\varepsilon |^2 + |\nabla R_2^\varepsilon |^2 \right) + \|S^\varepsilon\|_{W^{2,1}_q(\Omega_T)}  \le C_T.
\end{align*} 
In addition, the fast reaction term can be estimated as   
\begin{equation}
\|f(S^\varepsilon) R_1^\varepsilon   - g(S^\varepsilon) R_2^\varepsilon\|_{L^q(\Omega_T)} \le C_T \varepsilon^{1/q} \quad \text{for any } 2\le q<\infty.
\label{Estimate.ConvToCritMani}
\end{equation}
\end{lemma}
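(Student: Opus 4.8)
The plan is to prove the four asserted bounds in sequence, exploiting Lemma~\ref{Lem.HeatRegularisation} (heat regularisation), Lemma~\ref{Lem.UniformRegularity}, and Lemma~\ref{Lem.Feedback} together with the energy estimate \eqref{Estimate.Energy}. First I would establish the $L^q$-bounds on $R^\varepsilon$ by the bootstrap announced in \eqref{Introduction.BootstrapArgument}. Lemma~\ref{Lem.UniformRegularity} furnishes the base case $\sup_{\varepsilon}\|R^\varepsilon\|_{L^{2+\delta}(\Omega_T)}<\infty$, i.e.\ condition \eqref{Condition.Feedback} with $\alpha=\delta$. Iterating Lemma~\ref{Lem.Feedback} then yields $\sup_\varepsilon\|R^\varepsilon\|_{L^{2+2^n\delta}(\Omega_T)}\le C_{T,n}$ for every $n\in\mathbb N$ by induction, and since $2+2^n\delta\to\infty$ and $|\Omega_T|<\infty$, the embedding $L^{2+2^n\delta}(\Omega_T)\hookrightarrow L^q(\Omega_T)$ gives $\sup_\varepsilon\|R^\varepsilon\|_{L^q(\Omega_T)}\le C_{q,T}$ for every $1\le q<\infty$.

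Second, I would upgrade the regularity of $S^\varepsilon$. With $R_1^\varepsilon,R_2^\varepsilon$ now uniformly bounded in every $L^q(\Omega_T)$, the forcing $\mu(\eta_1R_1^\varepsilon+\eta_2R_2^\varepsilon)-\rho S^\varepsilon$ in the genuine $S^\varepsilon$-equation (not merely the inequality used before) is uniformly bounded in $L^q(\Omega_T)$, so Lemma~\ref{Lem.HeatRegularisation} gives $\|S^\varepsilon\|_{W^{2,1}_q(\Omega_T)}\le C_T$ for every $q<\infty$ --- the third asserted bound --- and in particular $\nabla S^\varepsilon$ is uniformly bounded in $L^r(\Omega_T)$ for every $r<\infty$. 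The gradient bound on $R_i^\varepsilon$ then follows from \eqref{Estimate.Energy} taken at $p=2$: there the dissipation terms are exactly $C_{d_{R_i},2}\int_\Omega|\nabla R_i^\varepsilon|^2$, while the right-hand side $C_{2,T}\big(E_2^\varepsilon+\int_\Omega (R_1^\varepsilon)^2|\nabla S^\varepsilon|^2+\int_\Omega(R^\varepsilon)^3\big)$ is controlled by the bounds already obtained, via H\"older (e.g.\ $\int_\Omega (R_1^\varepsilon)^2|\nabla S^\varepsilon|^2\le\|R_1^\varepsilon\|_{L^4(\Omega)}^2\|\nabla S^\varepsilon\|_{L^4(\Omega)}^2$). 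Dropping the nonnegative $\tfrac1\varepsilon\int_\Omega Q_2^\varepsilon$, integrating in time and applying Gr\"onwall yields $\dint{}\big(|\nabla R_1^\varepsilon|^2+|\nabla R_2^\varepsilon|^2\big)\le C_T$.

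For the fast-reaction estimate \eqref{Estimate.ConvToCritMani} I would use \eqref{Estimate.Energy} at $p=q$ with $q\ge 2$, whose essential feature is the coercive term $\tfrac1\varepsilon\int_\Omega Q_q^\varepsilon$. Writing $a:=f(S^\varepsilon)R_1^\varepsilon\ge0$ and $b:=g(S^\varepsilon)R_2^\varepsilon\ge0$, the key point is the elementary convexity (strong-monotonicity) inequality $(a^{q-1}-b^{q-1})(a-b)\ge c_q\,|a-b|^q$, valid for $a,b\ge0$ and $q\ge2$, so that $Q_q^\varepsilon\ge c_q\,|f(S^\varepsilon)R_1^\varepsilon-g(S^\varepsilon)R_2^\varepsilon|^q$. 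Integrating \eqref{Estimate.Energy} over $(0,T)$, discarding the nonnegative gradient terms and $E_q^\varepsilon(T)$, and bounding the right-hand side by the $L^q$-bounds already in hand (the datum $E_q^\varepsilon(0)$ being finite by Assumption~\ref{Assumption.AllinOne} and \eqref{Calculate.EnergyStep2a}, while $\int_0^T E_q^\varepsilon\le C_{q,T}$ by the $L^q(\Omega_T)$-bounds on $R_i^\varepsilon$), I obtain $\tfrac1\varepsilon\dint{}Q_q^\varepsilon\le C_{q,T}$. Hence $\|f(S^\varepsilon)R_1^\varepsilon-g(S^\varepsilon)R_2^\varepsilon\|_{L^q(\Omega_T)}^q\le C_{q,T}\,\varepsilon$, which is the claimed rate $\varepsilon^{1/q}$.

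The main obstacle is the scheduling that makes the bootstrap close: each use of the energy estimate at exponent $p$ feeds on the $L^{p+1}$-bound of $R^\varepsilon$ and an $L^4$-type bound of $\nabla S^\varepsilon$, which themselves improve only after the previous step, so the arguments must be carried out strictly in the order $L^q(R^\varepsilon)\to W^{2,1}_q(S^\varepsilon)\to\nabla R_i^\varepsilon\to$ fast-reaction rate. The only genuinely new analytic ingredient beyond H\"older bookkeeping is the convexity inequality for $Q_q^\varepsilon$; matching the exponent there to the target $L^q$-norm, and extracting exactly one power of $\varepsilon$ from the coercive term, is precisely what produces the sharp $\varepsilon^{1/q}$ rate.
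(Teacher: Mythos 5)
Your proposal is correct and follows essentially the same route as the paper: iterate Lemma \ref{Lem.Feedback} from the $L^{2+\delta}$ base case of Lemma \ref{Lem.UniformRegularity} to get all $L^q$ bounds, feed these back into the energy estimate \eqref{Estimate.Energy} at $p=2$ for the gradient bound and at $p=q$ for the coercive term $\tfrac1\varepsilon\iint Q_q^\varepsilon\le C_{q,T}$, use the monotonicity inequality $(a^{q-1}-b^{q-1})(a-b)\ge|a-b|^q$ for the rate, and apply Lemma \ref{Lem.HeatRegularisation} for the $W^{2,1}_q$ bound on $S^\varepsilon$. The only differences are cosmetic (you upgrade $S^\varepsilon$ before rather than after the other bounds, and you spell out the H\"older bookkeeping the paper leaves implicit).
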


\begin{proof} By Lemma \ref{Lem.UniformRegularity},  $R^\varepsilon$ is uniformly bounded in $L^{2+\delta }(\Omega_T)$. Then, Lemma \ref{Lem.Feedback} gives the feedback that $R^\varepsilon$ is uniformly bounded in $L^{2+2\delta }(\Omega_T)$. An iteration of this feedback yields 
\begin{align*}
\sup_{\varepsilon>0}   \| R^\varepsilon \|_{L^{2+2^n\delta}(\Omega_T)} \le C_{\delta,n,T} 
\end{align*}
for any $n\in \mathbb{N}$, i.e.,   $R^\varepsilon$ is uniformly bounded in $L^q(\Omega_T)$ for any $1\le q<\infty$. This allows us to choose $p=2$ in the energy estimate \eqref{Estimate.Energy} to have 
\begin{align*}
\dint{}|\nabla R_1^\varepsilon |^2  + \dint{}|\nabla R_2^\varepsilon |^2 \le C_T.
\end{align*}
Note that, from Lemma \ref{Lem.EnergyEstimate} we also have 
\begin{align*}
\frac{1}{\eps} \dint{} Q_{p}^\varepsilon   \le C_{p,T}, \quad \text{for any } 2\le p <\infty.   
\end{align*}
By the basic inequality $|x^{p-1}-y^{p-1}|\ge |x-y|^{p-1}$ for all $x,y\ge0$ and $p\ge 2$, we directly imply the convergence \eqref{Estimate.ConvToCritMani}. 
 Finally, with the uniform boundedness of $S^\varepsilon$ in $L^\infty(\Omega_T)$ and of $R^\varepsilon$ in $L^q(\Omega_T)$, the quantity $\mu \big(\eta_1R_1^\varepsilon +  \eta_2 R_2^\varepsilon\big) - \rho S^\varepsilon $ 
is uniformly bounded in $L^q(\Omega_T)$ (for any $1\le q<\infty$). Therefore, an application of the heat regularisation, Lemma  \ref{Lem.HeatRegularisation}, to the equation for $S^\varepsilon$ yields the uniform boundedness of $S^\varepsilon$ in $W^{2,1}_q(\Omega_T)$. 
\end{proof}

\subsection{Fast reaction limit}
\label{Sec.FRLsmoothcase}

In Theorem \ref{Theo.ConvergeToLimitSys}, we will show that the $\varepsilon$-dependent system can be reduced to the limiting system \eqref{System.LimitingSys} combining with the algebraic equation \eqref{Formula.CriticalManifold}, where, in more details, $R^\varepsilon$, $\nabla R^\varepsilon$, $\nabla S^\varepsilon$ are uniformly bounded in $L^k(\Omega_T)$ for any $1\le k<\infty$, $L^2(\Omega_T)^2$, $L^\infty(\Omega_T)^2$ respectively. First, let us state the concept of a weak solution to the limiting system.

\begin{definition}[Weak solution to the limiting system]\label{Definition.WeakSolution} A pair of non-negative functions $(R,S)$ is called a weak solution to \eqref{System.LimitingSys} if for any $T>0$ 
\begin{gather*}
R\in L^{4+}(\Omega_T), \, \nabla R \in L^2(\Omega_T)^2,\, \pa_t R \in L^2(0,T;H^{-1}(\Omega)),\quad S\in W^{2,1}_{4+}(\Omega_T), 
\end{gather*}
  and     
\begin{equation}\label{System.WeakSolution}
	\begin{aligned}
	 &  \dint{} \partial_t R \varphi + \dint{}\nabla \bigg( \bigg( d_{R_1} - (d_{R_1}-d_{R_2})\frac{f(S)}{f(S)+g(S)} \bigg) R \bigg) \cdot \nabla \varphi  =  \dint{} h(R,S) \varphi ,   \\		 
		 & \hspace*{0.15cm} \dint{} \partial_tS \psi   + d_S\dint{} \nabla S \cdot  \nabla \psi = \mu \dint{} (\eta_1 \xi_1(R,S) + \eta_2 \xi_2(R,S))\psi - \rho \dint{} S\psi,  
		\end{aligned}
\end{equation} 
for all test functions $\varphi, \psi \in L^{2}(0,T;H^{1}(\Omega))$, where $h$ is given by \eqref{Formula.hRS}. 
\end{definition}

\begin{proof}[Proof of Theorem \ref{Theo.ConvergeToLimitSys}] Since the sequences $\{S^\varepsilon\}$ and $ \{\partial_t S^\varepsilon\}$ are respectively bounded in $L^\infty(0, T ;W^{1,\infty}(\Omega))$ and $L^s(\Omega_T)$, cf. Lemmas \ref{Lem.HeatRegularisation}, \ref{Lem.Bootstrap}, the Aubin-Lions lemma yields that  $\{S^\varepsilon\} $ is relatively compact in $\LQ{\infty}$. Thus, up to a subsequence (not relabeled),  
\begin{align}
S^\varepsilon \to  S \quad \text{in } L^{\infty}(\Omega_T) .
\label{Proof.TheoFSL.Step1}
\end{align}
On the other hand,   $\{\nabla R_1^\varepsilon\}$ and $\{\nabla R_2^\varepsilon\}$ are bounded in $L^{2}(\Omega_T)^2$ due to the bootstrap argument in Lemma \ref{Lem.Bootstrap}. Hence, it follows from adding the equations for $R_1^\varepsilon,R_2^\varepsilon$, i.e. 
\begin{align}
\partial_t R^\varepsilon = \sum_{i=1}^2 \Big( d_{R_i} \Delta R_i^\varepsilon  + \gamma_i R_i^\varepsilon(\Rhat-R^\varepsilon)  - \eta_iR_i^\varepsilon \Big),
\label{Equation.R} 
\end{align}
that $\{\partial_tR^\varepsilon\}$ is bounded in $L^2(0,T;(H^1(\Omega))^*)$. By applying the Aubin-Lions lemma again, we see from Lemma \ref{Lem.Bootstrap} that $R^\varepsilon$ is relatively compact in $L^2(\Omega_T)$, which can be improved up to $L^k(\Omega_T)$ for any $1\le k<\infty$ upon Lemma \ref{Lem.UniformRegularity}.  
Therefore, by the nonnegativity of $R_1^\varepsilon,R_2^\varepsilon$, we deduce
\begin{align}
R_{1}^\varepsilon \to R_1, \quad  R_{2}^\varepsilon \to  R_{2} \quad  \text{in } \LQ{k}, \quad 1\le k<\infty,
\label{Proof.TheoFSL.Step2}
\end{align}
up to a subsequence.  

\medskip

Next, we will prove that $(R,S)$ is a weak solution to \eqref{System.LimitingSys}, which is defined according to Definition \ref{Definition.WeakSolution}. 
First, one can observe that $f(S^\varepsilon)$  converges strongly to $f(S)$ in $\LQ{\infty}$. Indeed, by Lagrange’s mean value theorem, there exists $\widetilde{S}^\varepsilon=\widetilde{S}^\varepsilon(x,t)$ such that
\begin{align*}
\min\{S^\varepsilon;S\} \le \widetilde{S}^\varepsilon \le \max\{S^\varepsilon;S\}, 
\end{align*}
and
\begin{equation}
\begin{aligned}
& |f(S^\varepsilon)-f(S)| = |f'(\widetilde{S}^\varepsilon)| |S^\varepsilon-S| \le \left(\sup_{ \min\{S^\varepsilon;S\} 
 \le s \le  \max\{S^\varepsilon;S\} }  |f'(s)|\right) |S^\varepsilon-S|    
\end{aligned} 
\label{Proof.TheoFSL.Step3}
\end{equation}
since $f$ is a $C^2$ function as Assumption \ref{Assumption.AllinOne}. The same argument shows that $g(S^\varepsilon)$  converges strongly to $g(S)$ in $\LQ{\infty}$. Secondly, direct computations for  
\begin{equation*}
\begin{aligned}
&K_1^\varepsilon:= \frac{f(S^\varepsilon)}{f(S^\varepsilon)+g(S^\varepsilon)} - \frac{f(S)}{f(S)+g(S)} , \\
&K_2^\varepsilon:=\frac{g(S^\varepsilon)}{f(S^\varepsilon)+g(S^\varepsilon)} - \frac{g(S)}{f(S)+g(S)},    
\end{aligned}
\end{equation*}
show
\begin{equation}
\label{Proof.TheoFSL.Step4}
\begin{aligned}
&K_1^\varepsilon = \frac{g(S)}{f(S)+g(S)} \frac{f(S^\varepsilon) - f(S)}{f(S^\varepsilon) + g(S^\varepsilon)} - \frac{f(S)}{f(S)+g(S)}   \frac{g(S^\varepsilon)-g(S)}{f(S^\varepsilon) + g(S^\varepsilon)} , \\
&K_2^\varepsilon = \frac{f(S)}{f(S)+g(S)} \frac{g(S^\varepsilon) - g(S)}{f(S^\varepsilon) + g(S^\varepsilon)} - \frac{g(S)}{f(S)+g(S)}   \frac{f(S^\varepsilon)-f(S)}{f(S^\varepsilon) + g(S^\varepsilon)} .   
\end{aligned}
\end{equation}
Since $f(S^\varepsilon)$,  $g(S^\varepsilon)$ have positive bounds from below in \eqref{Calculate.EnergyStep2a},
\begin{equation}
\begin{aligned}
& \frac{f(S^\varepsilon)}{f(S^\varepsilon)+g(S^\varepsilon)} \to \frac{f(S)}{f(S)+g(S)}  \quad \text{in } \LQ{\infty}, \\
& \frac{g(S^\varepsilon)}{f(S^\varepsilon)+g(S^\varepsilon)} \to \frac{g(S)}{f(S)+g(S)} \quad \text{in } \LQ{\infty}. 
\label{Proof.TheoFSL.Step5}
\end{aligned}
\end{equation}
Now, by the uniform lower boundedness of $f(S^\varepsilon)$,  $g(S^\varepsilon)$ again,  
\begin{align}
 \left\|R_1^\varepsilon - \frac{g(S^\varepsilon)}{f(S^\varepsilon)+g(S^\varepsilon)} R^\varepsilon \right\|_{L^2(\Omega_T)} \le \,& C_T  \| f(S^\varepsilon) R_1^\varepsilon   - g(S^\varepsilon) R_2^\varepsilon \|_{L^2(\Omega_T)}   , \nonumber\\
 \left\|R_2^\varepsilon - \frac{f(S^\varepsilon)}{f(S^\varepsilon)+g(S^\varepsilon)}R^\varepsilon \right\|_{L^2(\Omega_T)} \le \,& C_T \| f(S^\varepsilon) R_1^\varepsilon   - g(S^\varepsilon) R_2^\varepsilon \|_{L^2(\Omega_T)} . \label{Proof.TheoFSL.Step6}
\end{align}
Then, by estimating the fast reaction term in Lemma \ref{Lem.Bootstrap}, we can use the triangle inequality to imply from \eqref{Proof.TheoFSL.Step2} and  \eqref{Proof.TheoFSL.Step3} that, for any $1\le k<\infty$,
\begin{equation} 
\begin{aligned}
R_1^\varepsilon \to \frac{g(S)}{f(S)+g(S)} R=\xi_1(R,S) \quad \text{in } \LQ{k},\\
 R_2^\varepsilon \to \frac{f(S)}{f(S)+g(S)} R=\xi_2(R,S) \quad \text{in } \LQ{k},
\end{aligned}
\label{Proof.TheoFSL.Step7}
\end{equation}
which also yields the following weak convergence
\begin{gather*}
\nabla( d_{R_1} R_1^\varepsilon + d_{R_2} R_2^\varepsilon)  
\rightharpoonup \nabla  \left( \bra{d_{R_1} - (d_{R_1}-d_{R_2})\frac{f(S)}{f(S)+g(S)}}R \right)  \quad \text{in } L^{2}(\Omega_T). 
\end{gather*}

Now, by adding the equations for $R_1^\varepsilon$ and $R_2^\varepsilon$  we have 
\begin{align*} 
& \dint{} \partial_t R^\varepsilon \varphi + \dint{}\nabla (d_{R_1} R_1^\varepsilon + d_{R_2} R_2^\varepsilon) \cdot \nabla \varphi \\
& \hspace*{3.3cm} = \dint{} \Big( (\gamma_1 R_1^\varepsilon + \gamma_2 R_2^\varepsilon)(\Rhat-R^\varepsilon)  -  (\eta_1 R_1^\varepsilon+\eta_2 R_2^\varepsilon) \Big) \varphi , \\
& \dint{} \partial_tS^\varepsilon \psi   + d_S\dint{} \nabla S^\varepsilon \cdot \nabla \psi = \mu \dint{} (\eta_1 R_1^\varepsilon + \eta_2 R_2^\varepsilon )\psi - \rho \dint{} S^\varepsilon \psi ,
\end{align*}
for all test functions $\varphi, \psi \in L^{2}(0,T;H^{1}(\Omega))$, which after sending $\varepsilon$ to $0$ shows that  $(R,S)$ is a weak solution to the limiting system  \eqref{System.LimitingSys}. Finally,  $S$ is, in fact, a strong solution to the problem  
	\begin{equation}
\begin{aligned}
\left\{
	\begin{array}{rlll}
		\displaystyle \pa_t S - d_S\Delta S &\hspace*{-0.2cm}=\hspace*{-0.2cm}& \mu \big(\eta_1\xi_1 +  \eta_2 \xi_2 \big) - \rho S 
 & \text{in } \Omega_T, \\
	\nabla S \cdot \nu  &\hspace*{-0.2cm}=\hspace*{-0.2cm}& 0 & \text{on } \Gamma_T, \\
	S(0)&\hspace*{-0.2cm}=\hspace*{-0.2cm}& S_0 & \text{on } \Omega,   
	\end{array}
	\right.
\end{aligned}
\label{Problem.S}	
\end{equation}
where  $R \in L^k(\Omega_T)$ for any $1\le k<\infty$ and 
\begin{align*}
\xi_1 = \frac{g(S)}{f(S)+g(S)} R \in \LQ{k} ,  \quad 
\xi_2 = \frac{f(S)}{f(S)+g(S)} R \in \LQ{k}.
\end{align*}
By the heat regularisation in Lemma \ref{Lem.HeatRegularisation},   $S\in W^{2,1}_{k}(\Omega_T)$ and particularly $S,|\nabla S|\in L^\infty(\Omega_T)$. 
\end{proof}

\section{On the limiting system}\label{sec3}

\subsection{$L^\infty$-boundedness via Alikakos iteration}

We will improve the regularity of weak solutions to the limiting system by performing an Alikakos iteration \cite{alikakos1979application}, which is presented in Lemma \ref{Lem.LInfinityEstimate} with an $L^\infty(\Omega_T)$-bound for the component $R$.

\begin{lemma}
[$L^\infty$-estimate for the roots] 
\label{Lem.LInfinityEstimate}
Let $(R,S)$ be a weak solution to the limiting system \eqref{System.LimitingSys}. Then 
\begin{align}
\|R\|_{L^\infty(\Omega_T)} + \|S\|_{L^\infty(\Omega_T)} + \|\nabla S\|_{L^\infty(\Omega_T)^2} \le C_T. 
\label{Estimate.LInfinity}
\end{align}
\end{lemma}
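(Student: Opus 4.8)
The plan is to rewrite the first equation of \eqref{System.LimitingSys} in the non-divergence form
\[
\partial_t R = \Delta\big(a(S)R\big) + h(R,S), \qquad a(S):= d_{R_1}-(d_{R_1}-d_{R_2})\tfrac{f(S)}{f(S)+g(S)},
\]
and to perform an Alikakos--Moser iteration on $R$ driven by the logistic structure of $h$. Three structural observations set up the argument. First, since $\tfrac{f(S)}{f(S)+g(S)}\in[0,1)$, the coefficient is uniformly elliptic, $d_{R_2}<a(S)\le d_{R_1}$; moreover, as $f,g\in C^2$ with $g>0$ (so $f(S)+g(S)$ stays bounded below on the compact range of $S$), the derivatives $a'(S),a''(S)$ are bounded and $a'(S)\le0$. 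Second, using $\xi_1+\xi_2=R$, $0\le\xi_i\le R$, $\gamma_2\le\gamma_1$ and $\eta_i\ge0$ from \eqref{Formula.R1R2}--\eqref{Formula.hRS}, a short case distinction ($R\le\Rhat$ versus $R>\Rhat$) yields the logistic bound $h(R,S)\le \gamma_1\Rhat\,R-\gamma_2 R^2$. Third, the weak-solution regularity $S\in W^{2,1}_{4+}(\Omega_T)$ from Definition \ref{Definition.WeakSolution} together with the two-dimensional parabolic embedding ($q>N+2=4$) already gives $\|S\|_{L^\infty(\Omega_T)}+\|\nabla S\|_{L^\infty(\Omega_T)}<\infty$, so that $\nabla S$ may be used as a bounded coefficient below.

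Next I would test the $R$-equation with $R^{p-1}$ for $p\ge2$ and integrate by parts twice; the Neumann condition gives $\nabla(a(S)R)\cdot\nu=0$, so no boundary terms remain, leading to
\[
\tfrac1p\tfrac{d}{dt}\int_\Omega R^p +(p-1)\int_\Omega a(S)R^{p-2}|\nabla R|^2 = -(p-1)\int_\Omega a'(S)R^{p-1}\nabla S\cdot\nabla R +\int_\Omega R^{p-1}h(R,S).
\]
The elliptic term dominates $\tfrac{4(p-1)d_{R_2}}{p^2}\int_\Omega|\nabla R^{p/2}|^2$. The coupling term is the crucial one: writing $R^{p-1}\nabla R=\tfrac2p R^{p/2}\nabla R^{p/2}$ and applying Young's inequality, I would absorb half of the gradient term at the cost of $Cp\,\|\nabla S\|_{L^\infty}^2\int_\Omega R^p$. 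Together with the logistic bound, which gives $\int_\Omega R^{p-1}h\le \gamma_1\Rhat\int_\Omega R^p-\gamma_2\int_\Omega R^{p+1}$, this produces
\[
\tfrac{d}{dt}\int_\Omega R^p +\tfrac{2(p-1)d_{R_2}}{p}\int_\Omega|\nabla R^{p/2}|^2 +p\gamma_2\int_\Omega R^{p+1}\le Cp^2\int_\Omega R^p,
\]
where the superlinear dissipation $p\gamma_2\int_\Omega R^{p+1}$ is exactly the feature that allows the iteration to close.

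From here the argument is a standard Moser--Alikakos recursion. Applying the two-dimensional Gagliardo--Nirenberg inequality $\|w\|_{L^2}^2\le C\|\nabla w\|_{L^2}\|w\|_{L^1}+C\|w\|_{L^1}^2$ with $w=R^{p/2}$ to reinsert $\int_\Omega|\nabla R^{p/2}|^2$ against the lower norm $\|R\|_{L^{p/2}}^{p/2}$, and using Young's inequality to play $\int_\Omega R^p$ off against the dissipation $\int_\Omega R^{p+1}$, I would obtain for $p=2^k$ a recursion for $M_k:=\sup_{0\le t\le T}\|R(t)\|_{L^{2^k}}$ whose constants grow only polynomially in $p$. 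Passing $k\to\infty$ then yields $\|R\|_{L^\infty(\Omega_T)}\le C_T$. Finally, once $R$ is bounded, $\mu(\eta_1\xi_1+\eta_2\xi_2)-\rho S$ is bounded in $L^\infty(\Omega_T)$, so Lemma \ref{Lem.HeatRegularisation} applied to the $S$-equation gives $\|S\|_{W^{2,1}_q(\Omega_T)}\le C_T$ for every $q<\infty$, and the parabolic embedding in dimension two yields $\|S\|_{L^\infty(\Omega_T)}+\|\nabla S\|_{L^\infty(\Omega_T)}\le C_T$, completing \eqref{Estimate.LInfinity}.

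The main obstacle I anticipate is the cross-diffusion coupling term $\int_\Omega a'(S)R^{p-1}\nabla S\cdot\nabla R$, which arises precisely because the diffusion is in the non-divergence form $\Delta(a(S)\,\cdot)$ rather than $\nabla\cdot(a(S)\nabla\,\cdot)$: it must be dominated by the elliptic term with constants that stay controlled as $p\to\infty$, while ensuring the logistic $R^{p+1}$-damping survives this absorption. A secondary delicate point is to guarantee that the final constant is genuinely of the form $C_T$ (depending only on $T$, the parameters, and the data): the qualitative finiteness of $\|\nabla S\|_{L^\infty}$ suffices to run the iteration, whereas the quantitative bound is obtained only a posteriori through the bootstrap of the $S$-equation in the last step.
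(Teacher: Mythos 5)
Your proposal is correct and follows essentially the same route as the paper: an Alikakos--Moser iteration on $\int_\Omega R^{p}$, with the cross term $(p-1)\int_\Omega a'(S)R^{p-1}\nabla S\cdot\nabla R$ absorbed into the ellipticity $a(S)\ge d_{R_2}$ using $\|\nabla S\|_{L^\infty(\Omega_T)}<\infty$ (which, as you note, follows from $S\in W^{2,1}_{4+}(\Omega_T)$ and the parabolic Sobolev embedding), the recursion being closed by Gagliardo--Nirenberg against the previously obtained $L^{p/2}$ norm. One caveat: the superlinear damping $-p\gamma_2\int_\Omega R^{p+1}$ is \emph{not} what makes the iteration close (the paper only uses $h(R,S)\le C R$, and since $\gamma_2$ is merely assumed non-negative this term may vanish), so you should rely solely on the interpolation mechanism you already describe, establish the base case $\|R\|_{L^\infty(0,T;L^1(\Omega))}\le C_T$ explicitly from the logistic structure, and verify quantitatively (as the paper does, with constants of order $2^{6n}$ whose iterated product has bounded $2^{n}$-th root) that the recursion constants do not destroy the limit $p\to\infty$.
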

\begin{proof}  First, regularity of $S$ is clear due to the Sobolev embedding. It is only necessary to prove the regularity of   $R$.  

\medskip

\noindent {\bf Estimate on the total roots}: Taking the test function in \eqref{System.WeakSolution} by $\frac{1}{s}\chi_{(t,t+s)}$, for $t\in(0,T)$ and $s>0$, gives $
\partial_t \|R\|_{L^1(\Omega)} = \int_\Omega h(R,S) $ and hence
\begin{align*}
\partial_t \|R(t)\|_{L^1(\Omega)}  & \le  \|R(t)\|_{L^1(\Omega)} \left(\Rhat \max(\gamma_1;\gamma_2) - \frac{\min(\gamma_1;\gamma_2)}{ |\Omega|}   \|R(t)\|_{L^1(\Omega)} \right) . 
\end{align*}
Consequently, this yields  
\begin{align*}
\|R\|_{L^\infty(0,T;L^1(\Omega))} \le C( \gamma_1,\gamma_2,|\Omega|,\|R_0\|_{L^1(\Omega)}). 
\end{align*}
	
\noindent {\bf $L^p$- Estimate}: 
To obtain $L^p$-estimates  for  $R$, we follow the Alikakos iteration, \cite{alikakos1979application}. Let  $p_n=2^n$ for all $n\in \mathbb{N}$. Then $R^{p_n-1}\in L^4(\Omega_T)$ and $\nabla R^{p_n-1}\in L^2(\Omega_T)^2$ for $n=1$. So, by the induction hypothesis, for $n\ge 1$ we can take $R^{p_{n}-1}(\frac{1}{s}\chi_{(t,t+s)})$, for $t\in(0,T)$ and $s>0$, to be a test function in  \eqref{System.WeakSolution}, which gives
\begin{equation}
\begin{gathered}
\frac{1}{p_{n}} \partial_t \int_\Omega  (R^{p_{(n-1)}})^2   + \frac{4(p_{n}-1)}{p_{n}^2} \int_\Omega  \bigg( d_{R_1} - (d_{R_1}-d_{R_2})\frac{f(S)}{f(S)+g(S)} \bigg) |\nabla R^{p_{(n-1)}}|^2 \\
 =   \frac{(p_{n}-1)(d_{R_1}-d_{R_2})}{p_{(n-1)}} \int_\Omega  R^{p_{(n-1)}} \nabla \bigg(    \frac{f(S)}{f(S)+g(S)} \bigg) \cdot   \nabla R^{p_{(n-1)}} + \int_\Omega  h(R,S) R^{p_{n}-1} . 
\end{gathered}
\label{Proof.LemLInfinityStep1}
\end{equation}
Note that, by Assumption \eqref{Assumption.InitialData}, we can argue similarly as Lemma \ref{Lem.LowerBound} to see that $S\ge s_0>0$ on $\Omega_T$ in which $S$ is in fact the strong solution to \eqref{Problem.S}. Hence, in the same way as \eqref{Calculate.EnergyStep2a}, we can claim that $f(S)+g(S)$ has a positive bound from below. Subsequently, by Assumption \ref{Assumption.AllinOne},
\begin{equation}
\begin{aligned}
& \bigg|\nabla \bigg(    \frac{f(S)}{f(S)+g(S)} \bigg)\bigg| \le \frac{|f'(S)|g(S)+f(S)| g'(S) |}{(f(S)+g(S))^2} |\nabla S| \\
&   \le \left( \sup_{s_0\le s\le \|S\|_{L^\infty(\Omega_T)}} \frac{|f'(s)|g(s)+f(s)| g'(s) |}{(f(s)+g(s))^2} \right) \|\nabla S\|_{L^\infty(\Omega_T)} .
\end{aligned}
\label{Proof.LemLInfinityStep2}
\end{equation} 
Then, it follows from \eqref{Proof.LemLInfinityStep1} that 
\begin{equation*}
\begin{aligned}
& \partial_t \int_\Omega  (R^{p_{(n-1)}})^2  + \frac{4d_{R_2}(p_{n}-1)}{p_{n}} \int_\Omega  |\nabla R^{p_{(n-1)}}|^2 \\
& \le C_{1,T} \frac{p_{n}(p_{n}-1)}{p_{(n-1)}}  \int_\Omega  R^{p_{(n-1)}} |\nabla R^{p_{(n-1)}}| + C_2p_{n} \int_\Omega     (R^{p_{(n-1)}})^2 , 
\end{aligned}
\end{equation*}
by observing 
$$ d_{R_1} - (d_{R_1}-d_{R_2}) \frac{f(S)}{f(S)+g(S)}  \ge d_{R_2}, \,\,\, \text{and} \,\,\, h(R,S) \le C_2R$$ with a $T$-independent constant $C_2>0$.  This deduces 
\begin{equation}
\begin{gathered}
\partial_t\int_\Omega  (R^{p_{(n-1)}})^2 + d_{R_2} \dintt{}  |\nabla R^{p_{(n-1)}}|^2  \le   a_{n} \int_\Omega  (R^{p_{(n-1)}})^2 , 
\end{gathered} 
\label{Proof.LemLInfinityStep3}
\end{equation}
where we denote
$a_{n}:= C_{1,T}^2p_{n}(p_{n}-1)/(8d_{R_2}) + C_2p_{n}$. 
Let $\sigma_n\in(0,1)$ be  small enough such that $
(a_{n}+\sigma_{n})\sigma_{n} \le d_{R_2}.$ 
By the Gagliardo-Nirenberg and Young inequalities, we have   
\begin{align*}
\int_\Omega (R^{p_{(n-1)}})^2 \le \sigma_n \int_\Omega |\nabla R^{p_{(n-1)}}|^2 + \frac{C_{3,\Omega}}{\sigma_n} \left( \int_\Omega R^{p_{(n-1)}} \right)^2,
\end{align*}
and the differential inequality \eqref{Proof.LemLInfinityStep3}  implies
\begin{equation*}
\begin{gathered}
\int_\Omega  R^{2^n} \le \max \left\{ C_{3,\Omega} \frac{a_n+\sigma_n}{\sigma_n^2} \left( \int_\Omega R^{2^{(n-1)}} \right)^2 ; \, \|R_0\|_{L^\infty(\Omega)}^{2^n} \right\}. 
\end{gathered} 
\end{equation*}
For simplicity, we can set $C_{3,\Omega}=1$. Since $(a_n+\sigma_n)/\sigma_n^2\ge 1$, an iteration yields
\begin{equation*}
\begin{aligned}
\int_\Omega  R^{2^n} &\le \prod_{i=1}^1 \left( \frac{a_{n+1-i}+\sigma_{n+1-i}}{\sigma_{n+1-i}^2} \right)^{2^{i-1}} \left( \max \left\{ \left( \int_\Omega R^{2^{(n-1)}} \right)^{2^1} ; \, \|R_0\|_{L^\infty(\Omega)}^{2^n} \right\} \right) \\
&\le \prod_{i=1}^2 \left( \frac{a_{n+1-i}+\sigma_{n+1-i}}{\sigma_{n+1-i}^2} \right)^{2^{i-1}} \left( \max \left\{ \left( \int_\Omega R^{2^{(n-2)}} \right)^{2^2} ; \, \|R_0\|_{L^\infty(\Omega)}^{2^n} \right\} \right)\\
&...\\
&\le \prod_{i=1}^n \left( \frac{a_{n+1-i}+\sigma_{n+1-i}}{\sigma_{n+1-i}^2} \right)^{2^{i-1}} \left( \max \left\{ \|R\|_{L^\infty(0,T;L^1(\Omega))}^{2^n} ; \, \|R_0\|_{L^\infty(\Omega)}^{2^n} \right\} \right).
\end{aligned} 
\end{equation*}
Note that $a_n\lesssim  2^{2n}$, hence we can choose $\sigma_n \sim 2^{-2n}$ to have  $(a_n+\sigma_n)/\sigma_n^2 \lesssim 2^{6n}$ (as well as $(a_n+\sigma_n)\sigma_n \le d_{R_2}$) for all $n\ge 1$. Here, the notation $\alpha_n\lesssim \beta_n$ and $\alpha_n\lesssim \beta_n$ are used instead of $\alpha_n \le C \beta_n$ and $\alpha_n = C \beta_n$ for some independent constants $C$ of $n$. We then have   
\begin{align*}
\prod_{i=1}^n \left( \frac{a_{n+1-i}+\sigma_{n+1-i}}{\sigma_{n+1-i}^2} \right)^{2^{i-1}} \lesssim   64^{\left( \displaystyle  \sum_{i=1}^n (n+1-i)2^{i-1} \right)} \le 4096^{2^{n}}.  
\end{align*}
Finally, 
\begin{equation*}
\begin{aligned}
\| R^{2^n} \|_{L^\infty(0,T;L^{2^n}(\Omega))}  \lesssim  \max \left\{ \|R\|_{L^\infty(0,T;L^1(\Omega))}  ; \, \|R_0\|_{L^\infty(\Omega)}  \right\} ,
\end{aligned} 
\end{equation*}
which shows \eqref{Estimate.LInfinity} after sending $n \to \infty$.  
\end{proof}
 
\subsection{Uniqueness and stability of the weak solution}

In this part, we will show the uniqueness and stability of the weak solution by studying the difference between two weak solutions $(R,S)$, $(\widetilde R, \widetilde S)$ in $L^2(0,T;H^1(\Omega))$. For this purpose, we will subtract corresponding equations in the weak formulation, given in  Definition \ref{Definition.WeakSolution}, and choose the test functions $(\varphi,\psi)=(\widetilde R-R, \widetilde S-S)$. The most difficulty comes from estimating the term 
\begin{align}
\dintt{} \bigg( \frac{f(\widetilde S)}{f(\widetilde S)+g(\widetilde S)} - \frac{f(S)}{f(S)+g(S)} \bigg) \nabla R \cdot \nabla (\widetilde{R}-R) \label{Formula.DiffTerm}
\end{align}
since $\nabla \widetilde R, \nabla R$ just belong to $L^2(\Omega_T)^2$. To overcome, in the following lemma we will first control $\|\widetilde S(t) - S(t)\|_{L^\infty(\Omega)}$ by the norm of $(\widetilde R-R, \widetilde S-S)$ in $L^2(0,t;H^1(\Omega))$ by using the heat semigroup and interpolation inequalities, and then estimate the term \eqref{Formula.DiffTerm} in Lemma \ref{Lem.DifficultTerm}.    

\begin{lemma} 
\label{Lem.LInfDifference}
Assume $(R,S), (\widetilde{R},\widetilde{S})$ are weak solutions to the system \eqref{System.WeakSolution} corresponding to the initial data $(R_0,S_0)$ and $(\widetilde R_0,\widetilde S_0)$, which satisfy Assumption \eqref{Assumption.InitialData}. 
Then 
 \begin{equation}
\begin{aligned}
  \|\widetilde S(t) - S(t)\|_{L^\infty(\Omega)}^2  
  &\le C \|\widetilde S_0 - S_0\|_{W^{2,1}_{2+}(\Omega)}^2 + C_{\alpha_1,T} \dintt{}   \Big(  |\widetilde R-R |^2 + |\widetilde S-S|^2 \Big)  
  \\
  &+ \alpha_1 \dintt{}   \Big( | \nabla(\widetilde R-R)|^2 + | \nabla(\widetilde S-S)|^2 \Big) ,  
\end{aligned}
\end{equation}
for all $t\in(0,T)$ and  any $\alpha_1>0$, where $C_{\alpha_1,T}$ is a finite constant depending on $\alpha_1$. 
\end{lemma}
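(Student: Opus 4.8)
The plan is to reduce the claim to a single scalar parabolic equation for the difference $W := \widetilde S - S$ and to propagate an $L^\infty$-bound through the heat semigroup, paying for the borderline two-dimensional integrability with a small gradient term. Setting also $V := \widetilde R - R$ and subtracting the $S$-equations in \eqref{System.WeakSolution}, I would write
\[
\partial_t W - d_S\Delta W + \rho W = G, \qquad G := \mu\sum_{i=1}^2 \eta_i\big(\xi_i(\widetilde R,\widetilde S) - \xi_i(R,S)\big),
\]
with homogeneous Neumann data and initial value $W_0 = \widetilde S_0 - S_0$. The first step is the pointwise Lipschitz bound $|G| \le C_T\,(|V| + |W|)$. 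This follows from Lemma \ref{Lem.LInfinityEstimate}: the $L^\infty$-bounds on $R,\widetilde R$, together with $S,\widetilde S \ge s_0 > 0$ (so that $f(S)+g(S)$ is bounded below), make each $\xi_i$ Lipschitz in both arguments over the relevant compact range.

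Next I would represent $W$ by Duhamel's formula for the semigroup $e^{-\rho\sigma}e^{d_S\sigma\Delta}$ generated by $-d_S\Delta + \rho$ with Neumann conditions,
\[
W(t) = e^{-\rho t}e^{d_S t\Delta}W_0 + \int_0^t e^{-\rho(t-\tau)}e^{d_S(t-\tau)\Delta}G(\tau)\,d\tau.
\]
The free term is harmless: $e^{d_S t\Delta}$ is an $L^\infty$-contraction, so $\|e^{-\rho t}e^{d_S t\Delta}W_0\|_{L^\infty(\Omega)} \le \|W_0\|_{L^\infty(\Omega)} \le C\|\widetilde S_0 - S_0\|_{W^{2,1}_{2+}(\Omega)}$ by the two-dimensional Sobolev embedding. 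For the Duhamel integral I would fix an exponent $p > 2$ and invoke the ultracontractive smoothing $\|e^{d_S\sigma\Delta}\phi\|_{L^\infty(\Omega)} \le C(1 + \sigma^{-1/p})\|\phi\|_{L^p(\Omega)}$, valid on a bounded Neumann domain with $N=2$. Cauchy--Schwarz in time is then legitimate precisely because $p > 2$ renders the kernel square-integrable, $\int_0^t(1+\sigma^{-1/p})^2\,d\sigma \le C_{p,T}$, giving
\[
\|W(t)\|_{L^\infty(\Omega)}^2 \le C\|\widetilde S_0 - S_0\|_{W^{2,1}_{2+}(\Omega)}^2 + C_{p,T}\int_0^t \|G(\tau)\|_{L^p(\Omega)}^2\,d\tau.
\]

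It then remains to convert $\|G\|_{L^p(\Omega)}$ for the chosen $p>2$ into $L^2$ quantities plus a controllable gradient contribution, and this is the only delicate point. Using $|G| \le C_T(|V|+|W|)$ and the two-dimensional Gagliardo--Nirenberg inequality $\|\phi\|_{L^p(\Omega)} \le C\|\phi\|_{H^1(\Omega)}^{1-2/p}\|\phi\|_{L^2(\Omega)}^{2/p}$ applied to $\phi \in \{V,W\}$, the exponents $2(1-2/p)$ and $4/p$ on the $H^1$- and $L^2$-norms sum to exactly $2$, so Young's inequality splits each product as $\eta\,\|\phi\|_{H^1(\Omega)}^2 + C_\eta\,\|\phi\|_{L^2(\Omega)}^2$ with $\eta>0$ at our disposal. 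Integrating in time and choosing $\eta$ proportional to $\alpha_1$ absorbs the gradient part into the $\alpha_1$-term $\alpha_1\iint_{\Omega_t}(|\nabla V|^2+|\nabla W|^2)$ and leaves the $L^2(\Omega_t)$-norms of $V$ and $W$ in the $C_{\alpha_1,T}$-term, which is exactly the asserted inequality. The main obstacle, and the reason the gradient term is unavoidable, is the borderline embedding $H^1 \not\hookrightarrow L^\infty$ in two dimensions: one cannot smooth directly from $L^2$ to $L^\infty$ with an integrable time kernel. The coupled device ``$p>2$ in the smoothing estimate plus Gagliardo--Nirenberg with exponents summing to $2$'' simultaneously restores time-integrability and yields the gradient term with the freely adjustable small coefficient $\alpha_1$.
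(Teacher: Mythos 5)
Your proposal is correct and follows essentially the same route as the paper: the pointwise Lipschitz bound on $\xi_i(\widetilde R,\widetilde S)-\xi_i(R,S)$ via the $L^\infty$-bound on $R$ and the positive lower bound of $f+g$, then Duhamel with the $L^{p}\to L^\infty$ smoothing for an exponent $p=2+\beta>2$ (so the time kernel is square-integrable), and finally Gagliardo--Nirenberg plus Young to trade the $L^{2+\beta}$-norms for $L^2$-norms and an $\alpha_1$-small gradient contribution. The paper's proof is the same argument with $\beta$ in place of your $p-2$.
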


\begin{proof} The lower boundedness of $S$, $\widetilde{S}$ by a positive constant, and then of $f(S)+g(S)$, $f(\widetilde S)+g(\widetilde S)$, is followed similarly as \eqref{Calculate.EnergyStep2a}. By the same computation as  \eqref{Proof.TheoFSL.Step4},
\begin{equation}
\begin{aligned}
\bigg| \frac{f(\widetilde S)}{f(\widetilde S)+g(\widetilde S)} - \frac{f(S)}{f(S)+g(S)} \bigg|  \le C_{T}  \left( |f(\widetilde S) - f(S)| + |g(\widetilde S) - g(S)| \right),\\
 \bigg| \frac{g(\widetilde S)}{f(\widetilde S)+g(\widetilde S)} - \frac{g(S)}{f(S)+g(S)} \bigg|  \le C_{T}  \left( |f(\widetilde S) - f(S)| + |g(\widetilde S) - g(S)| \right).
\end{aligned}
\label{Proof.LInfDifferenceStep1}
\end{equation}
Here, due to the Lagrange’s mean value theorem, similarly as \eqref{Proof.TheoFSL.Step3},  
\begin{align}
|f(\widetilde S) - f(S) | + |g(\widetilde S) - g(S) | \le C_{T} |\widetilde S - S | .
\label{Proof.LInfDifferenceStep2}
\end{align} 
Moreover, straightforward computations show
\begin{align*}
& |\xi_1(\widetilde R,\widetilde S)-\xi_1(R,S)| + |\xi_2(\widetilde R,\widetilde S)-\xi_2(R,S)| \\
& = \bigg| \frac{g(\widetilde S)}{f(\widetilde S)+g(\widetilde S)} (\widetilde R-R) + \left( \frac{g(\widetilde S)}{f(\widetilde S)+g(\widetilde S)} - \frac{g(S)}{f(S)+g(S)} \right) R \bigg| \\
& + \bigg| \frac{f(\widetilde S)}{f(\widetilde S)+g(\widetilde S)} (\widetilde R-R) + \left( \frac{f(\widetilde S)}{f(\widetilde S)+g(\widetilde S)} - \frac{f(S)}{f(S)+g(S)} \right) R \bigg| \\
& \le   |\widetilde R - R| + C_{T} \big( |f(\widetilde S)-f(S)| + |g(\widetilde S)-g(S)| \big) R, 
\end{align*}
and therefore
\begin{align}
  |\xi_1(\widetilde R,\widetilde S)-\xi_1(R,S)| + |\xi_2(\widetilde R,\widetilde S)-\xi_2(R,S)| 
  \le C_T  \left( |\widetilde R - R| +|\widetilde S-S| \right) 
\label{Proof.LInfDifferenceStep5}
\end{align}
by taking the $L^\infty$-boundedness of $R$ from Lemma  \ref{Lem.LInfinityEstimate} and the estimate  \eqref{Proof.LInfDifferenceStep2} together.   

\medskip

Now, noting that the difference $V=\widetilde S-S$ is a strong solution to the equation 
\begin{align}
\pa_t V - d_S\Delta V = \displaystyle   \mu \eta_1(\xi_1(\widetilde R,\widetilde S)-\xi_1(R,S)) +  \mu  \eta_2 (\xi_2(\widetilde R,\widetilde S)-\xi_2(R,S))   -\rho V
\label{Equation.S}
\end{align}
with the homogeneous Neumann boundary condition and the initial condition $\widetilde S_0 - S_0$. By using $L^p-L^q$ estimates for the heat semigroup, we have 
\begin{align}
\|e^{(\Delta-\rho)(t-\tau)} (\widetilde S_0 - S_0) \|_{L^\infty(\Omega)} \le  \|\widetilde S_0 - S_0\|_{L^\infty(\Omega)} \le C \|\widetilde S_0 - S_0\|_{W^{2,1}_{2+}(\Omega)},
\label{Proof.LInfDifferenceStep3}
\end{align}
and 
\begin{equation}
\begin{aligned}
& \bigg\| \int_0^t e^{(\Delta-\rho)(t-\tau)} \Big( \mu \eta_1(\xi_1(\widetilde R,\widetilde S)-\xi_1(R,S)) +  \mu  \eta_2 (\xi_2(\widetilde R,\widetilde S)-\xi_2(R,S)) \Big) \bigg\|_{L^\infty(\Omega)}^2 \\
&\le C  \left( \int_0^t  (t-\tau)^{-\frac{1}{2+\beta}}    \||\xi_1(\widetilde R,\widetilde S)-\xi_1(R,S)|+ |\xi_2(\widetilde R,\widetilde S)-\xi_2(R,S)|\|_{L^{2+\beta}(\Omega)}   \right)^2 \\
&\le C_{\beta}   T^{\frac{\beta}{2+\beta}} \int_0^t    \||\xi_1(\widetilde R,\widetilde S)-\xi_1(R,S) | + |\xi_2(\widetilde R,\widetilde S)-\xi_2(R,S)|\|_{L^{2+\beta}(\Omega)}^2 ,  
\end{aligned}
\label{Proof.LInfDifferenceStep4}
\end{equation} 
for small $\beta>0$ and $t\in(0,T)$. Therefore, it follows from variation of constant formula and the estimates  \eqref{Proof.LInfDifferenceStep5}-\eqref{Proof.LInfDifferenceStep3} that 
\begin{equation}
\begin{aligned}
  \|\widetilde S - S\|_{L^\infty(\Omega)}^2  
  &\le C \|\widetilde S_0 - S_0\|_{W^{2,1}_{2+}(\Omega)}^2 + C_{\beta,T} \int_0^t   \Big( \|\widetilde R-R \|_{L^{2+\beta}(\Omega)}^2 + \|\widetilde S-S\|_{L^{2+\beta}(\Omega)}^2 \Big) \\
  &\le C \|\widetilde S_0 - S_0\|_{W^{2,1}_{2+}(\Omega)}^2 +  C_{\alpha_1,\beta,T} \int_0^t   \Big( \|\widetilde R-R \|_{L^{2}(\Omega)}^2 + \|\widetilde S-S\|_{L^{2}(\Omega)}^2 \Big)  
  \\
  &+ \alpha_1 \int_0^t   \Big( \| \nabla(\widetilde R-R) \|_{L^{2}(\Omega)}^2 + \| \nabla(\widetilde S-S)\|_{L^{2}(\Omega)}^2 \Big)  
\end{aligned}
\label{Proof.LInfDifferenceStep6} 
\end{equation}  
for any $\alpha_1>0$, where    
\begin{gather*}
  \|\widetilde R-R \|_{L^{2+\beta}(\Omega)}^2  \le C_{\alpha_1,\beta,T}   \|\widetilde R-R \|_{L^{2}(\Omega)}^2 + (\alpha_1/C_{\beta,T})   \|\nabla(\widetilde R-R) \|_{L^{2}(\Omega)}^2, \\
  \|\widetilde S - S\|_{L^{2+\beta}(\Omega)}^2 \le C_{\alpha_1,\beta,T} \|\widetilde S - S\|_{L^{2}(\Omega)}^2 + (\alpha_1/C_{\beta,T})  \|\nabla(\widetilde S - S) \|_{L^{2}(\Omega)}^2,
\end{gather*}  
due to applications of the Gagliardo-Nirenberg and Young inequalities. 
\end{proof}


\begin{lemma}
\label{Lem.DifficultTerm}
 Let $(R,S), (\widetilde{R},\widetilde{S})$ be given by Lemma \ref{Lem.LInfDifference}.  
 Then, for all $t\in (0,T)$,
\begin{align*}
& \dintt{} \bigg( \frac{f(\widetilde S)}{f(\widetilde S)+g(\widetilde S)} - \frac{f(S)}{f(S)+g(S)} \bigg) \nabla R \cdot \nabla (\widetilde{R}-R) \\
& \le   C_T \|\widetilde S_0 - S_0\|_{W^{2,1}_{2+}(\Omega)}^2 +  \frac{d_{R_2}}{5} \dintt{}  |\nabla (\widetilde{R}-R)|^2   + C_{T} \dintt{} \left( |\widetilde R-R |^2 + |\widetilde S-S |^2 \right) . 
\end{align*} 
\end{lemma}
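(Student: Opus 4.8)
The plan is to exploit that the coefficient difference is pointwise Lipschitz in $\widetilde S - S$, to place the factor $\widetilde S - S$ in $L^\infty_x$ (which is precisely what Lemma \ref{Lem.LInfDifference} controls), and to absorb the surviving gradient of $\widetilde R - R$ into the dissipation. First, combining \eqref{Proof.LInfDifferenceStep1} with \eqref{Proof.LInfDifferenceStep2} yields the pointwise bound $\big|\frac{f(\widetilde S)}{f(\widetilde S)+g(\widetilde S)} - \frac{f(S)}{f(S)+g(S)}\big| \le C_T |\widetilde S - S|$, so the integral in question is dominated by $C_T \dintt{} |\widetilde S - S|\,|\nabla R|\,|\nabla(\widetilde R - R)|$. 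Young's inequality, keeping a deliberately small share of the dissipation, then gives
\[
C_T \dintt{} |\widetilde S - S|\,|\nabla R|\,|\nabla(\widetilde R - R)| \le \frac{d_{R_2}}{10}\dintt{}|\nabla(\widetilde R - R)|^2 + C_T \dintt{}|\widetilde S - S|^2\,|\nabla R|^2.
\]

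Next I would estimate the last integral by freezing $|\widetilde S - S|^2$ in $L^\infty_x$ and keeping $|\nabla R|^2$ in $L^1_x$. Since $\nabla R \in L^2(\Omega_T)^2$ for a weak solution, this gives $\dintt{}|\widetilde S - S|^2\,|\nabla R|^2 \le \big(\sup_{\tau\in(0,t)}\|\widetilde S(\tau) - S(\tau)\|_{L^\infty(\Omega)}^2\big)\,\|\nabla R\|_{L^2(\Omega_t)^2}^2 \le C_T \sup_{\tau\in(0,t)}\|\widetilde S(\tau) - S(\tau)\|_{L^\infty(\Omega)}^2$. Applying Lemma \ref{Lem.LInfDifference} at each $\tau\le t$ and using that its right-hand side is nondecreasing in the time endpoint (the integrals run over $\Omega_\tau\subseteq\Omega_t$ with nonnegative integrands), this supremum is bounded by $C\|\widetilde S_0 - S_0\|_{W^{2,1}_{2+}(\Omega)}^2 + C_{\alpha_1,T}\dintt{}(|\widetilde R - R|^2 + |\widetilde S - S|^2) + \alpha_1 \dintt{}(|\nabla(\widetilde R - R)|^2 + |\nabla(\widetilde S - S)|^2)$, for any $\alpha_1>0$.

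It then remains to dispose of the two gradient integrals carrying the free parameter $\alpha_1$. The term $\alpha_1\dintt{}|\nabla(\widetilde R - R)|^2$ is harmless: choosing $\alpha_1$ small enough that its prefactor does not exceed $d_{R_2}/10$, it combines with the $d_{R_2}/10$ peeled off in the first step to produce the admissible $\frac{d_{R_2}}{5}\dintt{}|\nabla(\widetilde R - R)|^2$. The term $\alpha_1\dintt{}|\nabla(\widetilde S - S)|^2$ is not permitted in the conclusion, so I would remove it via a standard $L^2$-energy estimate for the difference $V=\widetilde S - S$: testing the heat equation \eqref{Equation.S} with $V$, discarding the nonpositive term $-\rho\|V\|_{L^2(\Omega)}^2$, and using the Lipschitz control \eqref{Proof.LInfDifferenceStep5} on its right-hand side yields $\dintt{}|\nabla(\widetilde S - S)|^2 \le C_T\|\widetilde S_0 - S_0\|_{W^{2,1}_{2+}(\Omega)}^2 + C_T\dintt{}(|\widetilde R - R|^2 + |\widetilde S - S|^2)$, which is exactly of the permitted form. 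Substituting these two bounds and collecting constants gives the claim.

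The main obstacle is the low regularity $\nabla R,\nabla\widetilde R\in L^2(\Omega_T)^2$ only, which rules out any direct H\"older splitting of the triple product; the resolution is to route the estimate through the $L^\infty_x$-control of $\widetilde S - S$ supplied by Lemma \ref{Lem.LInfDifference}, and the only delicate bookkeeping is that this control feeds gradient terms back into the estimate — neutralised by taking $\alpha_1$ small for the $\widetilde R - R$ contribution and by the separate parabolic energy estimate for the $\widetilde S - S$ contribution.
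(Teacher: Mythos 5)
Your proposal is correct and follows essentially the same route as the paper's proof: bound the coefficient difference pointwise by $C_T|\widetilde S-S|$, split off a small multiple of $\dintt{}|\nabla(\widetilde R-R)|^2$ by Young's inequality, control the remaining factor via $\esssup_{\tau\le t}\|\widetilde S(\tau)-S(\tau)\|_{L^\infty(\Omega)}^2$ together with $\|\nabla R\|_{L^2(\Omega_t)}^2\le C_T$ and Lemma \ref{Lem.LInfDifference}, and eliminate the leftover $\dintt{}|\nabla(\widetilde S-S)|^2$ by testing \eqref{Equation.S} with $\widetilde S-S$ (the paper's estimate \eqref{Proof.LemDifficultTerm}). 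Your explicit remark that the right-hand side of Lemma \ref{Lem.LInfDifference} is nondecreasing in the time endpoint, which justifies taking the supremum over $\tau\le t$, is a small point the paper leaves implicit but is handled correctly.
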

\begin{proof} First, by using   \eqref{Proof.LInfDifferenceStep1}-\eqref{Proof.LInfDifferenceStep2}, we can estimate  $|f(\widetilde S)/(f(\widetilde S)+g(\widetilde S))-f(S)/(f(S)+g(S))|$ by $C_T|\widetilde S -S|$. Moreover,  the norm of $\widetilde S(t) -S(t)$ in $L^\infty(\Omega)$ is estimated by Lemma \ref{Lem.LInfDifference}. Therefore, by taking the boundedness of $\nabla R$ in $L^2(\Omega_T)^2$, cf. Definition \ref{Definition.WeakSolution}, we have 
\begin{align*}
& \dintt{} \bigg( \frac{f(\widetilde S)}{f(\widetilde S)+g(\widetilde S)} - \frac{f(S)}{f(S)+g(S)} \bigg) \nabla R \cdot \nabla (\widetilde{R}-R) \\
& \le C_{\alpha_2,T}  \|\nabla R\|_{L^2(\Omega_t)^2}^2 \left( \esssup_{\tau\in(0,t)} \|\widetilde S(\tau) - S(\tau)\|_{L^\infty(\Omega)}^2 \right)  + \alpha_2 \dintt{} |\nabla (\widetilde{R}-R)|^2 \\
& \le C_{\alpha_2,T} \|\widetilde S_0 - S_0\|_{W^{2,1}_{2+}(\Omega)}^2 +      C_{\alpha_1,\alpha_2,T} \dintt{} \left( |\widetilde R-R |^2 + |\widetilde S-S |^2 \right) \\
& + \alpha_1 C_{\alpha_2,T} \dintt{} \left( |\nabla (\widetilde{R}-R)|^2 +  |\nabla(\widetilde S-S)|^2 \right)   +  \alpha_2 \dintt{} |\nabla (\widetilde{R}-R)|^2 ,
\end{align*}
for any $\alpha_1>0$,  $\alpha_2>0$. Then, by choosing these parameters  small enough, 
\begin{align*}
& \dintt{} \bigg( \frac{f(\widetilde S)}{f(\widetilde S)+g(\widetilde S)} - \frac{f(S)}{f(S)+g(S)} \bigg) \nabla R \cdot \nabla (\widetilde{R}-R) \\
& \le C_{d_{R_2},T} \|\widetilde S_0 - S_0\|_{W^{2,1}_{2+}(\Omega)}^2  +  \frac{1}{5}d_{R_2} \dintt{}  |\nabla (\widetilde{R}-R)|^2 \\
&  +  C_{T}  \dintt{} |\nabla(\widetilde S-S)|^2 + C_{T} \dintt{} \left( |\widetilde R-R |^2 + |\widetilde S-S |^2 \right) . 
\end{align*}
To obtain the desired estimate, we need to estimate the latter, including $\nabla(\widetilde S-S)$, which can be done as follows upon multiplying the equation \eqref{Equation.S} by $\widetilde S-S$   
\begin{equation}
\begin{aligned}
&  d_S\dintt{} |\nabla (\widetilde S - S)|^2
 \\
& = \dintt{} \mu \left( \sum_{i=1}^2 \eta_i(\xi_i(\widetilde R,\widetilde S)-\xi_i(R,S)) \right) (\widetilde S - S) -  \dintt{} \partial_t|\widetilde S-S|^2 \\
& \le  \mu \eta_2 C_T \dintt{}  \left( |\widetilde R - R| +|\widetilde S-S| \right)  |\widetilde S - S| - \int_\Omega |\widetilde S-S|^2 + \int_\Omega |\widetilde S_0-S_0|^2 \\
& \le  \mu \eta_2 C_{T} \dintt{} \left( |\widetilde R-R |^2 + |\widetilde S-S |^2 \right) - \int_\Omega |\widetilde S-S|^2 + C \|\widetilde S_0 - S_0\|_{W^{2,1}_{2+}(\Omega)}^2,  
\end{aligned}
\label{Proof.LemDifficultTerm}
\end{equation}
where we have used \eqref{Proof.LInfDifferenceStep5}. The proof is completed by gathering the above estimates.
\end{proof}

We are ready to prove Theorem \ref{Theo.WellposedLimitSys}. 
  
\begin{proof}[Proof of Theorem \ref{Theo.WellposedLimitSys}]  Suppose that $(\widetilde{R}, \widetilde{S}) \in L^{4+}(\Omega_T)\times W^{(2,1)}_{4+}(\Omega_T)$ is a weak solution to   (\ref{System.LimitingSys}) with respect to the initial data $(\widetilde R_0,\widetilde S_0)$. We will prove that it will coincide with the solution $(R,S)$. We first recall that $\widetilde S, S$ and $|\nabla \widetilde S|, |\nabla S|$ are bounded in $L^\infty(\Omega_T)$ due to the heat regularisation.  By direct computations,  
\begin{align*}
\big(h(\widetilde{R},\widetilde{S}) - h(R,S)\big)(\widetilde{R}-R) & = -   (\gamma_1 \xi_1(\widetilde R,\widetilde S) +\gamma_2 \xi_2(\widetilde R,\widetilde S) )(\widetilde{R}-R)^2 + J(x,t)
\end{align*}
in which 
\begin{align*}
 J(x,t) := \,&  (\gamma_1\Rhat-\eta_1-\gamma_1 R)(\xi_1(\widetilde R,\widetilde S) -  \xi_1(R,S))(\widetilde{R}-R)\\
 + \,&  (\gamma_2\Rhat-\eta_2-\gamma_2 R)(\xi_2(\widetilde R,\widetilde S) -  \xi_2(R,S))(\widetilde{R}-R).  
\end{align*}
Now, by taking the test function $\varphi= \widetilde R -R$ in \eqref{System.WeakSolution} 
\begin{equation}
\begin{aligned}
& \frac{1}{2}  \dintt{}  \partial_t(\widetilde{R}-R)^2  + \dintt{} \nabla \bigg( \bigg( d_{R_1} - (d_{R_1}-d_{R_2})\frac{f(\widetilde S)}{f(\widetilde S)+g(\widetilde S)} \bigg) (\widetilde{R}-R) \bigg) \cdot \nabla (\widetilde{R}-R) \\
& \le (d_{R_1}-d_{R_2}) \dintt{} \nabla \bigg( \bigg( \frac{f(\widetilde S)}{f(\widetilde S)+g(\widetilde S)} - \frac{f(S)}{f(S)+g(S)} \bigg) R \bigg) \cdot \nabla (\widetilde{R}-R) + \dintt{} J(x,\tau) . 
\end{aligned}
\label{Proof.TheoUniqueStep1}
\end{equation}
For the term $J$, since $R\in L^\infty(\Omega_T)$ as Lemma \ref{Lem.LInfinityEstimate}, 
it is clear that 
\begin{align*}
\dintt{} J
& \le \frac{1}{2} \left( \|\gamma_1\Rhat-\eta_1-\gamma_1 R \|_{L^\infty(\Omega_T)}^2 \dintt{} | \xi_1(\widetilde R,\widetilde S) - \xi_1(R,S) |^2 +  \dintt{} |\widetilde{R}-R |^2 \right) \\ 
& + \frac{1}{2} \left( \|\gamma_2\Rhat-\eta_2-\gamma_2 R \|_{L^\infty(\Omega_T)}^2 \dintt{} | \xi_2(\widetilde R, \widetilde S) - \xi_2(R,S) |^2 +  \dintt{} |\widetilde{R}-R |^2 \right) ,
\end{align*} 
which, thanks to \eqref{Proof.LInfDifferenceStep5}, yields 
\begin{align}
\dintt{} J \le C_T\dintt{} \left( |\widetilde R-R |^2 + |\widetilde S-S |^2 \right). 
\label{Proof.TheoUniqueStep2}
\end{align}

We proceed to treat the terms including gradients. Let us begin with the term on the left-hand side of \eqref{Proof.TheoUniqueStep1}. A similar estimate as  \eqref{Proof.LemLInfinityStep2} shows
\begin{equation}
\begin{aligned}
& \dintt{} \nabla \bigg( \bigg( d_{R_1} - (d_{R_1}-d_{R_2})\frac{f(\widetilde S)}{f(\widetilde S)+g(\widetilde S)} \bigg) (\widetilde{R}-R) \bigg) \cdot \nabla (\widetilde{R}-R)\\
& = \dintt{} \bigg( d_{R_1} - (d_{R_1}-d_{R_2})\frac{f(\widetilde S)}{f(\widetilde S)+g(\widetilde S)}  \bigg) |\nabla (\widetilde{R}-R)|^2 \\
& - (d_{R_1}-d_{R_2}) \dintt{} (\widetilde{R}-R) \nabla \bigg( \frac{f(\widetilde S)}{f(\widetilde S)+g(\widetilde S)}  \bigg) \cdot \nabla (\widetilde{R}-R) \\
& \ge d_{R_2}  \dintt{} |\nabla (\widetilde{R}-R)|^2
- C_{T}\|\nabla S\|_{L^\infty(\Omega_T)^2} \int_\Omega |\widetilde{R}-R| |\nabla (\widetilde{R}-R)| \\
& \ge   \frac{4d_{R_2}}{5}  \dintt{} |\nabla (\widetilde{R}-R)|^2
-  \frac{5C_{T}^2\|\nabla S\|_{L^\infty(\Omega_T)^2}^2}{4} \dintt{} |\widetilde{R}-R|^2 , 
\end{aligned}
\label{Proof.TheoUniqueStep3}
\end{equation}
where noting that $d_{R_1} - (d_{R_1}-d_{R_2}) f(\widetilde S)/(f(\widetilde S)+g(\widetilde S)) \ge d_{R_2}.$ 
Next, let us consider the term with gradients on the right hand side of \eqref{Proof.TheoUniqueStep1}, which can be treated by applying Lemma \ref{Lem.DifficultTerm}. 
For this term, the computation   
\begin{align*}
& \nabla \bigg( \frac{f(\widetilde S)}{f(\widetilde S)+g(\widetilde S)} - \frac{f(S)}{f(S)+g(S)} \bigg) \\
& =  \frac{1}{(f(\widetilde S)+g(\widetilde S))^2} \Big( (f'(\widetilde S)-f'(S))g(\widetilde S) \nabla \widetilde S +  f'(S)(g(\widetilde S)-g(S)) \nabla \widetilde S + f'(S)g(S) \nabla (\widetilde S - S) \Big) \\
& - \frac{1}{(f(\widetilde S)+g(\widetilde S))^2} \Big( (f(\widetilde S)-f(S))g'(\widetilde S) \nabla \widetilde S +  f(S)(g'(\widetilde S)-g'(S)) \nabla \widetilde S + f(S)g'(S) \nabla (\widetilde S - S) \Big) \\
& +  \frac{f(\widetilde S)+g(\widetilde S)+f(S)+g(S) }{(f(\widetilde S)+g(\widetilde S))^2(f(S)+g(S))^2} \nabla S \Big(f'(S)g(S) - f(S)g'(S)\Big) \Big( f(\widetilde S)-f(S)+ g(\widetilde S)-g(S)  \Big),  
\end{align*}
the continuity of $f,f',g,g'$ on $\R_+$ (Assumption \ref{Assumption.AllinOne}), and the positively lower boundedness of $f(S)+g(S)$, $f(\widetilde S)+g(\widetilde S)$ (similarly as \eqref{Proof.LemLInfinityStep2}) yield that   
\begin{align}
\left|\nabla \bigg( \frac{f(\widetilde S)}{f(\widetilde S)+g(\widetilde S)} - \frac{f(S)}{f(S)+g(S)} \bigg)\right| \le C_{T,\|(S,\nabla S,\widetilde S,\nabla \widetilde S)\|_{L^\infty(\Omega_T)^6}} |\widetilde S - S|,
\label{Proof.TheoUniqueStep4}
\end{align}
where we recall the boundedness of $\nabla S, \nabla \widetilde S$ from Lemma \ref{Lem.LInfinityEstimate} and the estimate \eqref{Proof.LInfDifferenceStep2}. This can be plugged into the following estimate 
\begin{align*}
& \dintt{} R \nabla \bigg( \frac{f(\widetilde S)}{f(\widetilde S)+g(\widetilde S)} - \frac{f(S)}{f(S)+g(S)} \bigg) \cdot \nabla (\widetilde{R}-R) \\
& \le C_{T}\|R\|_{L^\infty(\Omega_T)}^2  \dintt{} \bigg| \nabla \bigg( \frac{f(\widetilde S)}{f(\widetilde S)+g(\widetilde S)} - \frac{f(S)}{f(S)+g(S)} \bigg)\bigg|^2 + \frac{d_{R_2}}{5} \dintt{} |\nabla (\widetilde{R}-R)|^2  \\
&\le C_{T,\|(S,\nabla S,\widetilde S,\nabla \widetilde S)\|_{L^\infty(\Omega_T)^6}} \|R\|_{L^\infty(\Omega_T)}^2   
\left( \dintt{} |\widetilde S - S|^2 \right) + \frac{d_{R_2}}{5} \dintt{} |\nabla (\widetilde{R}-R)|^2.
\end{align*}
Here, we recall that $R$ is bounded in $L^\infty(\Omega_T)$ from Lemma \ref{Lem.LInfinityEstimate}.
Thanks to Lemma \ref{Lem.DifficultTerm},  
\begin{equation} 
\begin{aligned}
& \dintt{} \nabla \bigg( \bigg( \frac{f(\widetilde S)}{f(\widetilde S)+g(\widetilde S)} - \frac{f(S)}{f(S)+g(S)} \bigg) R \bigg) \cdot \nabla (\widetilde{R}-R) \\
& = \dintt{} \bigg( \frac{f(\widetilde S)}{f(\widetilde S)+g(\widetilde S)} - \frac{f(S)}{f(S)+g(S)} \bigg) \nabla R \cdot \nabla (\widetilde{R}-R) \\
& + \dintt{} R \nabla \bigg( \frac{f(\widetilde S)}{f(\widetilde S)+g(\widetilde S)} - \frac{f(S)}{f(S)+g(S)} \bigg) \cdot \nabla (\widetilde{R}-R) \\
&\le C_{T} \dintt{} \left( |\widetilde R-R |^2 + |\widetilde S-S |^2 \right) + \frac{2d_{R_2}}{5} \dintt{}  |\nabla (\widetilde{R}-R)|^2 \\
& + C_T \|\widetilde S_0 - S_0\|_{W^{2,1}_{2+}(\Omega)}^2 . 
\end{aligned}
\label{Proof.TheoUniqueStep5}
\end{equation}
Now, we also note from the proof of Lemma \ref{Lem.DifficultTerm}, specifically the estimate \eqref{Proof.LemDifficultTerm},   that 
\begin{align*}
& \int_\Omega |\widetilde S-S|^2 +  d_S\dintt{} |\nabla (\widetilde S - S)|^2 \\
&
 \le C \|\widetilde S_0 - S_0\|_{W^{2,1}_{2+}(\Omega)}^2 + C_{T} \dintt{} \left( |\widetilde R-R |^2 + |\widetilde S-S |^2 \right) .  
\end{align*}

Taking all the estimates above, we arrive at  
\begin{align*}
& \int_\Omega \left( |\widetilde R - R|^2 + |\widetilde S - S|^2 \right) + \frac{d_{R_2}}{5} \dintt{} |\nabla(\widetilde R-R) |^2 +   \dintt{}  |\nabla(\widetilde S-S) |^2  \\
& \le C \|\widetilde R_0 - R_0\|_{L^2(\Omega)}^2 + C\|\widetilde S_0 - S_0\|_{W^{2,1}_{2+}(\Omega)}^2  +  C_T\dintt{} \left( |\widetilde R-R |^2 + |\widetilde S-S |^2 \right), 
\end{align*}
which after applying the Gr\"onwall inequality yields $(\widetilde R, \widetilde S) \equiv (R,S)$ if they have the same initial data, i.e. the uniqueness is proved. Moreover, we also have the following stability estimate for the first component of the solution
\begin{align}
 \|\widetilde R - R\|_{L^2(0,T;H^1(\Omega))} + \|\widetilde S -S \|_{L^{2}(\Omega_T)}  \le  C_T \|\widetilde R_0-R_0\|_{L^{2}(\Omega)} + C_T \|\widetilde S_0-S_0\|_{W^{2,1+}_{2+}(\Omega)}. 
\label{Proof.TheoUniqueStep6}
\end{align}
To have the stability for the second component of the solution, we note that  
\begin{align}
  \|\widetilde S-S\|_{W^{2,1}_{2}(\Omega_T)}  \le C_T \left( \|\widetilde R -R \|_{L^{2}(\Omega_T)} + \|\widetilde S -S \|_{L^{2}(\Omega_T)} \right) + C_T \|\widetilde S_0-S_0\|_{W^{2,1+}_{2+}(\Omega)} 
 \label{Proof.TheoUniqueStep7}
\end{align} 
due to an application of Lemma \ref{Lem.HeatRegularisation} to the equation \eqref{Equation.S} and using the estimate \eqref{Proof.LInfDifferenceStep5}. The stability \eqref{Estimate.Stability} is obtained by combining \eqref{Proof.TheoUniqueStep6} and \eqref{Proof.TheoUniqueStep7}. With $R,S\in L^\infty(\Omega_T)$, the claim $S\in W^{2,1}_q(\Omega_T)$, for any  $1\le q<\infty$, is directly obtained by applying Lemma \ref{Lem.HeatRegularisation}.  

\medskip

Next, we will prove that $(R,S)$ is also the strong solution to \eqref{System.LimitingSys}-\eqref{Condition.LimInitialCond}.  Since the weak solution is smooth enough, especially the component $S$, it suggests to rewrite the equation for $R$ as follows
\begin{align*}
\partial_t R - a(S)\Delta R = b(S)\cdot \nabla R + c(S)R + h(R,S) ,
\end{align*}
where  
\begin{align*}
a(S):= d_{R_1}-(d_{R_1}-d_{R_2}) \frac{f(S)}{f(S)+g(S)},
\end{align*}
and 
\begin{align*}
b_i(S):= 2 \frac{\partial a}{\partial S} \nabla S, \quad c(S):= \frac{\partial a}{\partial S} \Delta S + \frac{\partial^2 a}{\partial S^2} |\nabla S|^2.
\end{align*}
We recall from the proof of Theorem \ref{Theo.ConvergeToLimitSys} that $\nabla S\in L^\infty(\Omega_T) \cap W^{2,1}_q(\Omega_T)$ for any $1\le q<\infty$. Then,  $b(S)\in L^\infty(\Omega_T)^2$ and $c(S)\in L^q(\Omega_T)$. Moreover, by the Sobolev embedding, the function $S$ is bounded and uniformly continuous, and so is $a(S)$. Let $2\le q<\infty$. By the Aubin-Lions inequality, for any $\vartheta>0$ there exists $C_{\vartheta}>0$ such that 
\begin{align*}
\|\nabla R\|_{L^{q}(\Omega)} \le \vartheta \|\Delta R\|_{L^{2}(\Omega)} + C_{\vartheta} \|R\|_{L^{\infty}(\Omega)}.
\end{align*}
This subsequently shows
\begin{align*}
\|\nabla R\|_{L^{q}(\Omega_T)} \le \vartheta \|\Delta R\|_{L^q(\Omega_T)} + C_{\vartheta} \|R\|_{L^{\infty}(\Omega_T)}, 
\end{align*}
and therefore,
\begin{align*}
\|b(S)\cdot \nabla R + c(S)R + h(R,S)\|_{L^{q}(\Omega_T)} \le  C_{T,\vartheta} +  \vartheta C_T \|\Delta R\|_{L^q(\Omega_T)} . 
\end{align*}
Note that the operator $R \mapsto -a(S)\Delta R$ is uniformly elliptic since $a(S)\ge d_{R_2}>0$. Hence,  by the  maximal regularity for parabolic equations, see e.g.  \cite[Remark 51.5]{quittner2019superlinear}, 
\begin{align*}
& \|\partial_t R\|_{L^{q}(\Omega_T)} + \|\Delta R\|_{L^{q}(\Omega_T)}  \\
& \le C \left( \|R_{10}+R_{20}\|_{W^{2,q}(\Omega)} + \|b(S)\cdot \nabla R + c(S)R + h(R,S)\|_{L^{q}(\Omega_T)} \right) \\
& \le C \left( \|R_{10}+R_{20}\|_{W^{2,q}(\Omega)} + C_{T,\vartheta} +  \vartheta C_T \|\Delta R\|_{L^q(\Omega_T)} \right).
\end{align*}
With a sufficiently small choice of $\vartheta$, we get 
 \begin{align*}
 \|\partial_t R\|_{L^{q}(\Omega_T)} + \|\Delta R\|_{L^{q}(\Omega_T)}  \le C_{T,\vartheta}   \|R_{10}+R_{20}\|_{W^{2,q}(\Omega)} + C_{T,\vartheta}, 
\end{align*}
 and complete the proof. 
\end{proof}

\section{Convergence rate}\label{sec4}

\subsection{Projection onto a lower regularity space}
 
Let $(R^\varepsilon,S^\varepsilon)$ be the $\varepsilon$-dependent solution to the system \eqref{System.OrginalSys}-\eqref{Condition.Initial}, and  $(R,S)$ be the unique strong solution the limiting system \eqref{System.LimitingSys}  obtained by Theorem \ref{Theo.WellposedLimitSys}.  This section is devoted to study the convergence rate  $(|R^\varepsilon - R|,|S^\varepsilon-S|)$ in $L^2(\Omega_T)$, where the more challenging is the first component.   

\medskip

The main idea here is to project the equation for $R^\varepsilon - R$ onto a lower regularity space. More precisely, we fix a constant $\zeta>0$ and denote $-\Delta_\zeta:=-(\Delta-\zeta I)$. Then, the   Neumann boundary problem for the Poisson equation  
\begin{align}
\Delta_\zeta u = \phi \quad \text{in }  \Omega, \qquad
\nabla u \cdot \nu = 0 \quad \text{on }  \partial\Omega, 
\nonumber
\end{align}
with a given function $\phi \in L^2(\Omega)$, has a unique solution $u=\Delta_\zeta^{-1} \phi \in H^2(\Omega)$, see e.g. H. Brezis \cite[Chapter 9]{brezis2011functional}.  The following lemma is obvious since we have, thanks to the previous sections, $R,R^\varepsilon \in L^2(0,T;H^2(\Omega))$ as well as $h(R,S), h(R^\varepsilon,S^\varepsilon)\in L^2(\Omega_T)$.

\begin{lemma} \label{Lem.UXY} For each $\varepsilon>0$ and $t\in [0,T]$, the Neumann boundary problems for Poisson equations of finding $U^\varepsilon$, $X^\varepsilon$, $Y^\varepsilon$ such that 
\begin{align}
\Delta_\zeta U^\varepsilon = R^\varepsilon - R \hspace*{4cm} \text{in }  \Omega, \qquad
\nabla U^\varepsilon \cdot \nu = 0 \quad \text{on }  \partial\Omega, 
\label{Problem.U}\\
\Delta_\zeta X^\varepsilon = d_{R_1}(R_1^\varepsilon - \xi_1) + d_{R_2}(R_2^\varepsilon - \xi_2) \quad \text{in }  \Omega, \qquad
\nabla X^\varepsilon \cdot \nu = 0 \quad \text{on }  \partial\Omega, 
\label{Problem.X} \\
 \Delta_\zeta Y^\varepsilon = h(R^\varepsilon,S^\varepsilon) - h(R,S) \hspace*{1.95cm} \text{in }  \Omega, \qquad
\nabla Y^\varepsilon \cdot \nu = 0 \quad \text{on }  \partial\Omega, 
\label{Problem.Y}
\end{align}
have their uniquely strong solutions $U^\varepsilon(t,\cdot)$, $X^\varepsilon(t,\cdot)$, and $Y^\varepsilon(t,\cdot)$ such that 
\begin{gather*}
U^\varepsilon = \Delta_\zeta^{-1}(R^\varepsilon-R) \in L^2(0,T;H^4(\Omega)), \\
X^\varepsilon = \Delta_\zeta^{-1} \Big(d_{R_1}(R_1^\varepsilon - \xi_1) + d_{R_2}(R_2^\varepsilon - \xi_2)\Big) \in L^2(0,T;H^2(\Omega)), \\ 
Y^\varepsilon = \Delta_\zeta^{-1} (h(R^\varepsilon,S^\varepsilon) - h(R,S)) \in L^2(0,T;H^2(\Omega)) .
\end{gather*}
\end{lemma}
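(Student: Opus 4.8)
The plan is to solve each of the three problems slice-by-slice in time, invoking the elliptic theory for the shifted Neumann Laplacian cited from \cite{brezis2011functional}, and then to integrate the resulting a priori estimates against $t$ to obtain the stated Bochner regularity. The decisive structural point is that $\zeta>0$: the operator $-\Delta_\zeta=-(\Delta-\zeta I)$ together with the homogeneous Neumann condition has trivial kernel (the constants, which span the kernel of $-\Delta$ under Neumann conditions, are removed by the $\zeta I$ term), so for every $\phi\in L^2(\Omega)$ the problem $\Delta_\zeta u=\phi$, $\nabla u\cdot\nu=0$, is uniquely solvable with $u=\Delta_\zeta^{-1}\phi\in H^2(\Omega)$ and $\|u\|_{H^2(\Omega)}\le C\|\phi\|_{L^2(\Omega)}$, \emph{without} any mean-zero compatibility condition on $\phi$. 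This is exactly what makes all three problems well posed for each fixed $t$.

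First I would verify that the right-hand sides lie in the appropriate spaces. For the $X^\varepsilon$- and $Y^\varepsilon$-problems it suffices to observe that all data belong to $L^\infty(\Omega_T)\subset L^2(\Omega_T)$: indeed $R_1^\varepsilon,R_2^\varepsilon$ are (uniformly) bounded, $R\in L^\infty(\Omega_T)$ by Lemma \ref{Lem.LInfinityEstimate}, and $S,S^\varepsilon$ are bounded above and below away from $0$ by Lemmas \ref{Lem.UniformRegularity} and \ref{Lem.LowerBound}, so that $\xi_1,\xi_2$ and $h(R^\varepsilon,S^\varepsilon)-h(R,S)$ are all bounded. Hence, for a.e.\ $t\in(0,T)$, the data of \eqref{Problem.X} and \eqref{Problem.Y} lie in $L^2(\Omega)$, giving $X^\varepsilon(t,\cdot),Y^\varepsilon(t,\cdot)\in H^2(\Omega)$ together with the slice-wise bound $\|X^\varepsilon(t)\|_{H^2(\Omega)}^2+\|Y^\varepsilon(t)\|_{H^2(\Omega)}^2\le C_T$; integrating in $t$ yields $X^\varepsilon,Y^\varepsilon\in L^2(0,T;H^2(\Omega))$.

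For $U^\varepsilon$ the argument gains two further derivatives because its datum is already $H^2$ in space. From the well-posedness of the $\varepsilon$-system one has $R_1^\varepsilon,R_2^\varepsilon\in W^{2,1}_p(\Omega_T)\hookrightarrow L^2(0,T;H^2(\Omega))$ for $p>2$, while Theorem \ref{Theo.WellposedLimitSys}, through \eqref{STRONGregularity}, gives $R\in W^{2,1}_q(\Omega_T)\hookrightarrow L^2(0,T;H^2(\Omega))$; hence $R^\varepsilon-R\in L^2(0,T;H^2(\Omega))$. Applying the higher-order elliptic regularity for the Neumann problem on the smooth domain, namely $\|\Delta_\zeta^{-1}\phi\|_{H^4(\Omega)}\le C\|\phi\|_{H^2(\Omega)}$, slice-wise and then squaring and integrating in $t$, produces $U^\varepsilon\in L^2(0,T;H^4(\Omega))$. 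Measurability of the maps $t\mapsto U^\varepsilon(t),X^\varepsilon(t),Y^\varepsilon(t)$ is automatic, since they are the compositions of the Bochner-measurable data with the fixed bounded linear operator $\Delta_\zeta^{-1}$.

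The computations are entirely routine, so no serious obstacle arises; the only points requiring care are (i) recording that $\zeta>0$ removes the constant kernel, so that no compatibility condition on the data is needed for unique solvability, and (ii) checking that the datum of \eqref{Problem.U} genuinely belongs to $L^2(0,T;H^2(\Omega))$. The latter is precisely where the strong-solution regularity \eqref{STRONGregularity} of the limiting system and the $W^{2,1}_p$-regularity of the $\varepsilon$-system enter, and it is what supplies the two extra derivatives distinguishing $U^\varepsilon\in L^2(0,T;H^4(\Omega))$ from $X^\varepsilon,Y^\varepsilon\in L^2(0,T;H^2(\Omega))$.
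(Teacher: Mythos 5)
Your proposal is correct and follows essentially the same route as the paper, which simply declares the lemma obvious from the unique solvability of the shifted Neumann problem (citing Brezis, Chapter 9) together with $R,R^\varepsilon\in L^2(0,T;H^2(\Omega))$ and $h(R,S),h(R^\varepsilon,S^\varepsilon)\in L^2(\Omega_T)$; your slice-wise elliptic argument, the observation that $\zeta>0$ removes the constant kernel so no compatibility condition is needed, and the two extra derivatives for $U^\varepsilon$ coming from the $W^{2,1}_p$-regularity of $R^\varepsilon$ and \eqref{STRONGregularity} are exactly the intended content. The only slip is the parenthetical claim that $R_1^\varepsilon,R_2^\varepsilon$ are \emph{uniformly} bounded in $L^\infty(\Omega_T)$, which Remark \ref{Remark.FastReactionLim} explicitly disclaims; for fixed $\varepsilon$ the $L^\infty$ bound holds by the well-posedness proposition, and in any case the uniform $L^q(\Omega_T)$ bounds of Lemma \ref{Lem.Bootstrap} already suffice for the membership claims of this lemma.
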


In order to get the equation for $R^\varepsilon - R$, we can add the equations for $R_1^\varepsilon$, $R_2^\varepsilon$, and then subtract the result equation by the equation for $R$, which arrive at 
\begin{align*}
 \partial_t (R^\varepsilon-R) =   \Delta \left( \sum_{i=1}^2 d_{R_i}(R_i^\varepsilon - \xi_i) \right) +  (h(R^\varepsilon,S^\varepsilon)- h(R,S)).  
\end{align*}
Then, with notations in Lemma \ref{Lem.UXY}, 
\begin{align}
 \partial_t U^\varepsilon =  (I+\zeta \Delta_\zeta^{-1})\Delta_\zeta X^\varepsilon + Y^\varepsilon. \label{Equation.Projection} 
\end{align} 
Instead of estimating the rate $\|R^\varepsilon-R\|_{L^2(\Omega_T)}$, we will estimate $\|\Delta_\zeta U^\varepsilon \|_{L^2(\Omega_T)}$. This will be based on the projected equation \eqref{Equation.Projection} upon multiplying two sides by $-\Delta_\zeta U^\varepsilon$. In the next section, we will estimate terms that appear according to this multiplication and then obtain the convergence rate. 

\subsection{Convergence rate of the fast reaction limit}

In this part, for short, we denote 
\begin{align*}
\begin{array}{cccccc}
& f^\varepsilon := f(S^\varepsilon), & g^\varepsilon := g(S^\varepsilon), & f^0 := f(S), & g^0:= g(S),\\
&(f^\varepsilon)' := f'(S^\varepsilon), & (g^\varepsilon)' := g'(S^\varepsilon), & (f^0)' := f'(S), & (g^0):= g'(S),
\end{array}
\end{align*}
and prefer to write $\xi_1:=\xi_1(R,S)$, $\xi_2:=\xi_2(R,S)$ instead of $R_1,R_2$ respectively. 

\begin{lemma}
\label{Lem.ConvRateMixterm} Let $U^\varepsilon, X^\varepsilon$ be defined by Lemma \ref{Lem.UXY}. Then, for all $t\in(0,T)$, 
\begin{align*}
- \dintt{}  \Delta_\zeta U^\varepsilon (I+\zeta \Delta_\zeta^{-1}) \Delta_\zeta X^\varepsilon & \le   \frac{(d_{R_1}-d_{R_2})^2}{d_{R_2}} C_T \dintt{} \left( |f^\varepsilon R_1^\varepsilon   - g^\varepsilon R_2^\varepsilon|^2  + |S^\varepsilon - S |^2 \right) 
\\ 
& - \frac{d_{R_2}}{2} \dintt{}  |\Delta_\zeta U^\varepsilon|^2  + \frac{\zeta(\zeta+4)}{4} d_{R_2} \dintt{}     |U^\varepsilon|^2. 
\end{align*}
\end{lemma}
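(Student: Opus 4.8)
The plan is to first use the self-adjointness of $\Delta_\zeta^{-1}$ (with homogeneous Neumann conditions) to collapse the operator $(I+\zeta\Delta_\zeta^{-1})$ and reduce the whole quantity to a single, cleaner integral. Since $U^\varepsilon,X^\varepsilon\in L^2(0,T;H^2(\Omega))$ satisfy Neumann conditions, one has $\dintt{}\Delta_\zeta U^\varepsilon X^\varepsilon=\dintt{}U^\varepsilon\Delta_\zeta X^\varepsilon$; combining this with $(I+\zeta\Delta_\zeta^{-1})\Delta_\zeta X^\varepsilon=\Delta_\zeta X^\varepsilon+\zeta X^\varepsilon$ and the pointwise identity $\Delta_\zeta U^\varepsilon+\zeta U^\varepsilon=\Delta U^\varepsilon$ gives
\[
-\dintt{}\Delta_\zeta U^\varepsilon(I+\zeta\Delta_\zeta^{-1})\Delta_\zeta X^\varepsilon=-\dintt{}(\Delta_\zeta U^\varepsilon+\zeta U^\varepsilon)\Delta_\zeta X^\varepsilon=-\dintt{}\Delta U^\varepsilon\,\Delta_\zeta X^\varepsilon .
\]
This removes the inverse operator entirely and is the structural simplification that makes the rest tractable.

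Next I would decompose $\Delta_\zeta X^\varepsilon=d_{R_1}(R_1^\varepsilon-\xi_1)+d_{R_2}(R_2^\varepsilon-\xi_2)$. The key identities are $R_1^\varepsilon-\xi_1(R^\varepsilon,S^\varepsilon)=\tfrac{f^\varepsilon R_1^\varepsilon-g^\varepsilon R_2^\varepsilon}{f^\varepsilon+g^\varepsilon}=-(R_2^\varepsilon-\xi_2(R^\varepsilon,S^\varepsilon))$, together with a splitting of $\xi_i(R^\varepsilon,S^\varepsilon)-\xi_i(R,S)$ into a part proportional to $R^\varepsilon-R=\Delta_\zeta U^\varepsilon$ and a part proportional to $S^\varepsilon-S$. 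Since the convex weights $\tfrac{g}{f+g},\tfrac{f}{f+g}$ sum to one, the $(R^\varepsilon-R)$ contributions collect into one coercive coefficient while the $S$-difference contributions cancel at leading order, yielding
\[
\Delta_\zeta X^\varepsilon=\theta^\varepsilon(R^\varepsilon-R)+(d_{R_1}-d_{R_2})\big(A^\varepsilon+\mathcal{S}^\varepsilon\big),\qquad \theta^\varepsilon:=\frac{d_{R_2}f^\varepsilon+d_{R_1}g^\varepsilon}{f^\varepsilon+g^\varepsilon}\in[d_{R_2},d_{R_1}],
\]
where $A^\varepsilon:=\tfrac{f^\varepsilon R_1^\varepsilon-g^\varepsilon R_2^\varepsilon}{f^\varepsilon+g^\varepsilon}$ and $\mathcal{S}^\varepsilon:=\big(\tfrac{g^\varepsilon}{f^\varepsilon+g^\varepsilon}-\tfrac{g^0}{f^0+g^0}\big)R$. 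Crucially both error pieces carry the prefactor $(d_{R_1}-d_{R_2})$, and using the lower bounds on $f^\varepsilon+g^\varepsilon$ (Lemma \ref{Lem.LowerBound} and \eqref{Calculate.EnergyStep2a}), the $L^\infty$-bound on $R$ (Lemma \ref{Lem.LInfinityEstimate}), and the Lagrange estimate \eqref{Proof.LInfDifferenceStep1}--\eqref{Proof.LInfDifferenceStep2}, one obtains $|A^\varepsilon|\le C_T|f^\varepsilon R_1^\varepsilon-g^\varepsilon R_2^\varepsilon|$ and $|\mathcal{S}^\varepsilon|\le C_T|S^\varepsilon-S|$. This is exactly what produces the $\tfrac{(d_{R_1}-d_{R_2})^2}{d_{R_2}}$ factor and the two quantities appearing in the first integral on the right-hand side.

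Inserting this into $-\dintt{}\Delta U^\varepsilon\Delta_\zeta X^\varepsilon$ and writing $\Delta U^\varepsilon=\Delta_\zeta U^\varepsilon+\zeta U^\varepsilon$ with $\Delta_\zeta U^\varepsilon=R^\varepsilon-R$, the principal term is $-\dintt{}\theta^\varepsilon\Delta U^\varepsilon\Delta_\zeta U^\varepsilon=-\dintt{}\theta^\varepsilon|\Delta_\zeta U^\varepsilon|^2-\zeta\dintt{}\theta^\varepsilon U^\varepsilon\Delta_\zeta U^\varepsilon$: the first piece contributes the full coercive term $-d_{R_2}\dintt{}|\Delta_\zeta U^\varepsilon|^2$ since $\theta^\varepsilon\ge d_{R_2}$, and the second is handled by Young's inequality using $d_{R_2}\le\theta^\varepsilon\le d_{R_1}$, producing a controlled fraction of $\dintt{}|\Delta_\zeta U^\varepsilon|^2$ together with the $\tfrac{\zeta(\zeta+4)}{4}d_{R_2}\dintt{}|U^\varepsilon|^2$ contribution. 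The error term $-(d_{R_1}-d_{R_2})\dintt{}\Delta U^\varepsilon(A^\varepsilon+\mathcal{S}^\varepsilon)$ is estimated again by Young, splitting $\Delta U^\varepsilon$ into $\Delta_\zeta U^\varepsilon$ and $\zeta U^\varepsilon$ and absorbing the part quadratic in $\Delta_\zeta U^\varepsilon$ into the remaining coercive budget, which leaves precisely $\tfrac{(d_{R_1}-d_{R_2})^2}{d_{R_2}}C_T\dintt{}(|f^\varepsilon R_1^\varepsilon-g^\varepsilon R_2^\varepsilon|^2+|S^\varepsilon-S|^2)$. Calibrating the Young parameters so that the accumulated coefficient of $\dintt{}|\Delta_\zeta U^\varepsilon|^2$ equals $\tfrac{d_{R_2}}{2}$ then yields the claim.

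The main obstacle is the bookkeeping of the $\dintt{}|\Delta_\zeta U^\varepsilon|^2$ budget: the \emph{only} source of coercivity is the single term $-\dintt{}\theta^\varepsilon|\Delta_\zeta U^\varepsilon|^2\le -d_{R_2}\dintt{}|\Delta_\zeta U^\varepsilon|^2$, so every Young splitting of the lower-order and error contributions eats into it, and the weights must be calibrated to leave exactly $\tfrac{d_{R_2}}{2}$ while routing all residual mass into the harmless $\dintt{}|U^\varepsilon|^2$ term, whose coefficient is what fixes the precise constant $\tfrac{\zeta(\zeta+4)}{4}d_{R_2}$. The conceptual crux upstream of this is the decomposition itself: one must retain the non-small part $\theta^\varepsilon(R^\varepsilon-R)$ explicitly, in order to exploit its favourable sign $\theta^\varepsilon\ge d_{R_2}$, rather than bounding $|R_1^\varepsilon-\xi_1|$ crudely, since a crude bound would generate an uncontrollable $\tfrac{(d_{R_1}-d_{R_2})^2}{d_{R_2}}C_T\dintt{}|R^\varepsilon-R|^2$ term with no coercivity left to absorb it.
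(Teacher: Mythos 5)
Your proposal is correct and follows essentially the same route as the paper: the identity you derive for $\Delta_\zeta X^\varepsilon$ (with $\theta^\varepsilon$ equal to $d_{R_1}-(d_{R_1}-d_{R_2})\frac{f^\varepsilon}{f^\varepsilon+g^\varepsilon}$) is exactly the paper's decomposition \eqref{Proof.LemConvRateMixterm1}, and the subsequent use of self-adjointness of $\Delta_\zeta^{-1}$, the coercivity $\theta^\varepsilon\ge d_{R_2}$, the bounds $|A^\varepsilon|\le C_T|f^\varepsilon R_1^\varepsilon-g^\varepsilon R_2^\varepsilon|$, $|\mathcal{S}^\varepsilon|\le C_T|S^\varepsilon-S|$, and the Young calibration all mirror the paper's argument (your up-front collapse to $-\dintt{}\Delta U^\varepsilon\,\Delta_\zeta X^\varepsilon$ is only a cosmetic reordering of where the integration by parts is applied). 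If anything, your version is slightly cleaner, since you retain the factor $R$ in $\mathcal{S}^\varepsilon$ explicitly, which the paper tacitly absorbs via the $L^\infty$-bound on $R$.
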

\begin{proof} By the notations in Lemma \ref{Lem.UXY} and direct computations, 
\begin{equation}
\begin{aligned}
\Delta_\zeta X^\varepsilon      
& =   (d_{R_1}-d_{R_2}) \left( \frac{f^\varepsilon R_1^\varepsilon   - g^\varepsilon R_2^\varepsilon}{f^\varepsilon + g^\varepsilon}  + \left( \frac{f^0}{f^0+g^0} - \frac{f^\varepsilon}{f^\varepsilon+g^\varepsilon} \right)\right)  \\
& + \left( d_{R_1}- (d_{R_1}-d_{R_2})\frac{f^\varepsilon}{f^\varepsilon+g^\varepsilon} \right)  \Delta_\zeta U^\varepsilon.  
\end{aligned}
\label{Proof.LemConvRateMixterm1}
\end{equation}
In addition, we can use  integration by parts to pass the operator $\Delta_\zeta$ from $U^\varepsilon$ to $X^\varepsilon$ as  
\begin{align*}
\dintt{}  \Delta_\zeta U^\varepsilon   X^\varepsilon = \dintt{}   U^\varepsilon  \Delta_\zeta X^\varepsilon. 
\end{align*} 
These together yield that 
\begin{equation}
\begin{aligned}
& - \dintt{}  \Delta_\zeta U^\varepsilon (I+\zeta \Delta_\zeta^{-1}) \Delta_\zeta X^\varepsilon \\
& = - (d_{R_1}-d_{R_2}) \dintt{} \left( \frac{f^\varepsilon R_1^\varepsilon   - g^\varepsilon R_2^\varepsilon}{f^\varepsilon + g^\varepsilon}  + \left( \frac{f^0}{f^0+g^0} - \frac{f^\varepsilon}{f^\varepsilon+g^\varepsilon} \right)\right) \Delta_\zeta U^\varepsilon  \\
& - \dintt{} \left( d_{R_1}- (d_{R_1}-d_{R_2})\frac{f^\varepsilon}{f^\varepsilon+g^\varepsilon} \right) |\Delta_\zeta U^\varepsilon|^2 - \zeta \dintt{}  \Delta_\zeta U^\varepsilon   X^\varepsilon \\
& \le  - (d_{R_1}-d_{R_2}) \dintt{} \left( \frac{f^\varepsilon R_1^\varepsilon   - g^\varepsilon R_2^\varepsilon}{f^\varepsilon + g^\varepsilon}  + \left( \frac{f^0}{f^0+g^0} - \frac{f^\varepsilon}{f^\varepsilon+g^\varepsilon} \right)\right) \Delta_\zeta U^\varepsilon  \\
& \hspace*{4.22cm} - d_{R_2} \dintt{}  |\Delta_\zeta U^\varepsilon|^2 - \zeta \dintt{}   U^\varepsilon \Delta_\zeta  X^\varepsilon,  
\end{aligned}
\label{Proof.LemConvRateMixterm2}
\end{equation}
where we have bounded $ d_{R_1}- (d_{R_1}-d_{R_2})f^\varepsilon/(f^\varepsilon+g^\varepsilon)$ from below by $d_{R_2}$. 
Let us consider the latter term in  \eqref{Proof.LemConvRateMixterm2}. 
By the computation \eqref{Proof.LemConvRateMixterm1} again, we have 
\begin{equation}
\begin{aligned}
-\zeta \dintt{}   U^\varepsilon \Delta_\zeta  X^\varepsilon &= -\zeta (d_{R_1}-d_{R_2}) \dintt{}    \left( \frac{f^\varepsilon R_1^\varepsilon   - g^\varepsilon R_2^\varepsilon}{f^\varepsilon + g^\varepsilon}  + \left( \frac{f^0}{f^0+g^0} - \frac{f^\varepsilon}{f^\varepsilon+g^\varepsilon} \right)\right) U^\varepsilon  \\
& - \zeta \dintt{}    \left( d_{R_1}- (d_{R_1}-d_{R_2})\frac{f^\varepsilon}{f^\varepsilon+g^\varepsilon} \right) U^\varepsilon \Delta_\zeta U^\varepsilon \\
&\le -\zeta (d_{R_1}-d_{R_2}) \dintt{}    \left( \frac{f^\varepsilon R_1^\varepsilon   - g^\varepsilon R_2^\varepsilon}{f^\varepsilon + g^\varepsilon}  + \left( \frac{f^0}{f^0+g^0} - \frac{f^\varepsilon}{f^\varepsilon+g^\varepsilon} \right)\right) U^\varepsilon  \\
& + \zeta d_{R_2} \dintt{}     |U^\varepsilon|^2 + \frac{d_{R_2}}{4} \dintt{} |\Delta_\zeta U^\varepsilon|^2. 
\end{aligned}
\label{Proof.LemConvRateMixterm3}
\end{equation}
Now, we recall that $|f^0/(f^0+g^0) - f^\varepsilon/(f^\varepsilon+g^\varepsilon)|$ can be bounded by $C_T|S^\varepsilon-S|$ due to a similar argument as \eqref{Proof.LInfDifferenceStep1}-\eqref{Proof.LInfDifferenceStep2}. 
  Therefore, \eqref{Proof.LemConvRateMixterm2}-\eqref{Proof.LemConvRateMixterm3} implies that 
\begin{equation}
\begin{aligned}
& - \dintt{}  \Delta_\zeta U^\varepsilon (I+\zeta \Delta_\zeta^{-1}) \Delta_\zeta X^\varepsilon \\
& \le  - (d_{R_1}-d_{R_2}) \dintt{} \left( \frac{f^\varepsilon R_1^\varepsilon   - g^\varepsilon R_2^\varepsilon}{f^\varepsilon + g^\varepsilon}  + \left( \frac{f^0}{f^0+g^0} - \frac{f^\varepsilon}{f^\varepsilon+g^\varepsilon} \right)\right)  \Delta U^\varepsilon  \\
& - \frac{3d_{R_2}}{4} \dintt{}  |\Delta_\zeta U^\varepsilon|^2  + \zeta d_{R_2} \dintt{}     |U^\varepsilon|^2  \\
& \le  \frac{2(d_{R_1}-d_{R_2})^2}{d_{R_2}}  \dintt{}  \left( \frac{f^\varepsilon R_1^\varepsilon   - g^\varepsilon R_2^\varepsilon}{f^\varepsilon + g^\varepsilon}  + \left( \frac{f^0}{f^0+g^0} - \frac{f^\varepsilon}{f^\varepsilon+g^\varepsilon} \right)\right)^2 
\\ 
& + \frac{d_{R_2}}{8} \dintt{}  |\Delta U^\varepsilon|^2 - \frac{3d_{R_2}}{4} \dintt{}  |\Delta_\zeta U^\varepsilon|^2  + \zeta d_{R_2} \dintt{}     |U^\varepsilon|^2  \\
& \le  \frac{(d_{R_1}-d_{R_2})^2}{d_{R_2}} C_T \dintt{} \left( |f^\varepsilon R_1^\varepsilon   - g^\varepsilon R_2^\varepsilon|^2  + |S^\varepsilon - S^0|^2 \right) 
\\ 
& \hspace*{2.2cm} - \frac{d_{R_2}}{2} \dintt{}  |\Delta_\zeta U^\varepsilon|^2  + \frac{\zeta(\zeta+4)}{4} d_{R_2} \dintt{} |U^\varepsilon|^2 ,
\end{aligned}
\end{equation}
where we note $|\Delta U^\varepsilon|^2 \le 2 |\Delta_\zeta U^\varepsilon|^2 + 2\zeta^2 |U^\varepsilon|^2$.  
\end{proof}

In the next lemma, based on estimating the fast reaction term in Lemma \ref{Lem.Bootstrap}, we will see that the component distances $|R_1^\varepsilon-R_1|$, $|R_2^\varepsilon-R_2|$ can be estimated whenever $|R^\varepsilon - R|$, $|S^\varepsilon-S|$ can be.

\begin{lemma} 
\label{Lem.AbsoluteDistance}
There exists $C_T>0$ so that
\begin{align}
|R_1^\varepsilon-\xi_1|  + |R_2^\varepsilon-\xi_2| \le C_T |f^\varepsilon R_1^\varepsilon   - g^\varepsilon R_2^\varepsilon| + C_T \Big( |R^\varepsilon - R| + |S^\varepsilon-S| \Big)   
\label{Estimate.AbsoluteDistance}
\end{align}
pointwise on $\Omega_T$.
\end{lemma}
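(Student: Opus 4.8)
The plan is to insert, between $R_1^\varepsilon$ and $\xi_1$, the intermediate quantity $g^\varepsilon R^\varepsilon/(f^\varepsilon+g^\varepsilon)$ obtained by solving the fast-reaction constraint exactly at fixed $\varepsilon$; this is precisely the splitting already used in \eqref{Proof.TheoFSL.Step6}. Writing $R^\varepsilon=R_1^\varepsilon+R_2^\varepsilon$, a one-line algebraic identity gives
\begin{align*}
R_1^\varepsilon - \frac{g^\varepsilon}{f^\varepsilon+g^\varepsilon}R^\varepsilon = \frac{f^\varepsilon R_1^\varepsilon - g^\varepsilon R_2^\varepsilon}{f^\varepsilon+g^\varepsilon}, \qquad R_2^\varepsilon - \frac{f^\varepsilon}{f^\varepsilon+g^\varepsilon}R^\varepsilon = -\frac{f^\varepsilon R_1^\varepsilon - g^\varepsilon R_2^\varepsilon}{f^\varepsilon+g^\varepsilon}.
\end{align*}
Since $f^\varepsilon+g^\varepsilon$ has a uniform positive lower bound --- which follows exactly as in \eqref{Calculate.EnergyStep2a}, invoking the two-sided bound \eqref{Estimate.TwoSideBoundofSeps} on $S^\varepsilon$ --- both left-hand sides are controlled pointwise by $C_T\,|f^\varepsilon R_1^\varepsilon - g^\varepsilon R_2^\varepsilon|$.

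It then remains to compare the two intermediate quantities with $\xi_1,\xi_2$. For the first component I would decompose
\begin{align*}
\frac{g^\varepsilon}{f^\varepsilon+g^\varepsilon}R^\varepsilon - \xi_1 = \frac{g^\varepsilon}{f^\varepsilon+g^\varepsilon}(R^\varepsilon-R) + \left(\frac{g^\varepsilon}{f^\varepsilon+g^\varepsilon} - \frac{g^0}{f^0+g^0}\right)R.
\end{align*}
The first summand is bounded by $|R^\varepsilon-R|$ because $0\le g^\varepsilon/(f^\varepsilon+g^\varepsilon)\le 1$. For the second summand, the difference of the ratios is estimated by $C_T\,|S^\varepsilon-S|$ by exactly the argument behind \eqref{Proof.LInfDifferenceStep1}--\eqref{Proof.LInfDifferenceStep2} (a Lagrange mean-value bound on $f,g$ together with the lower bound on the denominators), while $R$ is bounded in $L^\infty(\Omega_T)$ by Lemma \ref{Lem.LInfinityEstimate}; hence this summand is $\le C_T\,|S^\varepsilon-S|$. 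Combining with the previous paragraph through the triangle inequality yields the claimed bound for $|R_1^\varepsilon-\xi_1|$, and the identical computation with the roles of $f$ and $g$ interchanged gives the bound for $|R_2^\varepsilon-\xi_2|$.

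There is no genuine obstacle here: the estimate is pointwise and purely algebraic once the two structural facts are in hand, namely the uniform positive lower bound on $f^\varepsilon+g^\varepsilon$ and the $L^\infty(\Omega_T)$-bound on the limit $R$. The only point to record carefully is that the constants $C_T$ must absorb $\|R\|_{L^\infty(\Omega_T)}$ and the Lipschitz constants of $f,g$ on the compact range $[s_0,s_\infty]$, so that the resulting inequality \eqref{Estimate.AbsoluteDistance} is uniform in $\varepsilon$ and holds for a.e.\ $(x,t)\in\Omega_T$.
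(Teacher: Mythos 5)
Your proof is correct and follows essentially the same route as the paper: the same splitting through the intermediate quantity $g^\varepsilon R^\varepsilon/(f^\varepsilon+g^\varepsilon)$ (whose deviation from $R_1^\varepsilon$ is exactly $(f^\varepsilon R_1^\varepsilon-g^\varepsilon R_2^\varepsilon)/(f^\varepsilon+g^\varepsilon)$), the same further decomposition into an $|R^\varepsilon-R|$ term and a ratio-difference term handled via the Lagrange bound and the uniform lower bound on $f^\varepsilon+g^\varepsilon$, and the same use of $\|R\|_{L^\infty(\Omega_T)}$. No gaps.
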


\begin{proof} Let $K_1^\varepsilon,K_2^\varepsilon$ be defined by \eqref{Proof.TheoFSL.Step4}. Since $R\in L^\infty(\Omega_T)$ (Lemma \ref{Lem.LInfinityEstimate}), we have
\begin{align*}
  |R_1^\varepsilon - \xi_1| + |R_2^\varepsilon - \xi_2|   
& \le  \left|R_1^\varepsilon - \frac{g^\varepsilon}{f^\varepsilon+g^\varepsilon} R^\varepsilon \right| +  \frac{g^\varepsilon}{f^\varepsilon+g^\varepsilon} |R^\varepsilon-R| + \left| K_2^\varepsilon R \right|  \\
& + \left|R_2^\varepsilon - \frac{f^\varepsilon}{f^\varepsilon+g^\varepsilon} R^\varepsilon \right| +  \frac{f^\varepsilon}{f^\varepsilon+g^\varepsilon} |R^\varepsilon-R| + \left| K_1^\varepsilon R \right|  \\
& \le 2   \bigg( \sup_{\varepsilon>0} \sup_{\Omega_T} \frac{1}{f^\varepsilon +g^\varepsilon} \bigg) | f^\varepsilon R_1^\varepsilon   - g^\varepsilon R_2^\varepsilon  | +  |R^\varepsilon-R| \\
& + 2 \bigg( \sup_{\varepsilon>0} \sup_{\Omega_T} \frac{1}{f^\varepsilon +g^\varepsilon} \bigg) \Big( |f^\varepsilon-f^0| + |g^\varepsilon-g^0| \Big) \|R\|_{L^\infty(\Omega_T)}.
\end{align*} 
The desired estimate follows from the estimate \eqref{Proof.TheoFSL.Step3}, and the uniform lower boundedness of $f^\varepsilon, g^\varepsilon$ from the proof of Lemma \ref{Lem.Feedback}.   
\end{proof}

\begin{lemma}
\label{Lem.NonTerm} There exists $C_T>0$ so that
\begin{align*}
-\dintt{}  U^\varepsilon (h(R^\varepsilon,S^\varepsilon) - h(R,S))   & \le C_{T} \dintt{} \left( |U^\varepsilon|^2 + |\nabla U^\varepsilon|^2 \right) + \frac{d_{R_2}}{4} \dintt{} |\Delta_\zeta U^\varepsilon|^2 \\
&+ C_T  \dintt{}  \Big(  |f^\varepsilon R_1^\varepsilon   - g^\varepsilon R_2^\varepsilon|^2   +  |S^\varepsilon-S|^2 \Big) , 
\end{align*}
for all $t\in(0,T)$.
\end{lemma}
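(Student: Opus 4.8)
The plan is to bound $-\dintt{} U^\varepsilon \bigl(h(R^\varepsilon,S^\varepsilon)-h(R,S)\bigr)$ by first rewriting the integrand algebraically, and then exploiting that $U^\varepsilon=\Delta_\zeta^{-1}(R^\varepsilon-R)$ is one degree smoother than $R^\varepsilon-R$. Writing $A_i^\varepsilon:=R_i^\varepsilon-\xi_i$ and recalling $\Delta_\zeta U^\varepsilon=R^\varepsilon-R$ pointwise, the elementary identity $R_i^\varepsilon(\Rhat-R^\varepsilon)-\xi_i(\Rhat-R)=\Rhat A_i^\varepsilon-A_i^\varepsilon R^\varepsilon-\xi_i(R^\varepsilon-R)$ yields
\begin{align*}
h(R^\varepsilon,S^\varepsilon)-h(R,S)=\sum_{i=1}^2\Big((\gamma_i\Rhat-\eta_i)A_i^\varepsilon-\gamma_i A_i^\varepsilon R^\varepsilon-\gamma_i\xi_i(R^\varepsilon-R)\Big).
\end{align*}
The first and third groups are linear in the small differences $A_i^\varepsilon$ and $R^\varepsilon-R=\Delta_\zeta U^\varepsilon$, with coefficients bounded in $L^\infty(\Omega_T)$ (using $\|R\|_{L^\infty(\Omega_T)}\le C_T$ from Lemma \ref{Lem.LInfinityEstimate}, so that $\xi_i$ is bounded). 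The genuinely quadratic group $\gamma_i A_i^\varepsilon R^\varepsilon$ is the main obstacle, precisely because $R^\varepsilon$ is not uniformly bounded in $L^\infty(\Omega_T)$ (cf. Remark \ref{Remark.FastReactionLim}); only uniform $L^q(\Omega_T)$ bounds are at hand.

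For the two linear groups I would multiply by $U^\varepsilon$, invoke the pointwise bound of Lemma \ref{Lem.AbsoluteDistance}, namely $|A_i^\varepsilon|\le C_T|f^\varepsilon R_1^\varepsilon-g^\varepsilon R_2^\varepsilon|+C_T(|\Delta_\zeta U^\varepsilon|+|S^\varepsilon-S|)$, and then apply Young's inequality term by term. The key is never to square $A_i^\varepsilon$ (which would create $\dintt{}|\Delta_\zeta U^\varepsilon|^2$ with a fixed large constant), but instead to keep each occurrence of $|\Delta_\zeta U^\varepsilon|=|R^\varepsilon-R|$ paired with $|U^\varepsilon|$: the estimate $\int_\Omega|U^\varepsilon||\Delta_\zeta U^\varepsilon|\le\epsilon_0\int_\Omega|\Delta_\zeta U^\varepsilon|^2+C_{\epsilon_0}\int_\Omega|U^\varepsilon|^2$ keeps the coefficient in front of $\dintt{}|\Delta_\zeta U^\varepsilon|^2$ as small as desired, while the factors $|f^\varepsilon R_1^\varepsilon-g^\varepsilon R_2^\varepsilon|$ and $|S^\varepsilon-S|$ are Young-paired against $|U^\varepsilon|$ and land directly on the right-hand side. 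The third group is handled identically, with the bounded $\xi_i$ playing the role of the coefficient.

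For the quadratic group I would use the extra regularity of $U^\varepsilon$. For a.e. $t$, Hölder's inequality gives $\int_\Omega|U^\varepsilon||A_i^\varepsilon||R^\varepsilon|\le\|U^\varepsilon\|_{L^4(\Omega)}\|A_i^\varepsilon\|_{L^2(\Omega)}\|R^\varepsilon\|_{L^4(\Omega)}$. The two-dimensional Sobolev embedding $H^1(\Omega)\hookrightarrow L^4(\Omega)$ bounds $\|U^\varepsilon\|_{L^4}$ by $\|U^\varepsilon\|_{L^2}+\|\nabla U^\varepsilon\|_{L^2}$ (this is where the $\nabla U^\varepsilon$ term in the statement comes from); $\sup_t\|R^\varepsilon(t)\|_{L^4(\Omega)}$ is uniformly controlled by the energy/bootstrap estimates (Lemmas \ref{Lem.Feedback}, \ref{Lem.Bootstrap}); and $\|A_i^\varepsilon\|_{L^2}$ is again bounded via Lemma \ref{Lem.AbsoluteDistance} by $\|f^\varepsilon R_1^\varepsilon-g^\varepsilon R_2^\varepsilon\|_{L^2}+\|\Delta_\zeta U^\varepsilon\|_{L^2}+\|S^\varepsilon-S\|_{L^2}$. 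Young's inequality then distributes the product: the factor $\|\Delta_\zeta U^\varepsilon\|_{L^2}$ pairs with $\|U^\varepsilon\|_{H^1}$ to give $\epsilon_0\|\Delta_\zeta U^\varepsilon\|_{L^2}^2+C_{\epsilon_0}(\|U^\varepsilon\|_{L^2}^2+\|\nabla U^\varepsilon\|_{L^2}^2)$, while the factors carrying $\|f^\varepsilon R_1^\varepsilon-g^\varepsilon R_2^\varepsilon\|_{L^2}$ and $\|S^\varepsilon-S\|_{L^2}$ are absorbed into the corresponding right-hand terms.

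Finally I would integrate these pointwise-in-$t$ bounds over $(0,t)$ and collect the free parameters $\epsilon_0$ from all three groups so that the total coefficient of $\dintt{}|\Delta_\zeta U^\varepsilon|^2$ is exactly $d_{R_2}/4$, which is the budget left by the dissipation $-\tfrac{d_{R_2}}{2}\dintt{}|\Delta_\zeta U^\varepsilon|^2$ produced in Lemma \ref{Lem.ConvRateMixterm}. The decisive point, to restate it, is that projecting the equation for $R^\varepsilon-R$ onto $H^2(\Omega)^*$ replaces the rough difference $R^\varepsilon-R$ by the smoother $U^\varepsilon$, so that $\|U^\varepsilon\|_{L^4}$ can absorb one full power of the difference through $H^1\hookrightarrow L^4$, leaving only a single power of $\|\Delta_\zeta U^\varepsilon\|_{L^2}$ that Young's inequality renders with an arbitrarily small constant, thereby taming the quadratic (logistic) term despite the absence of a uniform $L^\infty$ bound on $R^\varepsilon$.
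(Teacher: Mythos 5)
Your proof is correct and follows the same overall strategy as the paper: split $h(R^\varepsilon,S^\varepsilon)-h(R,S)$ into pieces that are linear in the differences $R_i^\varepsilon-\xi_i$ and $R^\varepsilon-R=\Delta_\zeta U^\varepsilon$ plus one genuinely quadratic piece, control $|R_i^\varepsilon-\xi_i|$ via Lemma \ref{Lem.AbsoluteDistance}, exploit the extra two derivatives of $U^\varepsilon$ through a two-dimensional Sobolev embedding to tame the quadratic piece, and tune all Young parameters so that the coefficient of $\dintt{}|\Delta_\zeta U^\varepsilon|^2$ stays within the $d_{R_2}/4$ budget. The only substantive difference is the mechanics of the quadratic term: the paper keeps the factorization $|h(R^\varepsilon,S^\varepsilon)-h(R,S)|\le C_T(1+R^\varepsilon+R)\sum_i|R_i^\varepsilon-\xi_i|$, isolates $\dintt{}|U^\varepsilon\Delta_\zeta U^\varepsilon|^2$, and bounds it by $\|\Delta_\zeta U^\varepsilon\|_{L^\infty(0,T;L^2(\Omega))}^2\int_0^t\|U^\varepsilon\|_{L^\infty(\Omega)}^2$ together with a Gagliardo--Nirenberg estimate $\|U^\varepsilon\|_{L^\infty(\Omega)}^2\le C_{\beta_2}\|U^\varepsilon\|_{H^1(\Omega)}^2+\beta_2\|\Delta_\zeta U^\varepsilon\|_{L^2(\Omega)}^2$, whereas you use the exact algebraic identity for $h(R^\varepsilon,S^\varepsilon)-h(R,S)$ (which is cleaner than the paper's factorization) and the H\"older splitting $\|U^\varepsilon\|_{L^4}\|R_i^\varepsilon-\xi_i\|_{L^2}\|R^\varepsilon\|_{L^4}$ with $H^1(\Omega)\hookrightarrow L^4(\Omega)$ and the uniform $L^\infty(0,T;L^4(\Omega))$ bound on $R^\varepsilon$ from the energy estimates. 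Both routes rest on the same inputs (the $L^\infty(0,T;L^p(\Omega))$ bounds on $R_1^\varepsilon,R_2^\varepsilon$ coming from Lemma \ref{Lem.Feedback} and the $L^\infty(\Omega_T)$ bound on $R$ from Lemma \ref{Lem.LInfinityEstimate}); yours avoids passing through $\|U^\varepsilon\|_{L^\infty(\Omega)}$ and is marginally more economical, while the paper's version makes the needed $H^2$-level regularity of $U^\varepsilon$ more explicit. No gap either way.
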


\begin{proof} By employing the notations in Lemma \ref{Lem.UXY}, we have 
\begin{align*}
& |h(R^\varepsilon,S^\varepsilon) - h(R,S)| \\
& = \Big| (R_1^\varepsilon - \xi_1) \Big( \gamma_1 \Rhat - \eta_1 - \gamma_1 (R_1^\varepsilon + R) \Big)  + (R_2^\varepsilon - \xi_2) \Big( \gamma_2 \Rhat - \eta_2 - \gamma_2 (R_2^\varepsilon + R) \Big) \Big| \\
& \le \Big( |\gamma_1 \Rhat - \eta_1|+|\gamma_2 \Rhat - \eta_2|  + (\gamma_1+\gamma_2) (R^\varepsilon + 2R) \Big) \Big( |R_1^\varepsilon - \xi_1| + |R_2^\varepsilon - \xi_2| \Big),
\end{align*}
which, thanks to Lemma \ref{Lem.AbsoluteDistance}, implies that  
\begin{align*}
|h(R^\varepsilon,S^\varepsilon) - h(R,S)|  
& \le C_{T} (1 + R^\varepsilon + R)  \Big( |f^\varepsilon R_1^\varepsilon   - g^\varepsilon R_2^\varepsilon| +  |\Delta_\zeta U^\varepsilon| + |S^\varepsilon-S| \Big) \\
& \le C_{T} (1 + R)  \Big( |f^\varepsilon R_1^\varepsilon   - g^\varepsilon R_2^\varepsilon| +  |\Delta_\zeta U^\varepsilon| + |S^\varepsilon-S| \Big)  \\
& + C_{T} |\Delta_\zeta U^\varepsilon|  \Big( |f^\varepsilon R_1^\varepsilon   - g^\varepsilon R_2^\varepsilon| +  |\Delta_\zeta U^\varepsilon| + |S^\varepsilon-S| \Big)    ,  
\end{align*}
where recalling $R^\varepsilon - R = \Delta_\zeta U^\varepsilon$. 
 Hence, for any $\beta_1>0$,
\begin{equation}
\begin{aligned}
& -\dintt{}  U^\varepsilon (h(R^\varepsilon,S^\varepsilon) - h(R,S))   \\
& \le C_T \dintt{} (1 + 3R) |U^\varepsilon | \Big( |f^\varepsilon R_1^\varepsilon   - g^\varepsilon R_2^\varepsilon| +  |\Delta_\zeta U^\varepsilon| + |S^\varepsilon-S| \Big)         \\
& + C_T \dintt{} |U^\varepsilon \Delta_\zeta U^\varepsilon|  \Big( |f^\varepsilon R_1^\varepsilon   - g^\varepsilon R_2^\varepsilon| +  |\Delta_\zeta U^\varepsilon| + |S^\varepsilon-S| \Big) \\
& \le C_T  \dintt{}  \Big( C_{\beta_1} |U^\varepsilon |^2 +  |f^\varepsilon R_1^\varepsilon   - g^\varepsilon R_2^\varepsilon|^2 + \beta_1 |\Delta_\zeta U^\varepsilon|^2  +  |S^\varepsilon-S|^2 \Big)         \\
& + C_T \dintt{} |U^\varepsilon \Delta_\zeta U^\varepsilon|  \Big( |f^\varepsilon R_1^\varepsilon   - g^\varepsilon R_2^\varepsilon| +  |\Delta_\zeta U^\varepsilon| + |S^\varepsilon-S| \Big) 
\end{aligned}
\label{Proof.LemNonTerm1}
\end{equation}
where we have used $R \in L^\infty(\Omega_T)$. 

\medskip

Now, thanks to Lemma \ref{Lem.Feedback}, the energy $E_p^\varepsilon(t)$ is bounded for any $1\le p<\infty$. Since $f(S^\varepsilon)$ and $g(S^\varepsilon)$ have positively lower bounds, from the proof of Lemma \ref{Lem.Feedback}, the components $R_1^\varepsilon$, $R_2^\varepsilon$ are uniformly bounded in $L^\infty(0,T;L^p(\Omega))$ for any $1\le p<\infty$. Therefore, in particular with $p=2$, the term $ U^\varepsilon=\Delta_\zeta^{-1}(R^\varepsilon -R)$ is uniformly bounded in $L^\infty(0,T;H^2(\Omega))$.  We then have 
\begin{align*}
\dintt{} |U^\varepsilon \Delta_\zeta U^\varepsilon|^2 \le \int_0^t \|U^\varepsilon\|_{L^\infty(\Omega)}^2 \|\Delta_\zeta U^\varepsilon\|_{L^2(\Omega)}^2 \le \|\Delta_\zeta U^\varepsilon\|_{L^\infty(0,T;L^2(\Omega))}^2 \int_0^t \|U^\varepsilon\|_{L^\infty(\Omega)}^2. 
\end{align*}
For any $\beta_2>0$,  the Sobolev  and  Gagliardo-Nirenberg inequalities yield  
\begin{align*}
\|U^\varepsilon\|_{L^\infty(\Omega)}^2 & \le C \left( \|U^\varepsilon\|_{L^{4}(\Omega)}^2 + \|\nabla U^\varepsilon\|_{L^{4}(\Omega)}^2 \right) \\
& \le C_{\beta_2,\zeta} \|U^\varepsilon\|_{L^{2}(\Omega)}^2 + C_{\beta_2} \|\nabla U^\varepsilon\|_{L^{2}(\Omega)}^2 + \beta_2 \|\Delta_\zeta U^\varepsilon\|_{L^{2}(\Omega)}^2 ,
\end{align*}
which consequently shows 
\begin{align*}
\dintt{} |U^\varepsilon \Delta_\zeta U^\varepsilon|^2    \le C_{T,\beta_2,\zeta} \dintt{} \left( |U^\varepsilon|^2 + |\nabla U^\varepsilon|^2 \right) + \beta_2 C_T \dintt{} |\Delta_\zeta U^\varepsilon|^2 . 
\end{align*}
Taking into account this estimate, for any $\beta_3>0$ we arrive at 
\begin{equation}
\begin{aligned}
& \dintt{} |U^\varepsilon \Delta_\zeta U^\varepsilon|  \Big( |f^\varepsilon R_1^\varepsilon   - g^\varepsilon R_2^\varepsilon| +  |\Delta_\zeta U^\varepsilon| + |S^\varepsilon-S| \Big) \\
& \le C_{\beta_3} \dintt{} |U^\varepsilon \Delta_\zeta U^\varepsilon|^2 + \dintt{} \Big( |f^\varepsilon R_1^\varepsilon   - g^\varepsilon R_2^\varepsilon|^2 + \beta_3 |\Delta_\zeta U^\varepsilon| + |S^\varepsilon-S|^2 \Big) \\
& \le C_{T,\beta_2,\beta_3,\zeta} \dintt{} \left( |U^\varepsilon|^2 + |\nabla U^\varepsilon|^2 \right) + (\beta_2C_{\beta_3} + \beta_3) C_T \dintt{} |\Delta_\zeta U^\varepsilon|^2 . 
\end{aligned}
\label{Proof.LemNonTerm2}
\end{equation}
Finally, a combination of \eqref{Proof.LemNonTerm1}-\eqref{Proof.LemNonTerm2} deduces 
\begin{align*}
& \dintt{}  U^\varepsilon (h(R^\varepsilon,S^\varepsilon) - h(R,S))   \\
& \le C_T  \dintt{}  \Big( C_{\beta_1} |U^\varepsilon |^2 +  |f^\varepsilon R_1^\varepsilon   - g^\varepsilon R_2^\varepsilon|^2 + \beta_1 |\Delta_\zeta U^\varepsilon|^2  +  |S^\varepsilon-S|^2 \Big)         \\
& + C_{T,\beta_2,\beta_3} \dintt{} \left( |U^\varepsilon|^2 + |\nabla U^\varepsilon|^2 \right) + (\beta_2C_{\beta_3} + \beta_3) C_T \dintt{} |\Delta_\zeta U^\varepsilon|^2 ,
\end{align*} 
which directly implies the desired estimate by choosing $\beta_1,\beta_3$ and then $\beta_2$ small enough. 
\end{proof}
 
We proceed to prove Theorem \ref{Theo.ConRate}.

\begin{proof}[Proof of Theorem \ref{Theo.ConRate}] To prove this theorem, we will combine Lemmas \ref{Lem.ConvRateMixterm}, \ref{Lem.NonTerm}, and then apply the Gr\"onwall lemma.  First,
multiplying two sides of \eqref{Equation.Projection} by $-\Delta_\zeta U^\varepsilon$ gives 
\begin{equation}
\begin{aligned}
& \frac{1}{2} \frac{d}{dt} \dintt{} (\zeta |U^\varepsilon|^2 + |\nabla U^\varepsilon|^2)  \\
& \hspace{2cm} = - \dintt{}  \Delta_\zeta U^\varepsilon (I+\zeta \Delta_\zeta^{-1}) \Delta_\zeta X^\varepsilon  - \dintt{}  U^\varepsilon \Delta_\zeta Y^\varepsilon  \\
& \hspace{2cm} = -   \dintt{}  \Delta_\zeta U^\varepsilon (I+\zeta \Delta_\zeta^{-1}) \Delta_\zeta X^\varepsilon   -    \dintt{}  U^\varepsilon (h(R^\varepsilon,S^\varepsilon) - h(R,S)) \\
&\hspace{2cm} \le C_T \dintt{} (\zeta |U^\varepsilon|^2 + |\nabla U^\varepsilon|^2) - \frac{d_{R_2}}{4} \dintt{} |\Delta_\zeta U^\varepsilon|^2 \\
& \hspace{2cm} + C_T  \dintt{}    |f^\varepsilon R_1^\varepsilon   - g^\varepsilon R_2^\varepsilon|^2   + C_T  \dintt{} |S^\varepsilon-S|^2 ,   
\end{aligned}
\label{Proof.TheoConRate1}
\end{equation}
where the last estimate comes from applications of Lemmas \ref{Lem.ConvRateMixterm} and \ref{Lem.NonTerm}. Let us estimate the difference $V^\varepsilon= S^\varepsilon-S$ in $L^2(\Omega_t)$, which is the solution to 
\begin{align*}
\pa_t V^\varepsilon - d_S\Delta V^\varepsilon = \displaystyle   \mu \eta_1(R_1^\varepsilon -\xi_1) +  \mu  \eta_2 (R_2^\varepsilon-\xi_2)   -\rho V^\varepsilon.
\end{align*}
Indeed, by the estimate for $|R_1^\varepsilon-\xi_1$, $|R_2^\varepsilon-\xi_2|$ given by Lemma \ref{Lem.AbsoluteDistance}, we have   
\begin{equation}
\begin{aligned}
& \frac{1}{2} \frac{d}{dt} \dintt{}  (S^\varepsilon-S)^2 + \rho \dintt{} |S^\varepsilon-S|^2 + d_S\dintt{} |\nabla (S^\varepsilon -S)|^2 \\
 &=  \dintt{} \Big( \mu \eta_1|R_1^\varepsilon -\xi_1|  +  \mu  \eta_2 |R_2^\varepsilon-\xi_2|   \Big)|S^\varepsilon-S|    \\
&\le C_T \dintt{}  \Big( |f^\varepsilon R_1^\varepsilon   - g^\varepsilon R_2^\varepsilon| +  |\Delta_\zeta U^\varepsilon| + |S^\varepsilon-S| \Big) |S^\varepsilon-S| \\
&\le C_T \dintt{}  \Big( |f^\varepsilon R_1^\varepsilon   - g^\varepsilon R_2^\varepsilon|^2 + \beta_4 |\Delta_\zeta U^\varepsilon|^2   + C_{\beta_4} |S^\varepsilon-S|^2 \Big),   
\end{aligned}
\label{Proof.TheoConRate2}
\end{equation}
for any $\beta_4>0$. Note that $U^\varepsilon(0)=0$ since $R^\varepsilon$, $R$ have the same initial data, which also says that $\nabla U^\varepsilon(0)=0$. Hence, a combination of  \eqref{Proof.TheoConRate1} and \eqref{Proof.TheoConRate2} accordingly gives 
\begin{align*}
& \int_\Omega ( |U^\varepsilon|^2 + |\nabla U^\varepsilon|^2 ) + \int_\Omega (S^\varepsilon-S)^2  +   \dintt{} |\Delta_\zeta U^\varepsilon|^2 + \dintt{} |\nabla (S^\varepsilon -S)|^2 \\
& \le C_T \left( \dintt{} ( |U^\varepsilon|^2 + |\nabla U^\varepsilon|^2)  + \dintt{} |S^\varepsilon-S|^2  +   \dintt{}    |f^\varepsilon R_1^\varepsilon   - g^\varepsilon R_2^\varepsilon|^2 \right),    
\end{align*}  
where we have chosen $\beta_4$ small enough.  Hence, by the Gr\"onwall inequality,
\begin{align*}
& \int_\Omega ( |U^\varepsilon|^2  + \int_\Omega (S^\varepsilon-S)^2  + |\nabla U^\varepsilon|^2 ) \le   C_T  \dintt{}    |f^\varepsilon R_1^\varepsilon   - g^\varepsilon R_2^\varepsilon|^2.   
\end{align*}
Consequently, 
\begin{equation}
\begin{aligned}
& \dintt{} |\Delta_\zeta U^\varepsilon|^2  + \int_\Omega (S^\varepsilon-S)^2   \le   C_T  \dintt{}    |f^\varepsilon R_1^\varepsilon   - g^\varepsilon R_2^\varepsilon|^2,    
\end{aligned}
\label{Proof.TheoConRate3}
\end{equation}
which shows that  
\begin{align*}
& \|R^\varepsilon-R\|_{L^2(\Omega_T)}  + \|S^\varepsilon-S\|_{L^\infty(0,T;L^2(\Omega))}  \le   C_T \|f^\varepsilon R_1^\varepsilon   - g^\varepsilon R_2^\varepsilon\|_{L^2(\Omega_T)} ,   
\end{align*} 
where we recast $\Delta_\zeta U^\varepsilon$ by $R^\varepsilon-R$. Eventually, by the heat regularisation (as given in Lemma \ref{Lem.HeatRegularisation})   applied to the equation for $V^\varepsilon$, 
\begin{align*}
\|S^\varepsilon-S\|_{W^{2,1}_2(\Omega_T)} 
& \le C_T \Big(  \|R_1^\varepsilon -\xi_1\|_{L^2(\Omega)} + \|R_2^\varepsilon-\xi_2 \|_{L^2(\Omega)} \Big) \le C_T \|f^\varepsilon R_1^\varepsilon   - g^\varepsilon R_2^\varepsilon\|_{L^2(\Omega_T)}, 
\end{align*}
where we combined Lemma \ref{Lem.AbsoluteDistance}  and the estimate \eqref{Proof.TheoConRate3}  at the second estimate. The theorem is proved by taking all the above estimates with the convergence to the critical manifold (Lemma \ref{Lem.Bootstrap}). 
\end{proof} 

\section{Numerical implementation}\label{sec5}
We aim to implement some numerical examples to illustrate the proposed analysis, where we focus on the convergence rate of the fast reaction limit passing from the system \eqref{System.OrginalSys}-\eqref{Condition.Initial} to  \eqref{System.LimitingSys}-\eqref{Condition.LimInitialCond} as given in Theorem \ref{Theo.ConRate}. The parabolic systems are solved by using the MATLAB\textsuperscript{\textregistered} PDE solver ``pdepe".  
To facilitate calculations, we consider our problems in the one-dimensional domain $\Omega=(0,L)$ and the following initial data 
\begin{align*}
    (R_{10},R_{20},S_0)=(1+\sin(10x), 1+\cos(20x), 1+\cos(x)),
\end{align*}
for $x\in (0,L)$. 
The systems \eqref{System.OrginalSys}-\eqref{Condition.Initial} and  \eqref{System.LimitingSys}-\eqref{Condition.LimInitialCond} are now considered in one dimension and numerically solved  corresponding to the fixed parameters given in Table  
\ref{tab:parameters}.

\begin{table}[h]
    \centering
    \begin{tabular}{|c|c|c|} 
    \hline
        $d_{R_1}$ = 0.0100 &  $\gamma_i$ = 1.0 &  $\mu$ = 5.0    \\
        \hline
         $d_{R_2}$ = 0.0021  & $\eta_i$ = 1.0  & $\widehat{R}$ = 1.0      \\
         \hline
        $d_{S}$ = 0.0040 &  $L$ = 1.0 &  $\rho$ = 0.2           \\
        \hline
    \end{tabular}
    \caption{Parameters employed in numerical implementation ($i=1,2$). }
    \label{tab:parameters}
\end{table}


 Figure \ref{fig:evolution} represents the evolution of the spatial norms $\|R^\varepsilon(t)-R(t)\|_{L^2(\Omega)}$ and $\|S^\varepsilon(t)-S(t)\|_{L^2(\Omega)}$ as $\varepsilon=10^{-k}$, $-3\le k\le 0$. After a short time, these norms rapidly become bigger for $k=0,-1$, but keep small for $k=-2,-3$. The first norm gets small quickly, while the second one needs a much longer time to get smaller, i.e. the density of the toxicity $S^\varepsilon$ needs a much longer time converging to its limit as $\varepsilon \to 0$ and mainly contributes to the rate of the fast reaction limit passing from \eqref{System.OrginalSys}-\eqref{Condition.Initial} to  \eqref{System.LimitingSys}-\eqref{Condition.LimInitialCond}.

 \begin{figure}[H]
    \centering
    \subfigure[$\varepsilon=10^0$] 
{\includegraphics[width=0.45\textwidth]{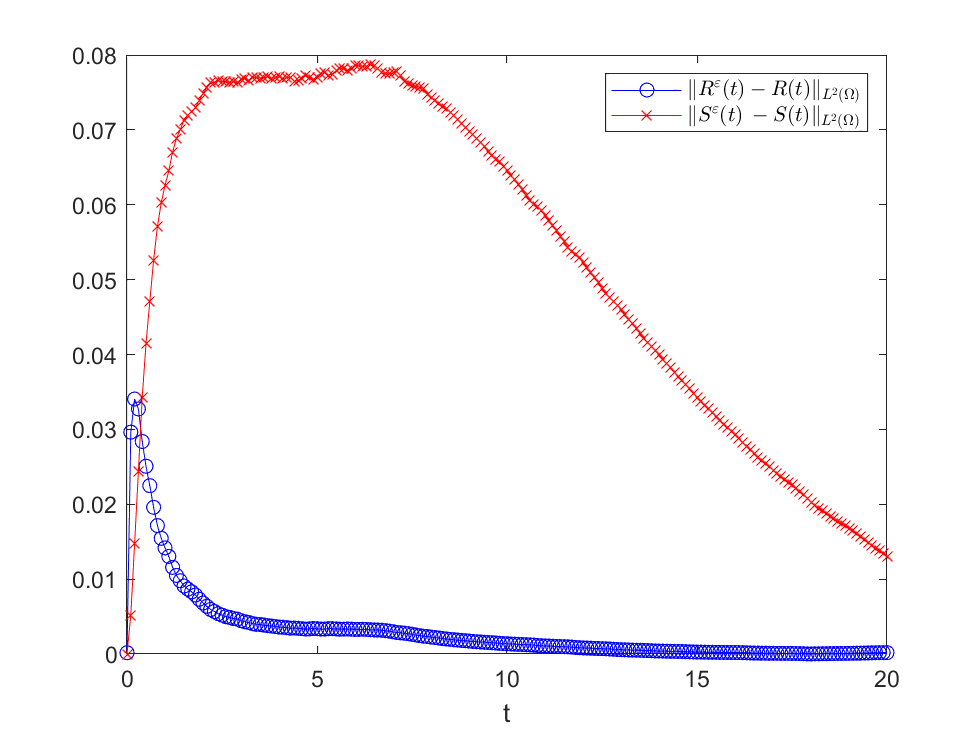} 
        \label{fig:subfig1}
    }
    \hfill
    \subfigure[$\varepsilon=10^{-1}$]{
        \includegraphics[width=0.45\textwidth]{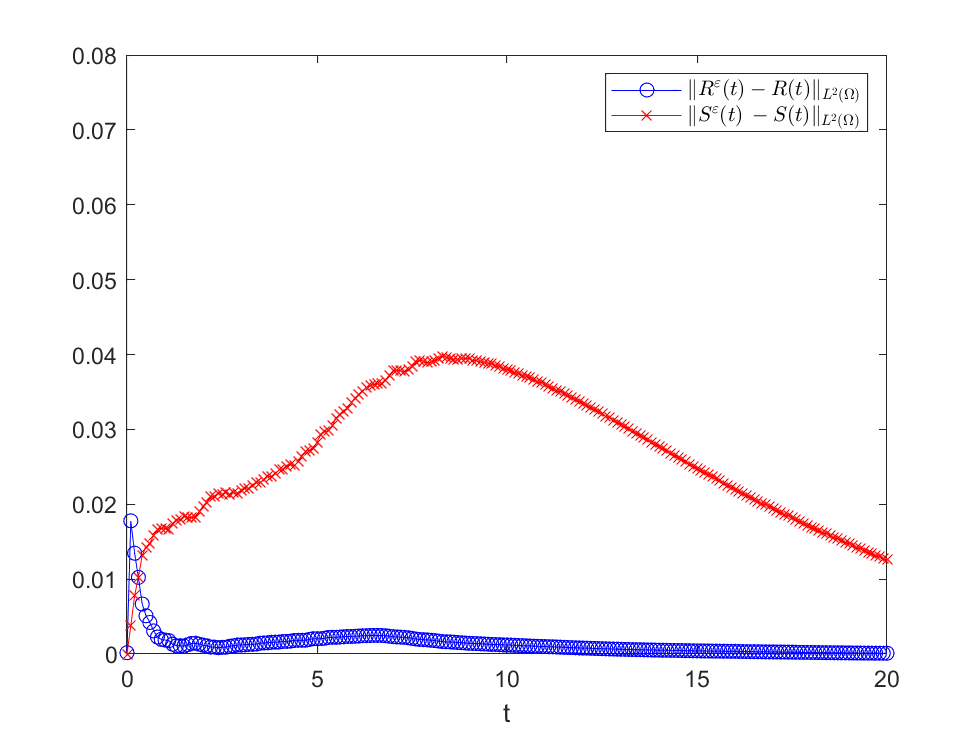} 
        \label{fig:subfig2}
    }
    \subfigure[$\varepsilon=10^{-2}$] 
{\includegraphics[width=0.45\textwidth]{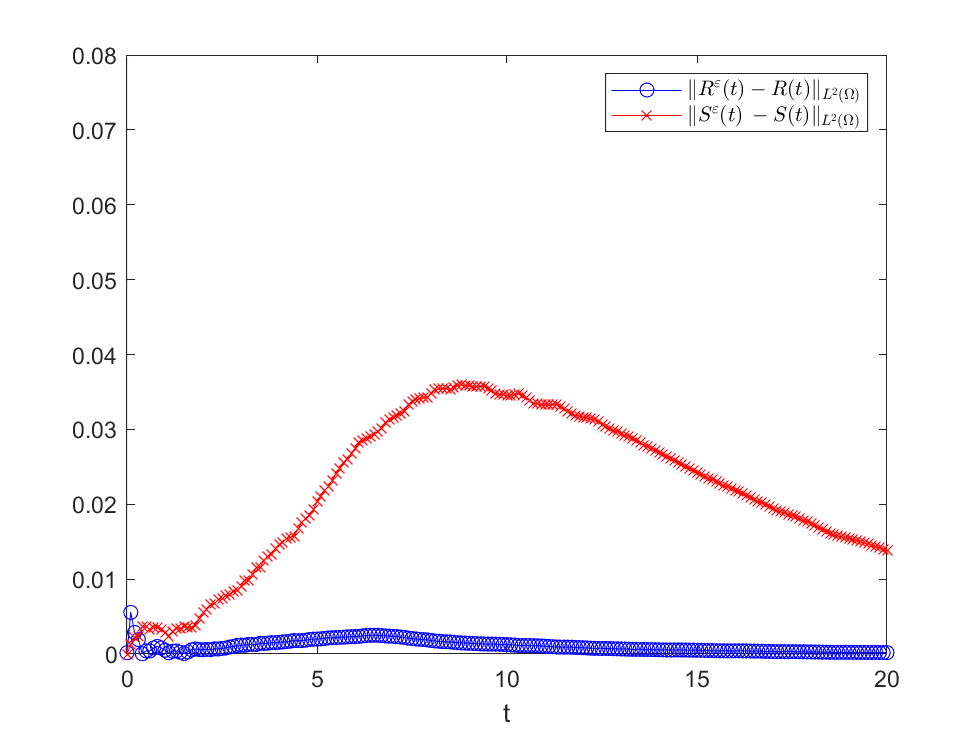} 
        \label{fig:subfig1}
    }
    \hfill
    \subfigure[$\varepsilon=10^{-3}$]{
\includegraphics[width=0.45\textwidth]{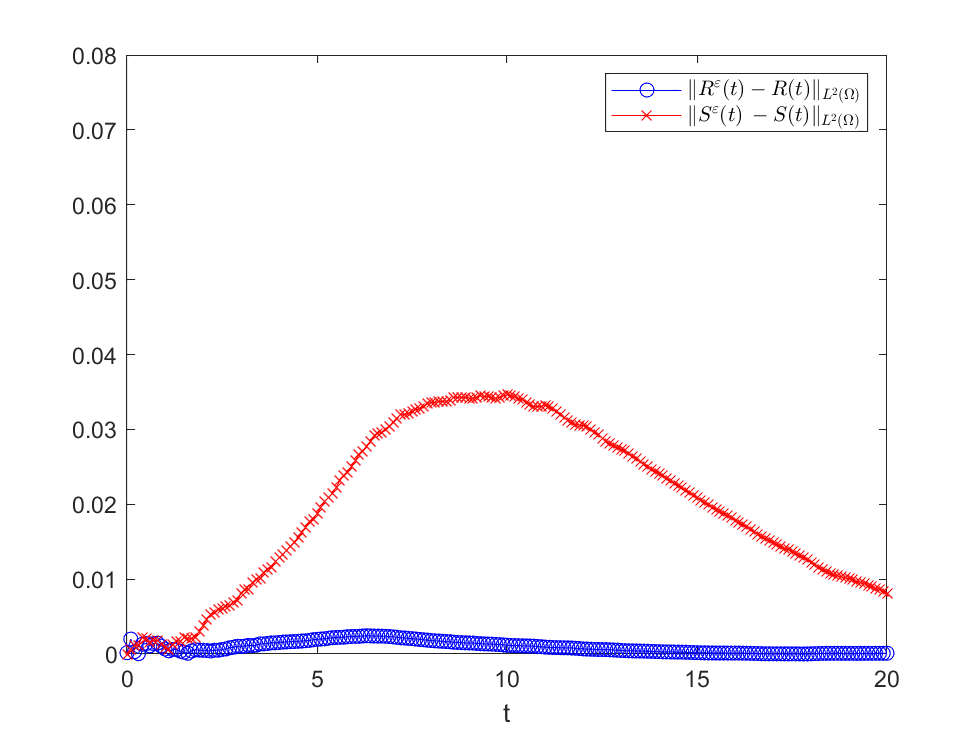} 
        \label{fig:subfig2}
    }
    \caption{The evolution of $\|R^\varepsilon(t)-R(t)\|_{L^2(\Omega)}$ and $\|S^\varepsilon(t)-S(t)\|_{L^2(\Omega)}$.}
    \label{fig:evolution}
\end{figure}

\vspace*{-0.3cm}

\begin{figure}[H]
    \centering
    \includegraphics[width=0.65\textwidth]{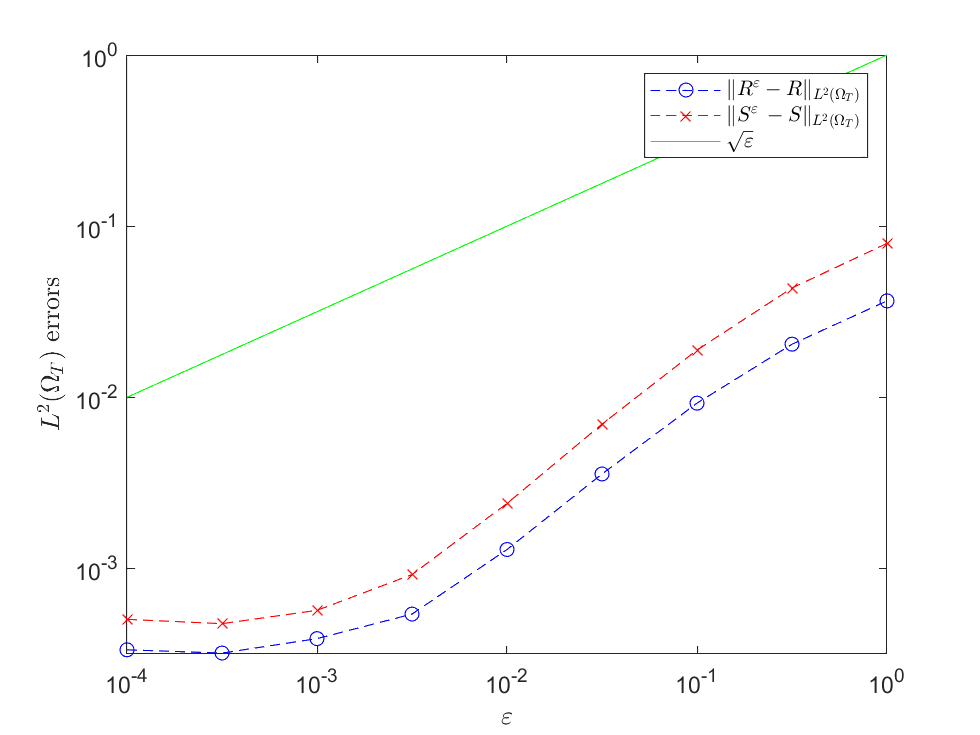} 
    \caption{The convergence rates in $L^2(\Omega_T)$ of the fast reaction limit passing from  \eqref{System.OrginalSys}-\eqref{Condition.Initial} to  \eqref{System.LimitingSys}-\eqref{Condition.LimInitialCond} for $\varepsilon=10^{-k/2}$, $k=0,1,\dots,8$.}
    \label{fig:rate}
\end{figure}

Figure \ref{fig:rate} represents the convergence rate of the fast reaction limit considered in Theorem \ref{Theo.ConRate}, i.e. the rates $\log(\|R^\varepsilon-R\|_{L^2(\Omega_T)})/\log 10$ and $\log(\|S^\varepsilon-S\|_{L^2(\Omega_T)})/\log 10$. 
As expected, the $L^2(\Omega_T)$-errors behave with the order of magnitude of $\sqrt{\varepsilon}$, which fits the results given in Theorem \ref{Theo.ConRate}. The numerical solver works until $\varepsilon =  10^{-7/2}$. For smaller values of $\varepsilon$, the figure is not meaningful since the  $L^2(\Omega_T)$-errors do not behave as $\sqrt{\varepsilon}$ any more, which may be caused by the numerical errors and the highly nonlinear structure of the system. 

\medskip

\noindent{\textbf{Acknowledgement.}} 
The third and last authors are funded by the FWF project ``Quasi-steady-state approximation for PDE", number I-5213.

\newcommand{\etalchar}[1]{$^{#1}$}
 \newcommand{\noop}[1]{}

\end{document}